\documentclass[draft,reqno]{amsart}

\usepackage[Symbol]{upgreek}

\usepackage{amssymb}
\usepackage{epic}
\usepackage{mathrsfs}
\usepackage{accents}
\usepackage{stmaryrd}






\newcommand{\bbold}{\mathbb}

\def\R { {\bbold R} }
\def\Q { {\bbold Q} }

\def\N { {\bbold N} }
\def\T { {\bbold T} }

\def \I{\operatorname{I}}
\def \J{\bbold{J}}

\def \ex{\operatorname{e}}

\renewcommand\epsilon{\varepsilon}

\def \<{\langle}
\def \>{\rangle}

\def \supp {\operatorname{supp}}

\def \((  {(\!(}
\def \)) {)\!)}

\def \nate{\operatorname{e}}

\DeclareMathSymbol{\precequ}{\mathrel}{symbols}{"16}
\DeclareMathSymbol{\succequ}{\mathrel}{symbols}{"17}

\newtheorem{theorem}{Theorem}[section]
\newtheorem{lemma}[theorem]{Lemma}
\newtheorem{prop}[theorem]{Proposition}
\newtheorem{cor}[theorem]{Corollary}
\newtheorem{theoremintro}{Theorem}

\theoremstyle{definition}

\theoremstyle{remark}

\newtheorem*{examples}{Examples}

\def \card {\operatorname{card}}

\def \fA {{\mathfrak A}}
\def \fM {{\mathfrak M}}

\def \fm {{\mathfrak m}}

\def \No{\text{{\bf No}}}
\def \On{\text{{\bf On}}}
\def \BM{\operatorname{BM}}

\let\oldi\i
\let\oldj\j
\renewcommand\i{\relax\ifmmode{\boldsymbol{i}}\else\oldi\fi}
\renewcommand\j{\relax\ifmmode{\boldsymbol{j}}\else\oldj\fi}

\renewcommand\leq{\leqslant}
\renewcommand\geq{\geqslant}
\renewcommand\preceq{\preccurlyeq}

\renewcommand\le{\leq}
\renewcommand\ge{\geq}
\renewcommand\frak{\mathfrak}

\DeclareMathAlphabet{\mathbf}{OML}{cmm}{b}{it}

\DeclareFontFamily{U}{fsy}{}
\DeclareFontShape{U}{fsy}{m}{n}{<->s*[.9]psyr}{}
\DeclareSymbolFont{der@m}{U}{fsy}{m}{n}
\DeclareMathSymbol{\der}{\mathord}{der@m}{182}

\DeclareSymbolFont{der@m}{U}{fsy}{m}{n}
\DeclareMathSymbol{\derdelta}{\mathord}{der@m}{100}




\DeclareSymbolFont{imag@m}{OT1}{cmr}{m}{ui}
\DeclareMathSymbol{\imag}{\mathord}{imag@m}{105}


\DeclareFontFamily{OMS}{smallo}{}
\DeclareFontShape{OMS}{smallo}{m}{n}{<->s*[.65]cmsy10}{}
\DeclareSymbolFont{smallo@m}{OMS}{smallo}{m}{n}
\DeclareMathSymbol{\smallo}{\mathord}{smallo@m}{79}

\DeclareFontFamily{OMS}{largerdot}{}
\DeclareFontShape{OMS}{largerdot}{m}{n}{<->s*[.8]cmsy10}{}
\DeclareSymbolFont{largerdot@m}{OMS}{largerdot}{m}{n}
\DeclareMathSymbol{\largerdot}{\mathord}{largerdot@m}{15}


\DeclareMathSymbol{\llambda}{\mathord}{der@m}{108}
\DeclareMathSymbol{\rrho}{\mathord}{der@m}{114}




\def \Upl{\Uplambda}
\def \upo{\upomega}
\def \Upo{\Upomega}

\newcommand{\equationqed}[1]{\[\pushQED{\qed}#1 \qedhere\popQED\]\let\qed\relax}
\newcommand{\alignqed}[1]{\begin{align*}\pushQED{\qed} #1 \qedhere\popQED\end{align*}\let\qed\relax}

\makeatletter
\newcommand{\dminus}{\mathbin{\text{\@dminus}}}

\newcommand{\@dminus}{%
  \ooalign{\hidewidth\raise1ex\hbox{\bf.}\hidewidth\cr$\m@th-$\cr}%
}
\makeatother

\begin{document}

\title{The Surreal Numbers as a Universal $H$-field}

\author[Aschenbrenner]{Matthias Aschenbrenner}
\address{Department of Mathematics\\
University of California, Los Angeles\\
Los Angeles, CA 90095\\
U.S.A.}
\email{matthias@math.ucla.edu}

\author[van den Dries]{Lou van den Dries}
\address{Department of Mathematics\\
University of Illinois at Urbana-Cham\-paign\\
Urbana, IL 61801\\
U.S.A.}
\email{vddries@math.uiuc.edu}

\author[van der Hoeven]{Joris van der Hoeven}
\address{\'Ecole Polytechnique\\
91128 Palaiseau Cedex\\
France}
\email{vdhoeven@lix.polytechnique.fr}

\begin{abstract} We show that the natural embedding of the differential field
of transseries into Conway's field of surreal numbers with the  
Berarducci-Mantova derivation is an elementary embedding. We also prove 
that any Hardy field embeds into the field of surreals with the Berarducci-Mantova derivation. 
\end{abstract}

\date{August 2016}

\maketitle

\section*{Introduction}

\noindent
Berarducci and Mantova~\cite[Theorem B]{BM} have recently 
constructed a derivation (denoted
by $\der_{\BM}$ below) on Conway's ordered field $\No$ of surreal numbers that makes the latter a Liouville closed $H$-field with constant field~$\R$. The standard example of such an object is the ordered differential field~$\T$ of transseries, and the question arises whether $\No$ with $\der_{\BM}$ is elementarily equivalent to $\T$. Below we give a positive answer in a stronger form: Theorem~\ref{BM1}.
Throughout this paper we consider $\No$ as a
differential field with derivation $\der_{\BM}$.

Both $\No$ and $\T$ are also exponential fields; the exponential function~$\exp$ on $\No$ is
defined in Gonshor~\cite{G}. We refer to \cite[Appendix A]{ADH}
for the precise construction of $\T$, but the ``generating element'' $x$ of $\T$ there will be denoted by $x_{\T}$ here, since we prefer to have $x$ range here over arbitrary surreal numbers. It is folklore (but see Section~\ref{sec:emb}
for a proof) 
that there is a unique embedding $\iota\colon \T\to \No$
of ordered exponential fields with $\iota(x_{\T})=\omega$ that is the identity on $\R$ and respects infinite sums. It follows easily from Wilkie's theorem
~\cite{W} and other known facts that $\iota$ is an elementary embedding of ordered exponential fields; see Section~\ref{sec:emb} for details. 
The analogue for the derivation instead of the exponentiation 
requires more effort:

\begin{theoremintro}\label{BM1} The mapping $\iota\colon \T \to \No$ is an elementary embedding of ordered differential fields.
\end{theoremintro}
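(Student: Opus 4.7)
The natural strategy is to invoke the model-completeness theorem of \cite{ADH} for the theory $T^{\mathrm{nl}}_{\mathrm{small}}$ of $\omega$-free, newtonian, Liouville closed $H$-fields with small derivation. Since $\T$ is a model of this theory, every embedding of $\T$ into another model of $T^{\mathrm{nl}}_{\mathrm{small}}$ is automatically elementary. Hence the proof reduces to two tasks: (a)~show that $\iota$ is an embedding of ordered differential fields (the introduction only gives $\iota$ as an ordered exponential field embedding); (b)~show that $(\No,\der_{\BM})$ is a model of $T^{\mathrm{nl}}_{\mathrm{small}}$.

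For (a), I would exploit the characterization of $\iota$ as the unique sum-respecting ordered exponential embedding with $\iota(x_{\T})=\omega$. The derivation on $\T$ is the unique strongly additive $\R$-linear derivation sending $x_{\T}\mapsto 1$ and compatible with $\exp$ and $\log$; on the $\No$-side, $\der_{\BM}$ is strongly additive on the relevant Hahn-style subfield containing $\iota(\T)$, satisfies $\der_{\BM}(\omega)=1$, and is compatible with Gonshor's $\exp$ via the Leibniz rule. These identifications should force $\der_{\BM}\circ\iota=\iota\circ\partial_{\T}$. For (b), \cite{BM} already gives that $(\No,\der_{\BM})$ is a Liouville closed $H$-field with constant field $\R$, and smallness of the derivation is a routine check. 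The two substantial properties to verify are $\omega$-freeness and newtonianity.

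For $\omega$-freeness, I would analyze the characteristic sequence $(\omega_n)$ attached to an arbitrary element of $\No$ and exploit the vast asymptotic hierarchy of $\No$, generated by transfinite iterates of $\exp$ and $\log$, to show that any such sequence can be further flattened. The main obstacle is newtonianity: showing that every differential polynomial of positive Newton degree over $(\No,\der_{\BM})$ has a zero in the maximal ideal. This requires a delicate interplay between the Conway/Gonshor simplicity hierarchy of $\No$ and the asymptotic hierarchy induced by $\der_{\BM}$, together with a ``surreal realization'' step that upgrades the formal asymptotic solutions predicted by Newton polynomial analysis into actual surreal zeros. Carrying this out will likely demand new surreal-specific tools built on top of the Hahn-style nested truncation machinery that underlies the BM derivation, and is the heart of the matter.
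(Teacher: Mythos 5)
Your overall reduction is the right one and matches the paper's: check that $\iota$ is an embedding of ordered differential fields (which the paper also dispatches quickly, via the uniqueness/strong-additivity characterization of $\iota$ and $\der_{\BM}(\omega)=1$), verify that $(\No,\der_{\BM})$ is a model of $T^{\text{nl}}_{\text{small}}$, and then invoke model completeness of that theory to upgrade the embedding to an elementary one. You have also correctly isolated where the real work lies: $\upo$-freeness and newtonianity of $\No$.

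But at exactly that point your proposal has a genuine gap rather than a proof. You propose to establish newtonianity by a direct Newton-polygon analysis plus a ``surreal realization'' step, and you yourself concede this ``will likely demand new surreal-specific tools'' --- in other words, you do not have an argument. The paper avoids any such direct attack by using a reduction already available in \cite{ADH} (Corollary~11.7.15 and Theorem~15.0.1): if an $H$-field $K$ satisfies $\der K=K$ and is a directed union of spherically complete \emph{grounded} $H$-subfields, then $K$ is automatically $\upo$-free and newtonian. The entire substance of the paper's proof is then the construction of such subfields: for each $\epsilon$-number $\epsilon$ one takes $I_\epsilon=\No(\epsilon)\cup\{-\epsilon\}$, $\Gamma_\epsilon=\R[[\omega^{I_\epsilon}]]$, and $K_\epsilon=\R[[\omega^{\Gamma_\epsilon}]]$, and shows that $K_\epsilon$ is spherically complete, closed under $\der_{\BM}$ (via the length bound $l(h(y))\le\omega^{l(y)+1}$, the closure criteria of Lemmas~\ref{2} and~\ref{7}, and the explicit formula for $\der_{\fA}(\log_\epsilon\omega)$), and grounded because $I_\epsilon$ has the least element $-\epsilon$, so $\Gamma_\epsilon$ has a smallest nontrivial archimedean class (Lemma~\ref{grarch}). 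Since $\No=\bigcup_\epsilon K_\epsilon$, the ADH criterion applies and also yields $\der_{\BM}(\No)=\No$. Without this directed-union device (or a genuinely new substitute for it), your step (b) does not go through, and the theorem is not proved.
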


\noindent
This answers a question posed in \cite{BM}. The main tools
for proving this result come from \cite[Theorems~15.0.1 and~16.0.1]{ADH}. These tools enable us to reduce the proof
of Theorem~\ref{BM1} to exhibiting 
$\No$ as a directed union
of subfields $\R[[\omega^\Gamma]]$ that are closed under 
$\der_{\BM}$ and where $\Gamma$ is an ordered additive subgroup of~$\No$ having a smallest nontrivial archimedean class; exhibiting $\No$ as 
such a directed union makes up an important part of our paper.
(As a byproduct we get a new proof that $\der_{\BM}(\No)=\No$.) We use the same kind of reduction to obtain:
  
\begin{theoremintro}\label{BM2} The surreals of countable length form a subfield of $\No$ closed under~$\der_{\BM}$. As a differential subfield of $\No$ it is an elementary submodel of~$\No$.
\end{theoremintro}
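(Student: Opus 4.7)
The plan is to adapt the strategy used for Theorem~\ref{BM1} to the subfield $\No_{<\omega_1}$ consisting of the surreals of countable length.

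First I would show that $\No_{<\omega_1}$ is a subfield of $\No$ closed under $\der_{\BM}$. Subfieldhood is classical: Conway's recursive definitions of sum, product, and reciprocal produce outputs whose length is bounded in terms of the input lengths by a function that stays below any uncountable regular cardinal, so $\No_{<\omega_1}$ is closed under the ordered field operations. Closure under $\der_{\BM}$ requires inspecting the Berarducci--Mantova definition: $\der_{\BM}(x)$ is an infinite sum indexed by paths through the tree of nested truncations of~$x$, each summand a product involving reals, values of Gonshor's $\omega$-map on certain subterms, and elementary derivatives. If $x$ has countable length, then its support is countable, the induced path set is countable, and the $\omega$-map preserves countable length; so every summand, and hence the whole sum, lies in $\No_{<\omega_1}$.

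Next I would exhibit $\No_{<\omega_1}$ as a directed union of subfields of the form $\R[[\omega^\Gamma]]$ (with the support restriction needed to stay inside $\No_{<\omega_1}$), each closed under $\der_{\BM}$, where $\Gamma$ is an ordered additive subgroup of $\No_{<\omega_1}$ admitting a smallest nontrivial archimedean class. This mirrors the directed-union representation of $\No$ built earlier for Theorem~\ref{BM1}. The construction for $\No$ proceeds by closing off a given finite tuple under a countable list of operations needed to secure the archimedean-class property and BM-closure; each such operation preserves countable length, so starting with tuples from $\No_{<\omega_1}$ one never leaves $\No_{<\omega_1}$, and the resulting $\Gamma$ is a countable subgroup of $\No_{<\omega_1}$. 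With $\No_{<\omega_1}$ so presented, the same model-theoretic reduction via \cite[Theorems~15.0.1 and~16.0.1]{ADH} that powers Theorem~\ref{BM1} yields $\No_{<\omega_1}\preceq\No$ as ordered differential fields: each $\R[[\omega^\Gamma]]$-stage is an elementary substructure of its ambient stage inside $\No$, and elementarity transfers through the directed unions on either side.

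The main obstacle is the coordination of the first two steps: verifying that the canonical closing-off procedure for building an appropriate $\Gamma$ around a finite tuple never drives one out of $\No_{<\omega_1}$, which in turn rests on the cardinality bookkeeping for the BM-derivative and the $\omega$-map. Once this is secured, the elementarity conclusion is essentially a reapplication of the framework already deployed for Theorem~\ref{BM1}.
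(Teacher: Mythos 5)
Your overall architecture---realize the surreals of countable length as a directed union of spherically complete grounded $H$-subfields closed under $\der_{\BM}$ and then invoke the model completeness of $T_{\text{small}}^{\text{nl}}$---is the paper's, and in fact the paper simply reuses the fields $K_{\epsilon}=\R[[\omega^{\Gamma_{\epsilon}}]]$ already built in Section~\ref{sec:dirun}, restricted to countable $\epsilon$-numbers, rather than any closing-off procedure on finite tuples. (Also note that the individual stages are grounded, hence not Liouville closed, hence not models of $T^{\text{nl}}$; elementarity does not pass ``stage by stage'' but comes from applying model completeness to the two limits $\No(\omega_1)$ and $\No$.) However, both of your cardinality claims have genuine gaps. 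For closure of $\No(\omega_1)$ under $\der_{\BM}$: the path set $\mathcal{P}(x)$ of a surreal of countable length is the branch set of a countably-branching tree of height $\omega$ and can have cardinality $2^{\aleph_0}$, so ``the induced path set is countable'' is false as stated; at best the paths with nonzero path derivative (those eventually entering $\fA$) form a countable set, and even granting that, one must still bound the length of the resulting infinite sum, whose support is a reverse well-ordered set of monomials---this needs the regularity of $\omega_1$ together with length estimates of the shape $l(a)\le\mu\cdot\omega^{\nu+1}$ as in Proposition~\ref{c1}.

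The second gap is the one the paper itself flags as delicate. The groups $\Gamma$ that actually arise are of the form $\R[[\omega^I]]$ and are \emph{not} countable (they contain $\R$), so the real task is to show that every element of $K=\R[[\omega^{\Gamma}]]$ has countable length even though its support is an arbitrary reverse well-ordered subset of $\omega^{\Gamma}$. This forces one to prove that all well-ordered and reverse well-ordered subsets of $\Gamma$ are countable, which is not ``bookkeeping'': it requires Esterle's theorem \cite[Lemme~2.2]{E} that the Hahn group $\R[[\omega^S]]$ over a set $S$ of countable type is again of countable type, together with Lemma~\ref{c2} that every well-ordered subset of $\No(\mu)$ is countable for countable $\mu$---itself proved by order-embedding $\No(\mu)$ into a Hahn group and citing Esterle again, since the naive density argument fails ($\No(\mu)$ has no countable dense subset for $\mu>\omega$). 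Your proposal never confronts the possibility of uncountable well-ordered supports, which is exactly where a direct ``everything stays countable'' argument breaks down.
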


\noindent
This also uses a result of Esterle~\cite{E} and its consequence that 
for any countable ordinal $\alpha$, any well-ordered set of surreals of length
$<\alpha$ is countable: Lemma~\ref{c2}.  

Finally, we establish an embedding result for $H$-fields:

\begin{theoremintro}\label{BM3} Every $H$-field with small derivation and constant field~$\R$ can be embedded over~$\R$ as an ordered differential field into 
$\No$.
\end{theoremintro}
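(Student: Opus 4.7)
The plan is a Zorn's lemma argument building the embedding one element at a time. Consider the partially ordered set of pairs $(F, i)$ where $F \supseteq \R$ is an ordered differential subfield of $K$ and $i\colon F \to \No$ is an embedding of ordered differential fields that is the identity on $\R$, with extension as the order. Every chain has an upper bound (union), so there is a maximal such $(F, i)$, and the task reduces to showing $F = K$: given any $a \in K \setminus F$, I need to produce $b \in \No$ such that $a \mapsto b$ extends $i$ to an embedding of the differential subfield $F\langle a\rangle \subseteq K$.

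The existence of such a $b$ should follow from the ``universal'' character of $\No$ as a differential field. First, by Theorem~\ref{BM1} the field $\No$ is elementarily equivalent to $\T$, so it is an $\omega$-free, newtonian, Liouville closed $H$-field with small derivation and constant field $\R$; these are precisely the algebraic closure properties isolated in~\cite{ADH} (Theorems 15.0.1 and 16.0.1) that govern extensions of $H$-fields. Second, the directed-union decomposition of $\No$ as $\bigcup_\Gamma \R[[\omega^\Gamma]]$ with each piece closed under $\der_{\BM}$, which is the main technical construction of this paper, provides Hahn-series ambient spaces with ample room to realize formal limits. Consequently, the standard $H$-field extension dichotomy applies: either $a$ is differentially transcendental over $F$ (and one finds a sufficiently generic $b \in \No$ in the correct cut over $i(F)$, using the size of $\No$ and the room afforded by the Hahn-series pieces), or $a$ is differentially algebraic over $F$ (and one solves the minimal differential polynomial $P^i(Y) = 0$ inside $\No$ using newtonianity, again matching the appropriate cut).

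I expect the main obstacle to be the immediate extension subcase of the differentially algebraic situation: given a pseudo-Cauchy sequence in $F$ without pseudo-limit in $F$ but with prescribed asymptotic data, find a pseudo-limit in $\No$ over $i(F)$ that moreover annihilates the relevant minimal differential polynomial and lies in the correct archimedean and cut class. Handling this requires showing that within the directed union of Hahn-series subfields one can always embed $F$ into some $\R[[\omega^\Gamma]]$ where the pseudo-limit is formed as an actual Hahn sum; the $\omega$-freeness and newtonianity of $\No$ (transported from $\T$ via Theorem~\ref{BM1}) then guarantee that this formal pseudo-limit has the required differential-algebraic behaviour. Once this immediate case is in hand, the purely transcendental and algebraic cases are routine, and the Zorn argument closes.
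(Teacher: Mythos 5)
Your overall strategy---reduce to a one-step extension problem and solve it using the closure properties of $\No$ as a model of the theory of $\T$---is in the right spirit, but as written the Zorn argument has a genuine gap: you run it in the language of ordered differential fields, and in that language the one-step extension property you need is false. An $H$-subfield $F$ of $K$ that is not $\upo$-free can admit essentially different completions (for instance, two non-isomorphic Liouville closures, the choice being governed by which elements become derivatives of infinitesimals), so a maximal pair $(F,i)$ in your poset may well have $F\neq K$: an element $a\in K\setminus F$ can pin down one completion while the already-chosen $i(F)\subseteq\No$ is compatible only with the other, and nothing in your setup prevents such a bad $i$ from having been built at an earlier stage. This is exactly why the paper does \emph{not} work with plain ordered differential field embeddings but with $\Upl\Upo$-fields: one expands $K$ to a structure $(K,I,\Lambda,\Omega)$ recording the cuts determined by the predicates $\I$, $\Upl$, $\Upo$, chooses the expansion with $1\notin I$ so that all further $\Upl\Upo$-field extensions keep the derivation small, extends $K$ to a $\upo$-free newtonian Liouville closed $\Upl\Upo$-field with the same constant field, and only then invokes the embedding theorem \cite[16.2.3]{ADH}, which also requires the base of the extension to be $\upo$-free. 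Your ``standard $H$-field extension dichotomy'' is precisely the content of that machinery, and it does not apply without this bookkeeping.

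A second, smaller gap concerns saturation: to realize the relevant cuts over $i(F)$ you need a saturation property of the target, and since $F$ may have cardinality up to $\card(K)$ this does not follow from ``the size of $\No$'' or from spherical completeness of the pieces $\R[[\omega^\Gamma]]$. The paper handles it by proving $\No(\kappa)\prec\No$ for uncountable $\kappa$ and showing that for regular uncountable $\kappa$ the ordered set underlying $\No(\kappa)$ and its value group are $\kappa$-saturated (Lemma~\ref{lemsat}); one then embeds $K$ into $\No(\kappa)$ for a regular $\kappa>\card(K)$ in a single application of \cite[16.2.3]{ADH} rather than element by element. If you replace your poset by one of $\Upl\Upo$-field embeddings, arrange $1\notin I$, and supply the saturation statement, your argument collapses into the paper's.
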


\noindent
Thus every Hardy field extending $\R$ embeds over $\R$ as an ordered differential field into $\No$. Despite these excellent properties of $\der_{\BM}$, 
Schmeling's thesis~\cite{S} gives us reason to believe that $\der_{\BM}$ is not yet the ``best'' derivation on $\No$. We expect to address this issue in 
later papers.

\medskip\noindent
We thank Philip Ehrlich and Elliot Kaplan for giving us useful information about initial substructures of $\No$ of various kinds. We also thank the referee 
for pointing out places where more detail was needed and for debunking our 
initial attempt to prove Lemma~\ref{c2}.

\section{Preliminaries}\label{sec:pre} 

\noindent
Here we fix notation and terminology and summarize the results from \cite{ADH, BM, G} that we need as background material and
as tools in our proofs.

\subsection*{Notations and terminology}
Below, $m$,~$n$ range over $\N=\{0,1,2,\dots\}$, and 
$\alpha$,~$\beta$ and $\mu$,~$\nu$ range over ordinals. (The letter $\lambda$ will serve another purpose, as in \cite{BM}.)

As in \cite{G}, a {\em surreal number\/} is by definition a function $a\colon \mu \to \{{-},{+}\}$ on an ordinal $\mu=\{\alpha:\ \alpha< \mu\}$. For such $a$ we let
$l(a):=\mu$ be the {\em length}\/ of $a$. From now on we let $a$,~$b$,~$x$,~$y$ be surreal numbers. The class~$\No$ of surreal
numbers carries a canonical linear ordering $<$: $a< b$ iff $a$ is lexicographically less than $b$, where by convention we set $a(\mu):=0$ for~${\mu\ge l(a)}$ and linearly order~$\{{-},0,{+}\}$ by ${-} < 0 < {+}$.
We also have the canonical partial ordering~$<_s$ on~$\No$ given by: $a<_s b$ (``$a$ is simpler than~$b$'') iff $a$ is a proper initial segment of $b$, that is, $l(a) < l(b)$, and $a|_{\mu} =b|_{\mu}$ for $\mu:=l(a)$. 
For sets $A,B\subseteq \No$ with $A<B$ (that is, $a<b$ for all $a\in A$ and $b\in B$) we
let $x=A|B$ mean that $x$ is the simplest surreal with $A<x<B$, as in \cite{G} and \cite{BM}. We also use the terms ``canonical representation'' and ``monomial representation'' (of a surreal number) as in \cite{BM}.

 The ordinal $\alpha$ is identified with the surreal $a\colon \alpha \to \{ {-},{+}\}$ with $a(\beta)={+}$ for all~$\beta<\alpha$. A useful fact is the equivalence $\alpha < x\Longleftrightarrow \alpha\dot{+}1\le_s x$, where~$\alpha\dot{+}1$ is the successor ordinal to $\alpha$. The subclass of $\No$ consisting of the ordinals is denoted by~$\On$. A set $S\subseteq \No$ is said to be {\em initial\/} if $x\in S$ whenever 
$x<_s y\in S$. As in \cite{DE} we set $\No(\alpha)=\big\{x:\ l(x)<\alpha\big\}$, an initial subset of $\No$. 

 We refer to \cite{G} or \cite{BM} for the inductive definitions of the binary operations of addition and multiplication on $\No$ that make $\No$ into a real closed field, with the ordinal $0$ as its zero element and the ordinal
$1$ as its multiplicative identity. The field ordering of this real closed field is the above lexicographic linear ordering $<$. This field $\No$ contains $\R$ as an initial subfield in the way specified in \cite{G}. The field sum $\alpha+n$ equals the ordinal sum $\alpha\dot{+}n$.
Each initial set $\No(\omega^\alpha)$ underlies an additive subgroup of $\No$; see \cite{DE}.

  Let $\Gamma$ be an (additively written) ordered abelian group. Then we set 
$$\Gamma^{>}\ :=\ \{\gamma\in \Gamma:\ \gamma>0\}.$$ We use this notation also for the underlying additive groups of $\No$ and $\R$, so $\No^{>}=\{a:\ a>0\}$, and $\R^{>}:=\{r\in \R:\ r>0\}$. For $\gamma\in \Gamma$ we define
$$[\gamma]\ :=\ \big\{\delta\in \Gamma:\ \text{$|\delta|\le n|\gamma|$ and $|\gamma|\le n|\delta|$ for some $n\ge 1$}\big\},$$
the {\em archimedean class of $\gamma$\/} (in $\Gamma$). The archimedean classes
of elements of $\Gamma$ partition the set $\Gamma$, and we totally order this set of archimedean classes by
$$[\gamma_1]\ <\ [\gamma_2]\quad :\Longleftrightarrow\quad  \text{$n|\gamma_1|\ <\ |\gamma_2|$ for all $n\ge 1$.}$$ 
Thus the least archimedean class is $[0]=\{0\}$, 
the {\em trivial\/} archimedean class. 

The convex hull of $\R$ in $\No$ is a valuation ring $V$ of the field $\No$. We consider~$\No$ accordingly as a {\em valued\/} field whose (Krull) valuation $v$ has $V$ as its valuation ring. For any (Krull) valued field
$K$ with valuation $v$ and elements $f,g\in K$ we let $f\preceq g$, $f\prec g$, $f\asymp g$, $f\sim g$ abbreviate $v(f)\ge v(g)$, $v(f) > v(g)$, $v(f)=v(g)$, and
$v(f-g)>vf$. (See \cite[Section~3.1]{ADH}.) We shall use these notations in particular
for the valued field $\No$.  

\subsection*{The omega map, the Conway normal form, and summability} We assume familiarity with Conway's omega map $x\mapsto \omega^x\colon \No\to \No^{>}$. Recall that $\omega^x$ is the simplest positive element in its archimedean class; so $\omega^x\prec \omega^y$ whenever~$x<y$. See~\cite{G} for details, including the proof that each~$a$ has a unique representation 
$$a\ =\ \sum_x a_x\omega^x \qquad(\text{the Conway normal form of } a)$$ with real coefficients $a_x$ such that $E(a):=\{x:\ a_x\ne 0\}$ is a subset of $\No$ (not just a subclass) and is reverse well-ordered. This will be the meaning of $E(a)$ and $a_x$ throughout. The {\em leading monomial of $a$\/} is
$\omega^x$ with $x=\max E(a)$, for $a\ne 0$. 
The {\em terms\/} of $a$ are the $a_x\omega^x$ with $a_x\ne 0$. 
The omega map extends the
usual ordinal exponentiation $\alpha\mapsto \omega^\alpha$.
Given any set $S\subseteq \No$ we let $\R[[\omega^S]]$ denote the additive subgroup of $\No$ consisting of the surreals $a$ with $E(a)\subseteq S$.  

\medskip\noindent
Let $(a_i)_{i\in I}$ be a family of surreals; this includes
$I$ being a set.
We say that $(a_i)$ is {\em summable\/}
(or that $\sum_i a_i$ exists) if $\bigcup_i E(a_i)$ is reverse well-ordered, and for each~$x$ there are only finitely many $i\in I$ with $x\in E(a_i)$; in that case we set $\sum_i a_i:=\sum_x \left(\sum_i a_{i,x}\right)\omega^x$. If $S$ is a subset of $\No$, then for any summable family $(a_i)$ in~$\R[[\omega^S]]$ we have $\sum_i a_i\in \R[[\omega^S]]$.

\medskip\noindent
As in \cite{BM}, we let $\fM$ denote the class of {\em monomials\/}
$\omega^x$; so $\fM$ is a multiplicative subgroup of $\No^\times$.
The Conway normal form allows us to consider any surreal number $a$
as a {\em generalized series}
$$ a\ =\ \sum_{\fm \in \fM} a_\fm \fm $$
with coefficients $a_\fm\in \R$, monomials  $\fm\in \fM$, and reverse
well-ordered {\em support\/} $\supp a := \{ \fm \in \fM: a_\fm \neq 0  \} = \omega^{E(a)}$.
This makes the above notion of summability for surreal numbers coincide
with the corresponding notion for generalized series from~\cite[Section~1.5]{S}.

Next, $\J:=\big\{a:\ E(a)\subseteq\No^{>}\big\}$ is the class of {\em purely
infinite\/} surreals, an additive subgroup of $\No$ that is moreover closed under multiplication. 
Thus $\fM\cap \J=\fM^{\succ 1}$, and $\No = \J \oplus \R \oplus \No^{\prec 1}$. 

\subsection*{Exponentiation, and the functions $g$ and $h$}
Gonshor~\cite{G} gave an inductive definition of the exponential function $\exp\colon \No \to \No^{>}$, and established its basic properties. These include $\exp$ being an order-preserving isomorphism from the additive group of $\No$ onto its multiplicative group of positive elements. The inverse of
$\exp$ is of course denoted by $\log\colon \No^{>}\to \No$.
The $n$th iterate of the map $\exp\colon \No \to \No$ is denoted by
$\exp_n$, so $\exp_0$ is the identity map on $\No$, and
$\exp_1 (x)=\exp(x)$. Also $\nate^x:= \exp(x)$. The logarithmic map $\log$ maps
$\No^{>\N}$ into itself; the $n$th iterate of the
restriction of $\log$ to a map $\No^{>\N}\to \No^{>\N}$ is
denoted by $\log_n$, so $\log_0$ is the identity map on
$\No^{>\N}$ and $\log_1(x)=\log(x)$ for $x>\N$. 

The exponential map $\exp$ and the omega-map $x\mapsto \omega^x$ are related by the order preserving bijection 
$g\colon \No^{>} \to \No$, which satisfies 
 $$\exp(\omega^x)\ =\ \omega^{\omega^{g(x)}}\quad \text{ for all $x>0$.}$$
We have $g(n)=n$ for all $n$. More generally, Theorem~10.14 in~\cite{G} says that
$g(\alpha)=\alpha$ unless 
$\epsilon\le \alpha < \epsilon + \omega$ for some 
$\epsilon$-number, in which case $g(\alpha)=\alpha+1$. (An {\it $\epsilon$-number}\/ is an ordinal $\epsilon$ such that $\omega^\epsilon=\epsilon$.) We shall need $g(x)$ mainly in the other extreme case where
$x$ has the form $\omega^{-\alpha}$. Here Theorem~10.15 in~\cite{G} gives $g(\omega^{-\alpha})=-\alpha+1$. 

\medskip\noindent
We also use the inverse $h\colon \No \to  \No^{>}$ of $g$.
Note that
$$\omega^{\omega^y}\ =\ \exp(\omega^{h(y)})\quad 
\text{ for all }y.$$
The result above for $g(\omega^{-\alpha})$ yields 
$h(-\alpha+1)= \omega^{-\alpha}$, from which we get
$$ \log \omega^{\omega^{-\alpha+1}}\ =\ \omega^{\omega^{-\alpha}}.$$
Applying this to the ordinal $\alpha+1$ instead of $\alpha$ we get 
$$ \log \omega^{\omega^{-\alpha}}\ =\ 
\omega^{\omega^{-(\alpha +1)}}.$$

\medskip\noindent
From \cite{G} we have $\exp(\J)=\fM$. Thus besides the Conway normal form and 
the series
representation, any surreal number $a$ also has
a unique representation
$$ a\ =\ \sum_{j \in \J} a_j\nate^j \qquad(\text{exponential normal form of } a) $$
with real coefficients $a_j$ and reverse well-ordered 
$\{ j \in \J: a_j \neq 0\}$; this is also called the {\em Ressayre form of $a$}.
For nonzero $a$ with leading monomial $\nate^b$,
$b\in \J$, we set $\ell(a):=b$. Then
$-\ell\colon \No^{\times} \to \J$ is a (Krull) valuation on the field~$\No$, and  
$$\big\{a: -\ell(a)\ge 0\big\}\ =\ \big\{a:\   \text{$|a|\le r$ for some $r\in \R^{\ge 0}$}\big\}\ =\ V,$$
so we may consider
$-\ell$ as the valuation of our valued field $\No$.     
Important in \cite{BM} is also the class 
$\fA$ of {\em log-atomic\/} surreals, consisting of the $a>\N$ all whose iterated logarithms $\log_n a$ lie in $\fM$. We have $\fA\subseteq \frak{M}^{\succ 1}$ and $\exp(\fA)=\log(\fA)=\fA$. It follows from $\fA\subseteq \fM$ that if $x,y\in \fA$ and $x<y$, then $x\prec y$. (In \cite{BM} the class of log-atomic surreals
is denoted by $\mathbb{L}$, but this notation conflicts with ours in other
papers.)

\subsection*{Surreal derivations.} We summarize here some results from \cite{BM} as needed, and add a few remarks. 
A {\em surreal derivation\/} is a derivation $\der$ on the field~$\No$ such that\begin{enumerate}
\item[(SD1)] $\big\{a: \der(a)=0\big\}=\R$;
\item[(SD2)] $\der(a)>0$ for all $a>\R$;
\item[(SD3)] $\der\big(\exp(a)\big)=\der(a)\exp(a)$ 
for all $a$;
\item[(SD4)] for any summable family $(a_i)$ of surreals, the
family $\big(\der(a_i)\big)$ is also summable, and $\der\left(\sum_i a_i\right)=\sum_i \der(a_i)$.
\end{enumerate}
The ordered field $\No$ equipped with any surreal derivation is an $H$-field; this doesn't need (SD3) or (SD4). The particular derivation $\der_{\BM}$ is surreal, maps $\fA$ into $\fM$, and is obtained in \cite{BM} as a special case of a rather general construction. Before we get to that, we mention 
Proposition~6.5 and Theorem~6.32 from that paper: \begin{enumerate}
\item[(BM1)] If $\der$ is a surreal derivation, then for all $x,y>\N$ with $x-y>\N$ we have 
$$\log \der(x)-\log \der(y)\ \prec\ x-y.$$
\item[(BM2)] Any map $D\colon\fA \to \R^{>}\fM$ such that for all $x,y\in \fA$,
$$D(\exp x)\ =\ D(x)\exp x, \quad 
\log D(x)-\log D(y)\ \prec\ \max(x,y),$$  
extends to a surreal derivation. 
\end{enumerate}
Thus (BM2) is a partial converse to (BM1), although the condition in (BM2)
that~$D$ takes only values in $\R^{>}\fM$ seems a rather severe restriction. 
We define a {\em pre-derivation\/} to be a map $D\colon\fA \to \R^{>}\fM$ as in (BM2). Note that if $D$ is a pre-derivation, then
\begin{equation}\label{eq:D(a)}\tag{$\ast$}
D(a)=\left(\prod_{m<n} \log_m a\right)\cdot D(\log_n a)\qquad\text{for all $a\in \fA$ and all $n$.}
\end{equation}
A pre-derivation $D$ actually extends canonically to a surreal
derivation $\der_D$. To define $\der_D$ in terms of $D$
we rely on the notion of {\em path derivatives},
introduced in~\cite{vdH:phd}, further developed in~\cite{S},
and adapted to the surreal setting in~\cite{BM}.
A {\em path\/} is a function $P\colon \N \to \R^{\times}\fM$
such that $P(n+1)$ is a term of $\ell(P(n))$, for all $n$. Given~$x$, the paths $P$ such that
$P(0)$ is a term of $x$ are the elements of a set $\mathcal{P}(x)$. For $x\in \fA$ there is a unique path 
$P\in \mathcal{P}(x)$; it is given by $P(n)=\log_n x$.
Thus if $P$ is a path and $P(m)\in \fA$, then $P(n)=\log_{n-m} P(m)$ for all $n\ge m$, so
$P(n)\in \fA$ for all $n\ge m$.

\medskip\noindent
Let $D$ be a pre-derivation. The {\em path derivative\/} $\der_D(P)\in \R\fM$ for a path $P$ is defined as follows,
with \eqref{eq:D(a)} guaranteeing independence of $n$ in (1): \begin{enumerate}
\item if $P(n)\in \fA$, then
$\der_D(P):= \left(\prod_{m<n} P(m)\right)\cdot D(P(n))$;
\item if $P(n)\notin \fA$ for all $n$, then $\der_D(P):= 0$.
\end{enumerate}
The rationale behind path derivatives is the following proposition:
\begin{enumerate}
\item[(BM3)] For each $a$ the family $\big(\der_D(P)\big)_{P\in \mathcal{P}(a)}$ is summable.
\end{enumerate}
This result is stated in \cite[Proposition~6.20]{BM} only for one particular
pre-derivation, but, as the authors mention, the proof extends to any pre-derivation.
In view of (BM3) we can now define $\der_D\colon \No \to \No$ by
$$\der_D(a)\ :=\ \sum_{P\in \mathcal{P}(a)} \der_D(P).$$
It follows from \eqref{eq:D(a)} that $\der_D$ extends $D$, and the arguments in \cite[Section~6]{BM} show that $\der_D$ is a surreal derivation.
 
\subsection*{Results from \cite{ADH}} To state the relevant facts, we 
recall from \cite{AD} or \cite{ADH} that an {\it $H$-field}\/ is by definition
an ordered differential field $K$ with derivation $\der$ and constant field~${C=\big\{f\in K:\ \der(f)=0\big\}}$ such that: \begin{enumerate}
\item[(H1)] $\der(f)>0$ for all $f\in K$ with $f>C$;
\item[(H2)] $\mathcal{O}=C+\smallo$, where $\mathcal{O}$ is the
convex hull of $C$ in $K$, and $\smallo$ is the maximal ideal
of the valuation ring $\mathcal{O}$.
\end{enumerate}
Let $K$ be an $H$-field, and let $\mathcal{O}$ and $\smallo$ be as in (H2). Thus $K$ is a valued field
with valuation ring $\mathcal{O}$. We consider $K$ in the natural way as an 
$\mathcal{L}$-structure, where 
$$\mathcal{L}:= \{\,0,\,1,\, {+},\, {-},\, {\times},\, \der,\, {\le},\, {\preceq}\,\}$$ is the language of ordered valued differential fields; in particular,
$$f\preceq g\ \Longleftrightarrow\ f\in \mathcal{O} g\ \Longleftrightarrow\ |f|\le c|g| \text{ for some $c\ge 0$ in }C.$$ 
Given $f\in K$ we also write $f'$ instead of $\der(f)$, and we set
$f^\dagger:= f'/f$ for $f\ne 0$, so
$(fg)^\dagger=f^\dagger + g^\dagger$ and $(1/f)^\dagger=-f^\dagger$ for $f,g\in K^\times$. A useful subset of the value group $\Gamma:= v(K^\times)$ of the valued field $K$ is
$$\Psi\ :=\ \Psi_K\ :=\ \big\{v(f^\dagger):\ f\in K^\times,\ f\not\asymp 1\big\}\ =\ \big\{v(f^\dagger):\ f\in K,\ f>C\big\}.$$
As in \cite{ADH} we call $K$ {\em grounded\/} if $\Psi$ has a largest element.
For the convenience of the reader we include a proof of the following wellknown fact.

\begin{lemma}\label{grarch} Assume $K$ has constant field $C=\R$. Then $K$ is grounded iff 
$\Gamma$ has a smallest nontrivial archimedean class. 
\end{lemma}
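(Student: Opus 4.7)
My plan is to prove both directions via a key observation, using $C=\R$, that for any $h\in K^\times$ with $h\succ 1$ we have $|h|>\R$, so by~(H1) $h'$ has the same sign as~$h$ and hence $h^\dagger=h'/h>0$. The idea is to exploit the $H$-field structure at elements of the form $g^n/f$ or $f/g^n$: when such an element is $\succ 1$ (as happens when comparing elements across archimedean classes), its logarithmic derivative $ng^\dagger-f^\dagger$ or $f^\dagger-ng^\dagger$ is strictly positive, forcing monotonicity of $v(\cdot^\dagger)$ in archimedean class.

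For ($\Leftarrow$), given a smallest nontrivial archimedean class $[\gamma_0]$ of~$\Gamma$, pick $f\in K^\times$ with $vf=\gamma_0$; I claim $v(f^\dagger)=\max\Psi$. For any $g\in K^\times$ with $g\not\asymp 1$, using $(1/x)^\dagger=-x^\dagger$ and replacing $f$ and $g$ by their absolute values and (if necessary) reciprocals, I may assume $f,g>0$ and $\succ 1$. Then $[vg]\geq[vf]$ gives $f\preceq g^n$ for some $n\geq 1$, hence $f\prec g^{n+1}$ (since $g\succ 1$), so $g^{n+1}/f\succ 1$ and $(n+1)g^\dagger-f^\dagger=(g^{n+1}/f)^\dagger>0$ by the key observation. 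With $f^\dagger>0$, this rules out $v(g^\dagger)>v(f^\dagger)$ (else $g^\dagger\prec f^\dagger$ would give $(n+1)g^\dagger-f^\dagger\sim -f^\dagger<0$), as desired.

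For ($\Rightarrow$), let $\beta=\max\Psi=v(f^\dagger)$ and suppose for contradiction $[\delta]<[vf]$ for some $\delta\neq 0$; pick $g\in K^\times$ with $vg=\delta$ and normalize $f,g>0$, $\succ 1$ as above. Then $[vg]<[vf]$ gives $f\succ g^n$ for every~$n\geq 1$, so $f^\dagger-ng^\dagger=(f/g^n)^\dagger>0$ for all~$n$ by the observation. If $v(g^\dagger)<\beta$, then $f^\dagger-ng^\dagger\sim -ng^\dagger<0$ (as $g^\dagger>0$), contradiction. If $v(g^\dagger)=\beta$, then $g^\dagger\asymp f^\dagger$, so invoking $\mathcal{O}=\R+\smallo$ (a consequence of $C=\R$ and~(H2)) I write $g^\dagger/f^\dagger=c+\epsilon$ with $c\in\R^{>0}$ and $\epsilon\prec 1$; then for any integer $n>1/c$ I compute $f^\dagger-ng^\dagger=(1-nc-n\epsilon)f^\dagger\sim(1-nc)f^\dagger<0$ (using $f^\dagger>0$), again a contradiction. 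The main obstacle is this last case $v(g^\dagger)=v(f^\dagger)$: it is exactly there that both consequences of $C=\R$ — namely $\mathcal{O}=\R+\smallo$ and the archimedean property of~$\R$ used to select the integer~$n$ — are indispensable.
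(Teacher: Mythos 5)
Your proof is correct and rests on the same mechanism as the paper's: the order-reversing relationship between archimedean classes of $\Gamma$ and values $v(f^\dagger)$, obtained from the positivity of $h^\dagger$ for $h\succ 1$ (which the paper imports as \cite[Lemma~1.4]{AD} and you re-derive directly from (H1)), with $C=\R$ entering at the same essential point in both arguments. The paper merely packages the two directions into a single statement — an order-reversing bijection $[v(f)]\mapsto v(f^\dagger)$ from the nontrivial archimedean classes onto $\Psi$ — where you carry out the two implications separately.
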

\begin{proof} Let $f,g\in K$, $f,g>C$. Suppose the archimedean class 
$\big[v(f)\big]=\big[v(1/f)\big]$ of
$v(f)$ is greater than $\big[v(g)\big]$. This means: $v(f) < nv(g)=v(g^n)<0$ for all 
$n\ge 1$. Hence $f^\dagger> (g^n)^\dagger=ng^\dagger>0$ for all $n\ge 1$, 
by~\cite[Lemma~1.4]{AD}, so $v(f^\dagger) < v(g^\dagger)$. A similar argument 
(which doesn't need $C=\R$) shows that if $\big[v(f)\big]=\big[v(g)\big]$, then 
$v(f^\dagger)=v(g^\dagger)$.  Thus we have an
order-reversing bijection $\big[v(f)\big] \mapsto v(f^\dagger)$ ($f\in K$, $f>C$)
from the set of nontrivial archimedean classes of $\Gamma$ onto $\Psi$.
\end{proof}

\noindent
An {\em $H$-subfield of~$K$\/} is by definition an ordered differential subfield of $K$ that is an $H$-field. 
In \cite{ADH} we axiomatized the elementary (=~first-order) theory of the $H$-field~$\T$
of transseries. This (complete) theory is called $T_{\text{small}}^{\text{nl}}$ there and its models are exactly the $H$-fields $K$ satisfying the following (first-order) conditions: \begin{enumerate}
\item the derivation of $K$ is small, that is, $\der\smallo\subseteq \smallo$;
\item $K$ is Liouville closed;
\item $K$ is $\upo$-free;
\item $K$ is newtonian.
\end{enumerate} 
(An $H$-field $K$ is said to be {\it Liouville closed}\/ if it is real closed and for all $f\in K$ there exists $g\in K$ with $g'=f$ and an $h\in K^\times$ such that $h^\dagger=f$; for the definition of ``$\upo$-free'' and ``newtonian'' we refer to the
Introduction of \cite{ADH}.)  
Dropping the smallness axiom (1), we get the incomplete but model complete
theory $T^{\text{nl}}$; see~\cite[Chapter~16]{ADH}. The 
$H$-field $\T$ satisfies (3) and (4) by ~\cite[Corollary~11.7.15 and Theorem~15.0.1]{ADH}, which for an arbitrary $H$-field $K$ amount to the following:

{\em If $\der K = K$ and $K$ is a directed union of spherically complete grounded $H$-sub\-fields, then $K$ is $\upo$-free and newtonian.}

The condition $\der K = K$ is automatically satisfied if
$K$ is a directed union of spherically complete grounded $H$-subfields $E$ such that for some $\phi\in E$ we have $v(\phi)=\max\Psi_E$ and $\phi\in \der K$, by \cite[Corollary 15.2.4]{ADH}.

\section{Infinite Products and Log-atomic Surreals}\label{sec:infprod}
 
\noindent
The pre-derivation $D$ in \cite{BM} with $\der_D=\der_{\BM}$ is
defined by a certain identity. Towards the end of this section we give this
identity a more suggestive form, which we found useful. 
But we begin with some remarks on $\epsilon$-numbers, which play an important role in the next sections.

\subsection*{Remarks on $\epsilon$-numbers}
Throughout this paper $\epsilon$ will denote an $\epsilon$-number, that is,
$\epsilon$ is an ordinal such that $\omega^\epsilon=\epsilon$.

\begin{lemma} For any $\alpha$ there is a least
$\epsilon$-number $\epsilon(\alpha)\ge \alpha$. Moreover, if $\alpha$ is infinite, then
$\card(\epsilon(\alpha))=\card(\alpha)$. 
\end{lemma}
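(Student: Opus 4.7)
The plan is to construct $\epsilon(\alpha)$ as the limit of the standard iteration $\alpha \mapsto \omega^{\alpha}$, and then verify minimality and cardinality separately.

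First, I would define $\alpha_0 := \alpha$ and $\alpha_{n+1} := \omega^{\alpha_n}$. Since ordinal exponentiation with base $\omega$ is strictly increasing in the exponent and satisfies $\omega^\beta > \beta$ for all $\beta$, the sequence $(\alpha_n)$ is strictly increasing (once past the trivial case $\alpha = 0$, where one checks directly). Setting $\epsilon(\alpha) := \sup_n \alpha_n$, continuity of $\omega^{(-)}$ at limits yields
\[
\omega^{\epsilon(\alpha)}\ =\ \sup_n \omega^{\alpha_n}\ =\ \sup_n \alpha_{n+1}\ =\ \epsilon(\alpha),
\]
so $\epsilon(\alpha)$ is an $\epsilon$-number, and clearly $\epsilon(\alpha) \geq \alpha_0 = \alpha$.

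For minimality, let $\epsilon$ be any $\epsilon$-number with $\epsilon \geq \alpha$. I would show $\epsilon \geq \alpha_n$ for all $n$ by induction: if $\epsilon \geq \alpha_n$, then $\epsilon = \omega^\epsilon \geq \omega^{\alpha_n} = \alpha_{n+1}$. Taking the supremum, $\epsilon \geq \epsilon(\alpha)$.

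For the cardinality statement, assume $\alpha$ is infinite. I would invoke (or quickly prove) the standard fact that $\operatorname{card}(\omega^\beta) = \operatorname{card}(\beta)$ whenever $\beta$ is infinite; this follows from the Cantor normal form, which expresses each $\gamma < \omega^\beta$ by a finite sequence of pairs $(\beta_i, n_i)$ with $\beta_i < \beta$ and $n_i \in \N$, giving an injection of $\omega^\beta$ into the set of finite sequences from $\beta \times \N$, a set of cardinality $\operatorname{card}(\beta)$. By induction on $n$, each $\alpha_n$ then has cardinality $\operatorname{card}(\alpha)$. Since $\epsilon(\alpha) = \bigcup_n \alpha_n$ is a countable union of sets each of cardinality $\operatorname{card}(\alpha) \geq \aleph_0$, we obtain $\operatorname{card}(\epsilon(\alpha)) \leq \operatorname{card}(\alpha)$, and the reverse inequality is immediate from $\epsilon(\alpha) \geq \alpha$.

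The argument is essentially routine; the only point requiring mild care is the cardinality bound $\operatorname{card}(\omega^\beta) = \operatorname{card}(\beta)$, which is where I would be most explicit in the write-up. If desired, one could avoid the Cantor normal form entirely by observing that the function $\gamma \mapsto \omega \cdot \gamma$ (ordinal product) sends each infinite $\beta$ to an ordinal of the same cardinality, and then iterating to see $\omega^\beta$ embeds into the set of finite sequences from $\beta$.
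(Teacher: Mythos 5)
Your proof is correct and follows essentially the same route as the paper: iterate $\alpha_{n+1}=\omega^{\alpha_n}$, take the supremum, and use $\card(\omega^\beta)=\card(\beta)$ for infinite $\beta$. The paper merely states these facts more tersely (deriving them from the recursion defining $\omega^\alpha$), whereas you spell out the Cantor-normal-form justification of the cardinality bound.
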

\begin{proof} The recursion defining
$\omega^\alpha$ as a function of $\alpha$ easily yields
that this function is strictly increasing, with $\omega^\alpha\ge \alpha$, 
$\card(\omega^\alpha)= \max\big(\aleph_0,\card(\alpha)\big)$, and thus
$\card(\omega^\alpha)=\card(\alpha)$ if $\alpha$ is infinite.
Now define $\alpha_n$ as a function of $n$ by the recursion
$\alpha_0=\alpha$ and $\alpha_{n+1}=\omega^{\alpha_n}$.
Then $\sup_n \alpha_n$ is clearly the least
$\epsilon$-number $\ge \alpha$, and it has the same
cardinality as $\alpha$ if the latter is infinite.
\end{proof}

\noindent
If $\kappa$ is an uncountable cardinal, then by the remarks
in the proof above we have $\omega^\alpha < \kappa$ for all $\alpha < \kappa$. Thus uncountable cardinals are $\epsilon$-numbers. The least $\epsilon$-number is denoted by $\epsilon_0$, as usual, so $\epsilon_0=\sup_n \omega_n$ where the $\omega_n$ are defined by the recursion $\omega_0=\omega$ and $\omega_{n+1}=\omega^{\omega_n}$.

\subsection*{Infinite products of monomials} Recall that $\frak{M}$ is the multiplicative group of monomials 
$\omega^a$.  For a family
$(\frak{m}_i)$ in $\frak{M}$ we say that $\prod_i \frak{m}_i$
exists if $\sum_i a_i$ exists, with $\frak{m}_i=\omega^{a_i}$
for all $i$, and in that case, we set 
$$\prod_i \frak{m}_i\ :=\ \omega^{\sum_i a_i}\in \frak{M}.$$
The rules for manipulating these infinite products are easy consequences of those for infinite sums, and we shall freely use
them below. Note in particular that if~$(\frak{m}_i)$ is a family in $\frak{M}$ and $\prod_i \frak{m}_i$ exists, then
$\prod_i \frak{m}_i^{-1}$ exists and equals $(\prod_i \frak{m}_i)^{-1}$. 

In our definition of infinite products we could have represented monomials
as exponentials of elements in $\J$ instead of as powers of $\omega$.
Indeed, the equivalence between these options follows from
the next two lemmas:

\begin{lemma} Let $(a_i)$ be a summable family in $\mathbb{J}$. Then $\prod_i \exp(a_i)$ exists, and 
$$\exp\left(\sum_i a_i\right)\ =\ \prod_i \exp(a_i).$$
\end{lemma}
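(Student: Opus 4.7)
The plan is to exploit the fact that $\exp(\J)=\fM$, so for each $a\in\J$ there is a unique $\phi(a)\in\No$ with $\exp(a)=\omega^{\phi(a)}$. Starting from the identity $\exp(\omega^x)=\omega^{\omega^{g(x)}}$ for $x>0$ and the additivity of $\exp$ (together with its behavior on real multiples), Gonshor's analysis in \cite{G} yields the explicit formula
$$\phi(a)\ =\ \sum_{x\in E(a)} a_x\,\omega^{g(x)}\qquad\text{for}\ a=\sum_x a_x\omega^x\in\J,$$
and in particular $E(\phi(a))=g(E(a))$, where $E(a)\subseteq\No^{>}$.

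Now let $(a_i)$ be summable in $\J$, and set $b_i:=\phi(a_i)$, so that $\exp(a_i)=\omega^{b_i}$. By the definition of infinite products of monomials given just above, it suffices to show that $(b_i)$ is summable and that $\sum_i b_i=\phi\bigl(\sum_i a_i\bigr)$. Since each $E(a_i)\subseteq\No^{>}$, the sum $s:=\sum_i a_i$ lies in $\J$ (its support is contained in $\bigcup_i E(a_i)\subseteq\No^{>}$). For summability of $(b_i)$, use that $g\colon\No^{>}\to\No$ is an order-preserving bijection: it sends the reverse well-ordered set $\bigcup_i E(a_i)\subseteq\No^{>}$ to the reverse well-ordered set $\bigcup_i E(b_i)=g\bigl(\bigcup_i E(a_i)\bigr)$, and its injectivity ensures that for each $y$ only finitely many $i$ satisfy $y\in E(b_i)$ (equivalently $g^{-1}(y)\in E(a_i)$, which happens for only finitely many $i$ by summability of $(a_i)$). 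Hence $\prod_i\exp(a_i)=\omega^{\sum_i b_i}$ exists. A rearrangement of the double sum, legitimate by the summability just established and the fact that each $\omega^{g(x)}$-coefficient involves only finitely many $i$,
$$\sum_i b_i\ =\ \sum_i\sum_x (a_i)_x\,\omega^{g(x)}\ =\ \sum_x\Bigl(\sum_i (a_i)_x\Bigr)\omega^{g(x)}\ =\ \phi(s),$$
then gives $\exp\bigl(\sum_i a_i\bigr)=\omega^{\phi(s)}=\omega^{\sum_i b_i}=\prod_i\omega^{b_i}=\prod_i\exp(a_i)$.

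\textbf{Main obstacle.} The only delicate ingredient is the explicit description of $\phi$ on all of $\J$, not just on individual monomials $\omega^x$; this amounts exactly to the statement that $\exp$ respects summable sums from $\J$ into $\fM$, and is implicit in Gonshor's inductive construction of $\exp$. Once it is granted, the remainder of the argument is a routine transport of the two summability conditions across the order-preserving bijection $g\colon\No^{>}\to\No$.
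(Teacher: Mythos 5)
Your proposal is correct and follows essentially the same route as the paper: both rest on the explicit formula $\exp\bigl(\sum_{x>0}a_x\omega^x\bigr)=\omega^{\sum_{x>0}a_x\omega^{g(x)}}$ (which the paper cites directly as Theorem~10.13 of Gonshor, so the ingredient you flag as the "main obstacle" is an established result, not merely implicit), followed by transporting summability across $g$ and rearranging the double sum.
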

\begin{proof} We have $a_i=\sum_{x>0} a_{i,x}\omega^x$, so by \cite[Theorem 10.13]{G},
$$\exp(a_i)\ =\ \omega^{b_i}, \quad b_i\ :=\ \sum_{x>0} a_{i,x}\omega^{g(x)},$$
so $E(b_i)=g(E(a_i))$.
Since $\sum_i a_i$ exists, so does $\sum_i b_i$, and hence
$\prod_i \exp(a_i)=\prod_i \omega^{b_i}$ exists, and
$\prod_i \exp(a_i)=\omega^{\sum_i b_i}$. Moreover, with $\sum_i a_i = \sum_{x>0}a_x\omega^x$, we have $\sum_i b_i=\sum_{x>0} a_x \omega^{g(x)}$. Hence
again by \cite[Theorem 10.13]{G}, 
$$ \prod_i \exp(a_i)\ =\ \omega^{\sum_{x>0}a_x\omega^{g(x)}}\ =\  
\exp\left(\sum_{x>0} a_x\omega^x\right)\ =\ \exp\left(\sum_i a_i\right),$$
as claimed. 
\end{proof}

\begin{lemma} Let $(\fm_i)$ be a family in $\fM$ such that $\prod_i \fm_i$ exists. Then $\sum_i \log \fm_i$ exists, and
$\log \prod_i \fm_i = \sum_i \log \fm_i$.
\end{lemma}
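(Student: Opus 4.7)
The plan is to reduce to the preceding lemma via $\exp\circ\log$. Set $j_i := \log\fm_i$; since $\exp\colon\J\to\fM$ is bijective, $j_i\in\J$ and $\fm_i = \exp(j_i)$. Provided $(j_i)$ is summable, the preceding lemma applied to $(j_i)$ will give $\prod_i \fm_i = \prod_i \exp(j_i) = \exp\bigl(\sum_i j_i\bigr)$, and taking logarithms will yield $\log\prod_i\fm_i = \sum_i \log\fm_i$. So the crux reduces to showing summability of $(\log\fm_i)$ from the existence of $\prod_i\fm_i$; this is where the main work lies.

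For this I would first derive an explicit formula for $\log\omega^a$, by coefficient-wise inversion of the identity exploited in the proof of the preceding lemma. Write $\fm_i = \omega^{a_i}$ with $a_i = \sum_y a_{i,y}\omega^y$ in Conway normal form. The candidate identity is
\[
\log\fm_i\ =\ \sum_y a_{i,y}\,\omega^{h(y)},
\]
where $h\colon\No\to\No^{>}$ is the inverse of $g$. Call the right-hand side $j$. Since $h$ is order-preserving, injective, and has image $\No^{>}$, the set $h(E(a_i))$ is a reverse well-ordered subset of $\No^{>}$, so $j$ is a legitimate element of $\J$. Applying Gonshor's Theorem~10.13 to $j$ (just as in the proof of the preceding lemma) then gives $\exp(j) = \omega^{\sum_y a_{i,y}\omega^{g(h(y))}} = \omega^{a_i} = \fm_i$, confirming the formula.

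With the formula in hand, $E(\log\fm_i) = h(E(a_i))$. Then $\bigcup_i E(\log\fm_i) = h\bigl(\bigcup_i E(a_i)\bigr)$ is reverse well-ordered (because $h$ is order-preserving), and for each $x\in\No^{>}$ the condition $x\in E(\log\fm_i)$ is equivalent to $g(x)\in E(a_i)$ by injectivity of $h$; by summability of $(a_i)$ this holds for only finitely many $i$. Hence $(\log\fm_i)$ is summable, and the first paragraph finishes the proof. The main obstacle is the explicit identification of $\log\omega^{a_i}$ in the middle step; once that formula is established, the rest is a routine consequence of the order-preserving bijection $h$.
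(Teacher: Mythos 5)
Your argument is correct and follows essentially the same route as the paper's proof: both rest on Gonshor's Theorem~10.13 and the order-preserving bijection $g$ (equivalently its inverse $h$) to translate between the $\omega$-exponent and the Ressayre form of each $\fm_i$, thereby transferring summability from the exponents $a_i$ to the logarithms. The only cosmetic difference is that you make the formula $\log\omega^{a}=\sum_y a_y\omega^{h(y)}$ explicit and then cite the preceding lemma, whereas the paper phrases the same computation in terms of $g$ and redoes it directly.
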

\begin{proof} We have $\fm_i=\exp(a_i)$ with $a_i\in \J$, so
$a_i=\sum_{x>0} a_{i,x}\omega^x$, hence 
$$\fm_i\ =\ \omega^{b_i},\qquad b_i\ :=\ \sum_{x>0}a_{i,x}\omega^{g(x)}$$ 
by \cite[Theorem 10.13]{G}. Since the product $\prod_i \fm_i$
exists, so does $\sum_i b_i$, and therefore $\sum_i a_i=\sum_i \log \fm_i$ exists. Moreover, and again by \cite[Theorem 10.13]{G}, 
$$\prod_i \fm_i\ =\ \omega^{\sum_i b_i}\ =\ \omega^{\sum_{x>0} a_x \omega^{g(x)}}\ =\ \exp\left(\textstyle\sum\limits_{x>0}a_x\omega^x\right), \quad a_x:= \sum_i a_{i,x},$$
and so $\log \prod_i \fm_i=\sum_{x>0}a_x\omega^x=\sum_i a_i$.
\end{proof}

\subsection*{Log-atomic surreals} Recall that $\fA\subseteq \frak{M}^{\succ 1}$ is the class of log-atomic surreals. See \cite[Sections~1,~5]{BM} for the order-preserving bijection
$x\mapsto \lambda_x\colon \No \to \fA$ and for the fact that
$\lambda_x\le_s \lambda_y$ iff $x\le_s y$. It follows from
$\exp(\omega^x)=\omega^{\omega^{g(x)}}$ that $\fA\subseteq \omega^{\fM}$. Thus for any well-ordered index set $I$ and 
strictly decreasing map $i\mapsto \lambda_i\colon I \to \fA$
the product $\prod_i \lambda_i$ exists.  
We shall use Proposition~\ref{lambda2} and Corollary~\ref{lambda3} below to define the pre-derivation 
$\der_{\BM}|_{\fA}$.  

\begin{lemma}\label{lem:xplusone} Let $\frak{m}=A|B$ be a monomial representation with $\frak{m}\succ 1$. Then $$\exp(\frak{m})\ =\ \big(\frak{m}^{\N}\cup\exp(A)\big)\big|\exp(B).$$ 
\end{lemma}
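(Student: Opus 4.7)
The plan is to verify two things: the inequalities $\fm^\N\cup\exp(A) < \exp(\fm) < \exp(B)$ in the surreal ordering, and then that $\exp(\fm)$ is the \emph{simplest} surreal trapped between these two sets.

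First, strict monotonicity of $\exp$ on $\No$ together with $A<\fm<B$ gives at once $\exp(A)<\exp(\fm)<\exp(B)$. Since $\fm\succ 1$ means $\fm>\N$, Gonshor's partial-Taylor-polynomial inequality $\exp(y)>[r]_y:=\sum_{k\le r}y^k/k!$ for $y>0$ yields $\exp(\fm)>\fm^n$ for every $n$ (indeed $[rn]_\fm\succ\fm^n$ since $\fm>\N$). Hence $\fm^\N<\exp(\fm)$ as well, and the stated interval is nonempty with $\exp(\fm)$ inside it.

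Second, to identify $\exp(\fm)$ as the simplest surreal in this open interval I would invoke Gonshor's inductive formula for $\exp$ on positive arguments: if $y>0$ has canonical representation $y=y^L\,|\,y^R$ (with $y^L>0$), then
\[
\exp(y)\ =\ \bigl\{[r]_y,\ [r]_{y^L}\exp(y^L),\ [r]_{y^R}^{-1}\exp(y^R)\bigr\}\ \bigm|\ \bigl\{[r]_{y^L}^{-1}\exp(y^L),\ [r]_{y^R}\exp(y^R)\bigr\},
\]
with $r$ ranging over $\N^{\ge 1}$. Specialising to $y=\fm$ and using the canonical representation of $\fm=\omega^x$ (whose left options are $0$, positive reals, and integer multiples $n\,\omega^{x^L}$ of simpler monomials $\omega^{x^L}<\fm$, and whose right options are of the form $\omega^{x^R}/n$ with $\omega^{x^R}>\fm$ simpler), one reads off the two Gonshor option sets and then shows the left one is cofinal in $\fm^\N\cup\exp(A)$ and the right one is coinitial in $\exp(B)$. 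Cofinality on the left uses that $\{[r]_\fm\}_r$ is cofinal in $\fm^\N$ (each $\fm^n$ is eventually dominated by some $[r]_\fm$, while $[r]_\fm\prec \fm^{r+1}$), and that $\exp(n\omega^{x^L})=\exp(\omega^{x^L})^n$ is, as $n$ and $x^L$ vary, cofinal in $\exp(A)$; analogous bookkeeping controls the right side. By the simplest-between characterisation of $|$, this yields $\exp(\fm)=(\fm^\N\cup\exp(A))\mid\exp(B)$.

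The main obstacle is the second step's cofinality matching: one must rigorously align the canonical representation of $\fm=\omega^x$ (which a priori involves the non-monomial Gonshor options $r\in\R$, $n\omega^{x^L}$, $\omega^{x^R}/n$) with the strictly \emph{monomial} sets $A,B$ of the monomial representation, and show that the extra ``polynomial-in-$\fm$'' and ``multiple-of-$\fm^L$'' terms produced by Gonshor's formula are absorbed by $\fm^\N$ and $\exp(A)$ respectively, while the $\omega^{x^R}/n$ contributions on the right stay coinitial with $\exp(B)$. Once the cofinality/coinitiality match is established, the lemma is immediate from uniqueness of the simplest surreal in an open interval.
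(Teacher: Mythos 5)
Your overall strategy --- sandwich $\exp(\fm)$ between the two given sets and then identify it as the simplest element of that interval by matching against a known cut representation of $\exp(\fm)$ --- is the same as the paper's; the paper just starts from the already-packaged monomial form of Gonshor's formula, namely \cite[Theorem 3.8 (1)]{BM}, rather than from the raw genetic definition with the Taylor truncations $[r]_y$, which spares it the bookkeeping with the non-monomial options $r$, $n\omega^{x^L}$, $\omega^{x^R}/2^n$. Your first step (the inequalities $\fm^{\N}\cup\exp(A)<\exp(\fm)<\exp(B)$) is fine. The problem is in the second step, which you yourself flag as ``the main obstacle'': the one genuinely necessary idea there is missing, and it is not the Taylor-sum bookkeeping. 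First, the cofinality is stated in the wrong direction in places: to apply Conway's cofinality theorem you must show that every \emph{old} left option (from the known representation of $\exp(\fm)$) is dominated by some element of the \emph{new} left set $\fm^{\N}\cup\exp(A)$, and that every old right option lies above some element of $\exp(B)$; showing that the old options are ``cofinal in'' the new sets is not what is needed. Second, and more seriously, there is no a priori relation between the options $\omega^{x^L},\omega^{x^R}$ of $\fm=\omega^x$ and the sets $A,B$, which are just \emph{arbitrary} sets of monomials with $\fm=A|B$; your proposal never builds the bridge between them.

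That bridge is the simplicity argument, and it is the heart of the paper's proof: if $\fm'$ is a monomial with $\fm'<_s\fm$ and $\fm'<\fm$, then $\fm'$ cannot lie strictly between $A$ and $B$ (otherwise $\fm\le_s\fm'$, since $\fm$ is the simplest surreal in that interval, contradicting $\fm'<_s\fm$), and since $\fm'<\fm<B$ this forces $\fm'\le a$ for some $a\in A$; symmetrically, every monomial $\fm''>\fm$ with $\fm''<_s\fm$ satisfies $b\le\fm''$ for some $b\in B$. Only with this in hand can one dominate the left options $\fm^k$, $\exp(\fm')^k$ of the representation in \cite[Theorem 3.8 (1)]{BM} by elements of $\fm^{\N}\cup\exp(A)$, and bound the right options $\exp(\fm'')^{1/k}$ from below by elements of $\exp(B)$, which is exactly what the cofinality theorem requires. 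As written, your proposal reduces the lemma to precisely the statement that still needs proving; you should either supply this simplicity step explicitly or, as the paper does, cite \cite[Theorem 3.8 (1)]{BM} and carry out the short domination argument from it.
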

\begin{proof} For $\frak{m}'< \frak{m}$ with
$\frak{m}'<_s \frak{m}$ we have $\frak{m'}\le a$ for some $a\in A$ (since $A< \frak{m}'< \frak{m}< B$ gives $\frak{m}\le_s \frak{m}'$). Likewise, for $\frak{m}< \frak{m}''<_s \frak{m}$, we have $b\le \frak{m''}$ for some $b\in B$.
It follows that for $\frak{m}'$  as above and $k\in \N^{\ge 1}$ we have $\exp(\frak{m}')^k\le \exp(a)$ for some $a\in A$, and that for $\frak{m}''$ as above and $k\in \N^{\ge 1}$ we have $\exp(b) \le \exp(\frak{m}'')^{1/k}$ for
some $b\in B$. This yields the desired result in view of
\cite[Theorem 3.8 (1)]{BM}. 
\end{proof}

\noindent
The monomial representation $\omega=\N|\emptyset$ shows that
in the conclusion of Lemma~\ref{lem:xplusone} we cannot drop 
$\frak{m}^{\N}$. Below we use the binary relations $\asymp^L$ and $\prec^L$ from \cite{BM}. Let $x=\{x'\}|\{x''\}$ be the canonical 
representation of $x$, and let $j,k$ range over $\N^{\ge 1}$. Then by \cite[Definition 5.12]{BM}, the defining representation of $\lambda_x$ is given by
$$\lambda_x\ =\ \left\{k, \exp_j\!\big(k\log_j(\lambda_{x'})\big) \right\}\big|\left\{\exp_j\!\big(\textstyle\frac{1}{k}\log_j(\lambda_{x''})\big)\right\}.$$

\begin{prop}\label{xplusone} We have $\lambda_{x+1}=\exp(\lambda_x)$, and thus $\lambda_{x-1}=\log(\lambda_x)$.
\end{prop}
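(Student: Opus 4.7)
My plan is to compute $\exp(\lambda_x)$ explicitly via Lemma \ref{lem:xplusone} and then match the resulting cut against the defining cut of $\lambda_{x+1}$. The companion formula $\lambda_{x-1} = \log\lambda_x$ then follows immediately by substituting $x-1$ for $x$ and applying $\log$.

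Before matching cuts I would check that $\exp(\lambda_x)\in\fA$, so the proposed equality is even well-posed. Writing $\lambda_x = \omega^a$ with $a>0$ (using $\lambda_x\in\fA\subseteq\fM^{\succ 1}$), the identity $\exp(\omega^a) = \omega^{\omega^{g(a)}}$ shows $\exp(\lambda_x)\in\fM$, and its iterated logarithms $\lambda_x,\log\lambda_x,\log_2\lambda_x,\dots$ are all monomials since $\lambda_x$ is log-atomic. Combined with $\exp(\lambda_x)>\N$, this gives $\exp(\lambda_x)\in\fA$, so $\exp(\lambda_x)=\lambda_y$ for a unique $y\in\No$, and it remains to identify $y=x+1$.

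Applying Lemma \ref{lem:xplusone} to the defining (monomial) representation
\[
\lambda_x = \bigl\{k,\ \exp_j(k\log_j\lambda_{x'})\bigr\}\,\big|\,\bigl\{\exp_j(\tfrac{1}{k}\log_j\lambda_{x''})\bigr\}
\]
(with $x=\{x'\}|\{x''\}$ canonical) yields
\[
\exp(\lambda_x) = \bigl\{\lambda_x^n,\ \exp(k),\ \exp_{j+1}(k\log_j\lambda_{x'})\bigr\}\,\big|\,\bigl\{\exp_{j+1}(\tfrac{1}{k}\log_j\lambda_{x''})\bigr\}.
\]
On the other side, I would determine the canonical representation of $x+1$ and substitute it into the defining formula of \cite[Definition 5.12]{BM} to obtain a parallel cut for $\lambda_{x+1}$, whose left and right options are of the shape $\exp_j(k\log_j\lambda_z)$ and $\exp_j(\tfrac{1}{k}\log_j\lambda_z)$ for $z$ in the new left/right sets. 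A first sanity check: $\lambda_x^k = \exp_1(k\log_1\lambda_x)$, so the $j=1$ contribution coming from $x$ itself already matches the powers $\lambda_x^n$ appearing on the left of $\exp(\lambda_x)$.

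I expect the main obstacle to be twofold. First, the canonical representation of $x+1$ is \emph{not} obtained from that of $x$ by any naive surgery on the sign sequence (as the simple example $x=1/2$, $x+1=3/2$ with sign sequences $+-$ and $++-$ shows), so some care is needed to pin it down before the substitution. Second, even with both representations in hand, showing they present the same surreal requires proving mutual cofinality of the left sets and coinitiality of the right sets, which is a delicate combinatorial matching between the $x'$-towers lifted by $\exp$, i.e.\ $\exp_{j+1}(k\log_j\lambda_{x'})$, and the new $x$-towers $\exp_j(k\log_j\lambda_x)$. The key tool there is the identity $\exp_{j+1}(k\log_j y) = \exp_j((\log_{j-1}y)^k)$ for log-atomic $y$, combined with the strict dominance $\lambda_{x'}\prec\lambda_x$ in $\fA$ (valid whenever $x'<x$), which together let one climb through matching exp-tower levels on the two sides.
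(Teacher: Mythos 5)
Your overall strategy --- compute $\exp(\lambda_x)$ from the defining monomial representation of $\lambda_x$ via Lemma~\ref{lem:xplusone} and match it against a cut representation of $\lambda_{x+1}$ --- is indeed the paper's strategy. But the two difficulties you flag at the end are not side issues to be dispatched later; they are the substance of the proof, and the tools you propose for them do not suffice. First, you plan to ``determine the canonical representation of $x+1$'' and feed it into \cite[Definition~5.12]{BM}. The paper never does this (and, as your own example $x=1/2$ shows, there is no usable general description of that sign sequence). Instead it invokes the uniformity statement \cite[5.15]{BM}, which allows one to plug a \emph{non-canonical} representation of the index into the $\lambda$-cut, namely $x+1=\{x,\,x'+1\}\,|\,\{x''+1\}$, obtained directly from the addition formula applied to $1=0|\emptyset$. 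Without some such uniformity result your plan stalls at the very first step.

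Second, and more importantly, you omit the induction. The paper argues by induction on the simplicity of $x$, assuming $\lambda_{x'+1}=\exp(\lambda_{x'})$ and $\lambda_{x''+1}=\exp(\lambda_{x''})$ for all options $x'$, $x''$ of $x$; this is what turns the options $\exp_j\!\big(k\log_j(\lambda_{x'+1})\big)$ of $\lambda_{x+1}$ into $\exp_j\!\big(k\log_{j-1}(\lambda_{x'})\big)=\exp\big(\exp_{j-1}(k\log_{j-1}\lambda_{x'})\big)$, i.e.\ into $\exp$ applied to the options of $\lambda_x$ --- which is exactly what Lemma~\ref{lem:xplusone} produces on the other side, up to the harmless extra left options $k$ and $\exp_j\!\big(k\log_j(\lambda_x)\big)$, all of which are $\asymp^L\lambda_x\prec^L\exp(\lambda_x)$. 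Your proposed substitute --- the identity $\exp_{j+1}(k\log_j y)=\exp_j\big((\log_{j-1}y)^k\big)$ together with $\lambda_{x'}\prec\lambda_x$ --- only reshuffles towers built from $\lambda_{x'}$ and $\lambda_x$; it cannot relate them to towers built from $\lambda_z$ for the options $z$ of $x+1$, because that relation is an instance of the proposition itself at a simpler argument. So the argument as outlined has a genuine gap; with \cite[5.15]{BM} and the inductive hypothesis added, it closes and coincides with the paper's proof. (Your preliminary verification that $\exp(\lambda_x)\in\fA$ is correct but unnecessary once the cuts are matched.)
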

\begin{proof} Let $x=\{x'\}|\{x''\}$ be the canonical representation of $x$. Then $1=0|\emptyset$ gives $x+1=\{x, x'+1\}|\{x'' +1\}$.
Assume inductively that $\lambda_{x'+1}=\exp(\lambda_{x'})$
and $\lambda_{x''+1}=\exp(\lambda_{x''})$ for all $x'$ and $x''$. With $j$,~$k$ ranging over $\N^{\ge 1}$, \cite[5.15]{BM} gives
\begin{align*} \lambda_{x+1}\ &=\ \left\{k, \exp_j\!\big(k\log_j(\lambda_x)\big), 
\exp_j\!\big(k\log_j(\lambda_{x'+1})\big)\right\}\big|\left\{\exp_j\!\big(\textstyle\frac{1}{k}\log_j(\lambda_{x''+1})\big)\right\}\\
&=\ \left\{k, \exp_j\!\big(k\log_j(\lambda_x)\big), 
\exp_j\!\big(k\log_{j-1}(\lambda_{x'})\big)\right\}\big|\left\{\exp_j\!\big(\textstyle\frac{1}{k}\log_{j-1}(\lambda_{x''})\big)\right\}.
\end{align*}
The defining representation $\lambda_x=A|B$ is monomial, and the above gives
$\lambda_{x+1}=\N\cup S\cup\exp(A)|\exp(B)$ where $S$ includes
$\lambda_x^{\N}$ and all elements of $S$ are $\asymp^L \lambda_x$. Since $\lambda_x \prec^L \exp(\lambda_x)$, it follows easily from Lemma~\ref{lem:xplusone} that
$\lambda_{x+1}=\exp(\lambda_x)$. 
\end{proof}

\begin{lemma}\label{lambda2} We have $\lambda_{-\alpha}=\omega^{\omega^{-\alpha}}$.
\end{lemma}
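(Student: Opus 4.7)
The proof proceeds by transfinite induction on $\alpha$. I handle the three standard cases.

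For the base case $\alpha = 0$, the canonical representation $0 = \emptyset|\emptyset$ plugged into the BM defining representation of $\lambda_x$ gives $\lambda_0 = \{k\}_{k\geq 1}\,|\,\emptyset = \omega$, and indeed $\omega^{\omega^0} = \omega^1 = \omega$.

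For the successor case, assume $\lambda_{-\alpha} = \omega^{\omega^{-\alpha}}$. By Proposition~\ref{xplusone} (applied to $x = -\alpha$, in its $\lambda_{x-1} = \log \lambda_x$ form), we get
\[
\lambda_{-(\alpha+1)} \ =\ \lambda_{-\alpha-1}\ =\ \log \lambda_{-\alpha}\ =\ \log \omega^{\omega^{-\alpha}}\ =\ \omega^{\omega^{-(\alpha+1)}},
\]
where the last equality is the identity $\log\omega^{\omega^{-\alpha}} = \omega^{\omega^{-(\alpha+1)}}$ established in the preliminaries. Since $-\alpha\dminus 1 = -(\alpha+1)$ as surreals, this closes the successor step.

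The limit case is the main content. Let $\alpha$ be a limit ordinal and assume the claim for all $\beta<\alpha$. The canonical representation of $-\alpha$ is $\emptyset\,|\,\{-\beta:\beta<\alpha\}$, so the defining representation recalled before Proposition~\ref{xplusone} specializes to
\[
\lambda_{-\alpha}\ =\ \{k\}_{k\geq 1}\ \big|\ \big\{\exp_j\!\big(\tfrac{1}{k}\log_j\lambda_{-\beta}\big)\,:\,\beta<\alpha,\ j,k\geq 1\big\}.
\]
Using the inductive hypothesis and iterating $\log\omega^{\omega^{-\gamma}} = \omega^{\omega^{-(\gamma+1)}}$ yields $\log_j\lambda_{-\beta} = \omega^{\omega^{-(\beta+j)}}$. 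On the other hand, applying Gonshor's formula for the omega-map twice (first to $-\alpha = \emptyset|\{-\beta\}$ to get $\omega^{-\alpha} = \{0\}|\{\tfrac{1}{n}\omega^{-\beta}\}$, then to $\omega^{-\alpha}$) produces the monomial representation
\[
\omega^{\omega^{-\alpha}}\ =\ \{k\}_{k\geq 1}\ \big|\ \big\{\tfrac{1}{n}\omega^{\tfrac{1}{n'}\omega^{-\beta}}\,:\,\beta<\alpha,\ n,n'\geq 1\big\},
\]
so $\omega^{\omega^{-\alpha}}$ is the simplest surreal in this cut. Both cuts share left options $\N$, so it suffices to show their right option sets are mutually cofinal. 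The identity $\exp(\tfrac{1}{k}\omega^{\omega^{-(\beta+1)}}) = \omega^{\tfrac{1}{k}\omega^{-\beta}}$, obtained from $\exp(\omega^{\omega^{-(\beta+1)}}) = \omega^{\omega^{-\beta}}$ and $\exp(z/k) = \exp(z)^{1/k}$, identifies the $j=1$ $\lambda$-options with the monomial right options (after relabeling the parameters), while options with $j\geq 2$ produce still larger values bounded above by $j=1$ options for smaller $\beta$. Hence the two cuts define the same simplest element, proving the limit step.

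The main obstacle is precisely the cofinality/simplicity verification in the limit case. The $\lambda$-options for $j\geq 2$ involve $\exp$ of real multiples of monomials of the form $\omega^{\tfrac{1}{k}\omega^{-\gamma}}$, and the formula $\exp(\omega^z) = \omega^{\omega^{g(z)}}$ is not directly available there since $g$ is only explicit at special values. The plan is to sidestep the exact formula by bounding higher-$j$ options above by appropriate $j=1$ options (for a smaller $\beta$), thereby reducing the comparison to the base exponential identity — essentially using the same archimedean-class domination tricks that verify membership in the cut, together with the monotonicity of $\exp_j$.
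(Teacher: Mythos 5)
Your base and successor cases are exactly the paper's argument, and your limit case takes a genuinely different route: you compute a monomial representation of $\omega^{\omega^{-\mu}}$ by applying Gonshor's uniform formula for the omega-map twice and then compare the two cuts directly, whereas the paper first collapses the right options of $\lambda_{-\mu}$ to $\{\omega^{\omega^{-\beta}}:\beta<\mu\}$ via the $\asymp^L$/$\succ^L$ relations and then avoids any second monomial representation by a simplicity argument: $\lambda_{-\mu}\le_s\omega^{\omega^{-\mu}}$ because $\omega^{\omega^{-\mu}}$ lies in the cut, and conversely $\omega^{\omega^{-\mu}}\le_s\lambda_{-\mu}$ because $\fA\subseteq\omega^{\fM}$ lets one write $\lambda_{-\mu}=\omega^{\omega^{-a}}$ with $a>\beta$ for all $\beta<\mu$, whence $\mu\le_s a$. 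Your route is viable (modulo citing the uniformity property of the omega-map, which you use silently when you plug a non-canonical representation of $\omega^{-\mu}$ into Gonshor's formula), and your $j=1$ identification via $\exp\big(\tfrac1k\omega^{\omega^{-(\beta+1)}}\big)=\omega^{\frac1k\omega^{-\beta}}$ is sound.

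However, the step you yourself flag as the main obstacle is stated with the inequality in the wrong direction, and as written it does not close the argument. For mutual \emph{coinitiality} of the two right-option sets you must show that every option $\exp_j\big(\tfrac1k\log_j\lambda_{-\beta}\big)$ with $j\ge 2$ admits a right option of the monomial cut \emph{below} it. Bounding these options \emph{above} by $j=1$ options for smaller $\beta$ is true but irrelevant: an upper bound on a right option says nothing about whether the cut it defines is at least as tight. (Note also that for $k\ge 2$ the $j\ge 2$ options are \emph{smaller} than the corresponding $j=1$ options, not larger, since $(\log y)^{1/k}\prec\tfrac1k\log y$ for infinite $y$.) The correct fix is a lower bound for larger index: from $\exp(z)^{1/k}>z$ for infinite $z$ one gets $\exp_j\big(\tfrac1k z\big)=\exp_{j-1}\big(\exp(z)^{1/k}\big)>\exp_{j-1}(z)$, so with $z=\log_j\lambda_{-\beta}$,
$$\exp_j\big(\tfrac1k\log_j\lambda_{-\beta}\big)\ >\ \log\lambda_{-\beta}\ =\ \lambda_{-(\beta+1)}\ =\ \omega^{\omega^{-(\beta+1)}},$$
and $\omega^{\omega^{-(\beta+1)}}$ is a right option of your monomial cut since $\beta+1<\mu$. (This is precisely the content of the paper's observation that these options are $\asymp^L\lambda_{-\beta}\succ^L\lambda_{-\beta'}$ for $\beta'>\beta$.) With that one inequality reversed and justified, your limit case goes through.
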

\begin{proof} By induction on $\alpha$. The case $\alpha=0$ holds since $\lambda_0=\omega$. Assuming it holds for a certain
$\alpha$, we have 
$$\lambda_{-(\alpha+1)}\ =\  \log \lambda_{-\alpha}\ =\ \log \omega^{\omega^{-\alpha}}\ =\ \omega^{\omega^{-(\alpha +1)}}.$$
Next, let $\mu$ be an infinite limit ordinal. Then 
$-\mu=\emptyset|\{-\alpha:\ \alpha< \mu\}$, and so by \cite[5.15]{BM} and with $j$,~$k$ ranging over $\N^{\ge 1}$ we have 
$$\lambda_{-\mu}\ =\ \N\,\big|\left\{\exp_j\!\big(\textstyle\frac{1}{k}\log_j \lambda_{-\alpha}\big)\right\}.$$
Now  $\exp_j\!\big(\frac{1}{k}\log_j \lambda_{-\alpha}\big)\asymp^L \lambda_{-\alpha} \succ^L \lambda_{-\beta}$ when $\alpha < \beta$, so by cofinality and the inductive assumption we have 
$$\lambda_{-\mu}\ =\ 
\N\,\big|\big\{\omega^{\omega^{-\alpha}}:\ \alpha < \mu\big\}.$$
From $\N < \omega^{\omega^{-\mu}} < \omega^{\omega^{-\alpha}}$
for all $\alpha < \mu$, we get $\lambda_{-\mu}\le_s \omega^{\omega^{-\mu}}$. Take $a$ such that
$\lambda_{-\mu}=\omega^{\omega^{-a}}$. Then $\lambda_{-\mu} < \omega^{\omega^{-\alpha}}$ for $\alpha < \mu$ gives
$\omega^{-a} < \omega^{-\alpha}$ for all $\alpha < \mu$, and
thus $a> \alpha$ for all $\alpha < \mu$. This yields
$\mu \le_s a$, and thus $\omega^{\omega^{-\mu}}\le_s \lambda_{-\mu}$, hence $a=\mu$.
\end{proof}

\begin{lemma}\label{lambda2a} For $\lambda\in \fA$ we have:
$\lambda\ <\ \lambda_{-\alpha}\ \Longleftrightarrow\ 
\lambda_{-(\alpha+1)}\ \le_s\ \lambda$.
\end{lemma}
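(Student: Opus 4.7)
The plan is to transfer the lemma, via the order- and simplicity-preserving bijection $x\mapsto\lambda_x\colon\No\to\fA$, into an equivalent statement about arbitrary surreals and the ordinal $\alpha$, and then deduce that statement from the ordinal fact $\alpha<x\Longleftrightarrow\alpha\dot{+}1\le_s x$ recalled in the preliminaries.

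First I would write $\lambda=\lambda_y$ for the unique $y\in\No$ with $\lambda_y=\lambda$. Because $x\mapsto\lambda_x$ is order-preserving, $\lambda<\lambda_{-\alpha}$ is equivalent to $y<-\alpha$. Because the same map satisfies $\lambda_x\le_s\lambda_{y}\iff x\le_s y$, the condition $\lambda_{-(\alpha+1)}\le_s\lambda$ is equivalent to $-(\alpha+1)\le_s y$. Thus the lemma reduces to the assertion
\[
y<-\alpha\ \Longleftrightarrow\ -(\alpha+1)\le_s y.
\]

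To prove this reduced assertion I would apply the preliminary fact to the surreal $-y$ in place of $x$: namely $\alpha<-y\iff\alpha\dot{+}1\le_s -y$. Negation reverses the ordering of $\No$, so $\alpha<-y$ is the same as $y<-\alpha$. Negation also preserves length and merely swaps $-$ and $+$ in each sign sequence, so it preserves the ``simpler than'' partial order $\le_s$; hence $\alpha\dot{+}1\le_s -y$ is the same as $-(\alpha\dot{+}1)\le_s y$. Finally, $\alpha\dot{+}1=\alpha+1$ by the remark that field-addition and ordinal-addition agree on $\alpha$ and $n$, so $-(\alpha\dot{+}1)=-(\alpha+1)$. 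Combining these equivalences gives the reduced assertion and hence the lemma.

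No step here looks delicate: the map $x\mapsto\lambda_x$ being an order- and simplicity-isomorphism $\No\to\fA$ is already on record in the excerpt (taken from \cite[Sections 1,~5]{BM}), and the required behavior of $x\mapsto -x$ with respect to $<$ and $\le_s$ is immediate from the sign-sequence description of surreals. The only conceptual point worth highlighting is that one must not apply the ordinal fact directly to $-\alpha$ (which is not an ordinal) but rather apply it to the ordinal $\alpha$ together with the surreal $-y$, and then transport the conclusion back by negation.
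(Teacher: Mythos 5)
Your proposal is correct and matches the paper's own proof essentially step for step: the paper also writes $\lambda=\lambda_x$, passes through $x<-\alpha\iff\alpha<-x\iff\alpha+1\le_s-x\iff-(\alpha+1)\le_s x$ using the order- and simplicity-preserving properties of $x\mapsto\lambda_x$ and of negation, together with the fact $\alpha<x\iff\alpha\dot{+}1\le_s x$ from the preliminaries. Nothing further is needed.
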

\begin{proof} For $\lambda=\lambda_x$ we have the equivalences
\alignqed{ \lambda_x\ <\ \lambda_{-\alpha}\ 
&\Longleftrightarrow\ x\ <\ -\alpha\ 
\Longleftrightarrow\ \alpha\ <\ -x\ \Longleftrightarrow\ 
\alpha+1\ \le_s\ -x\\ 
&\Longleftrightarrow\ -(\alpha+1)\ \le_s\ x\ 
\Longleftrightarrow\ \lambda_{-(\alpha+1)}\ \le_s\ \lambda_x.}
\end{proof} 

\subsection*{Transfinitely iterating the logarithm function}
In view of $\lambda_{-n}=\log_n\omega$ and the proof of Lemma~\ref{lambda2} it is suggestive to think of $\lambda_{-\alpha}$ as the $\alpha$ times iterated function $\log$ evaluated at $\omega$. Accordingly we set 
$\log_{\alpha}\omega:= \lambda_{-\alpha}$.
 We note that for $\beta < \alpha$ we have $-\beta <_s -\alpha$, so $\omega^{-\beta} <_s \omega^{-\alpha}$, and thus 
 $\log_{\beta}\omega <_s \log_{\alpha}\omega$. 
 
\begin{lemma}\label{lambda2b} Suppose $\alpha$ is an infinite limit ordinal. Then $\log_{\alpha} \omega$ is the simplest surreal $x>\N$ such that $x<\log_{\beta}\omega$ for all $\beta<\alpha$. 
\end{lemma}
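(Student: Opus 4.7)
The plan is to observe that the required characterization is essentially already contained in the proof of Lemma \ref{lambda2}. Recall that in that argument, for an infinite limit ordinal $\mu$, the canonical representation $-\mu = \emptyset \,|\, \{-\beta : \beta < \mu\}$ combined with \cite[5.15]{BM} produced
\[
\lambda_{-\mu}\ =\ \N \,\big|\, \left\{\exp_j\!\big(\tfrac{1}{k} \log_j \lambda_{-\beta}\big)\right\},
\]
and then the cofinality argument $\exp_j(\tfrac{1}{k}\log_j \lambda_{-\beta}) \asymp^L \lambda_{-\beta} \succ^L \lambda_{-\beta'}$ for $\beta < \beta'$, together with the inductive identification $\lambda_{-\beta} = \omega^{\omega^{-\beta}}$, simplified this to
\[
\lambda_{-\mu}\ =\ \N \,\big|\, \big\{\omega^{\omega^{-\beta}} : \beta < \mu\big\}.
\]

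Applying this with $\mu = \alpha$ and using $\log_\beta \omega = \lambda_{-\beta} = \omega^{\omega^{-\beta}}$ (Lemma~\ref{lambda2}) gives
\[
\log_\alpha \omega\ =\ \N \,\big|\, \{\log_\beta \omega : \beta < \alpha\}.
\]
By the very definition of the notation $A \,|\, B$, this says precisely that $\log_\alpha \omega$ is the simplest surreal $x$ satisfying $n < x$ for every $n \in \N$ and $x < \log_\beta \omega$ for every $\beta < \alpha$. The first of these two conditions is just $x > \N$, which yields the statement of the lemma.

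There is no real obstacle: the work was done inside Lemma~\ref{lambda2}, and the present lemma is a repackaging of the representation produced there. The only care needed is to note that the first slot of the representation is $\N$ (not just $\{n : n < \text{something}\}$), which is guaranteed by the fact that $-\mu$ has no predecessors in its canonical left option set when $\mu$ is an infinite limit ordinal, so that the left options of $\lambda_{-\mu}$ given by \cite[5.15]{BM} reduce exactly to the natural numbers $k$.
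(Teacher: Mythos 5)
Your proof is correct, but it takes a genuinely different route from the paper's. You extract the representation $\lambda_{-\mu}=\N\,\big|\,\big\{\omega^{\omega^{-\beta}}:\beta<\mu\big\}$ that was established (via \cite[5.15]{BM} and the cofinality argument) inside the proof of Lemma~\ref{lambda2}, and observe that, under the paper's convention that $A|B$ denotes the \emph{simplest} surreal lying strictly between $A$ and $B$, this display \emph{is} the statement of Lemma~\ref{lambda2b}. That is a legitimate and economical shortcut; its only cost is that it leans on an intermediate fact that appears in the body of another proof rather than in a stated lemma, and on the correctness of the cofinality/coinitiality replacement of the right options there. The paper instead gives a self-contained argument that never revisits the defining representation of $\lambda_{-\mu}$: it lets $x$ be the simplest surreal $>\N$ below all $\log_\beta\omega$, shows $x$ is the simplest positive element of its archimedean class (hence $x=\omega^{y}$), repeats the argument for $y$ (hence $y=\omega^{z}$ with $z<-\beta$ for all $\beta<\alpha$, so $-\alpha\le_s z$), concludes $\log_\alpha\omega\le_s x$, and combines this with the observation that $\log_\alpha\omega$ itself satisfies the defining conditions to force equality. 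The paper's route buys independence from \cite[5.15]{BM} and illustrates the ``peel off archimedean classes'' technique; yours buys brevity by recognizing the lemma as a repackaging of work already done.
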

\begin{proof} First, $\N<\log_{\alpha}\omega < \log_{\beta}\omega$ for all $\beta<\alpha$. Let $x$ be the simplest surreal $>\N$ such that $x< \log_{\beta}\omega$ for all $\beta<\alpha$.
Then $x$ is the simplest positive element in its archimedean class, so $x=\omega^y$ with $y>0$. Then $x=\omega^y < \omega^{\omega^{-\beta}}$ for $\beta< \alpha$ gives 
$y < \omega^{-\beta}$ for all $\beta< \alpha$. Then $y$ is the simplest positive element in its archimedean class:  if $0<y_0\le_s y$ and $y_0\le ny$, then $\omega^{y_0}\le_s \omega^y=x$ and $\N <\omega^{y_0}\le x^n < \log_{\beta}\omega$ for all $\beta<\alpha$, so $\omega^{y_0}=\omega^y$, and thus $y_0=y$. Hence $y=\omega^z$ with $z<-\beta$ for all $\beta<\alpha$, and thus $z\le -\alpha \le_s z$. Therefore, $\omega^{-\alpha}\le_s \omega^z=y$, so
$$ 
\log_{\alpha}\omega\ =\ \omega^{\omega^{-\alpha}}\ \le_s\ \omega^y\ =\ x,$$
and thus $\log_{\alpha}\omega = x$.
\end{proof}

\noindent
The surreals $\log_{\alpha}\omega$ occur in the definition of
$\der_{\BM}$ later in this section.

\subsection*{The $\kappa$-numbers}
The definition of $\der_{\BM}$ in \cite{BM} also involves
the surreals $\kappa_x\in \fA$ defined by
Kuhlmann and Matusinski~\cite{KM}. This is only needed
for $x=-\alpha$, and it follows from the results in \cite{KM} that 
$\kappa_{-\alpha}=\omega^{\omega^{-\omega\alpha}}$, where $\omega\alpha$ is the 
usual ordinal product.
Thus in view of Lemma~\ref{lambda2}:

\begin{cor}\label{lambda3} We have $\kappa_{-\alpha}\ =\ \lambda_{-\omega\alpha}\ =\ \omega^{\omega^{-\omega\alpha}}\ =\ \log_{\omega\alpha}\omega$.
\end{cor}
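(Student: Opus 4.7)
My plan is to verify the three equalities by chaining together the results already available, with no new computation beyond a direct substitution.

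First, I would invoke the result of Kuhlmann and Matusinski \cite{KM} that is quoted in the paragraph immediately preceding the corollary, namely $\kappa_{-\alpha}=\omega^{\omega^{-\omega\alpha}}$, where $\omega\alpha$ is ordinary ordinal multiplication. This is cited as an input, not proved here, so I would just state it and refer to \cite{KM}. This handles the first equality $\kappa_{-\alpha}=\omega^{\omega^{-\omega\alpha}}$ once I reorganize the chain.

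Second, I would apply Lemma~\ref{lambda2} with $\omega\alpha$ substituted in place of the generic ordinal variable. Since $\omega\alpha$ is itself an ordinal, the lemma directly yields $\lambda_{-\omega\alpha}=\omega^{\omega^{-\omega\alpha}}$. Combining this with the Kuhlmann--Matusinski identity gives $\kappa_{-\alpha}=\lambda_{-\omega\alpha}$, which is the first displayed equality of the corollary, and simultaneously supplies the middle expression $\omega^{\omega^{-\omega\alpha}}$.

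Finally, the fourth expression $\log_{\omega\alpha}\omega$ is handled by pure unravelling of notation: in the ``Transfinitely iterating the logarithm function'' subsection the paper defines $\log_\beta\omega:=\lambda_{-\beta}$ for any ordinal $\beta$, so specializing to $\beta=\omega\alpha$ gives $\log_{\omega\alpha}\omega=\lambda_{-\omega\alpha}$, closing the chain.

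There is no genuine obstacle here; the corollary is a bookkeeping consequence of Lemma~\ref{lambda2}, the notational convention $\log_\beta\omega=\lambda_{-\beta}$, and an external computation from \cite{KM}. The only thing to be careful about is not to confuse the surreal product $\omega\cdot\alpha$ with the ordinal product $\omega\alpha$ in the exponent $-\omega\alpha$; the paper explicitly notes that ordinal multiplication is intended, and Lemma~\ref{lambda2} does not care which ordinal it is applied to, so the substitution is legitimate.
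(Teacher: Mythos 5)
Your proposal is correct and matches the paper's own justification exactly: the paper states the Kuhlmann--Mantusinski identity $\kappa_{-\alpha}=\omega^{\omega^{-\omega\alpha}}$ in the paragraph before the corollary, then obtains the chain ``in view of Lemma~\ref{lambda2}'' together with the convention $\log_{\beta}\omega:=\lambda_{-\beta}$. Nothing further is needed.
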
 

\noindent
We also use the binary relations $\preceq^K$, $\succ^K$, and $\asymp^K$ on $\No^{>\N}$ defined by
\begin{align*} x\ \preceq^K\ y&\quad\Longleftrightarrow\quad  \text{$x \le \exp_n(y)$  for some $n$,}\\
x\ \succ^K\ y &\quad\Longleftrightarrow\quad \text{$x > \exp_n(y)$  for all $n$,}\\
x\ \asymp^K\ y &\quad\Longleftrightarrow\quad \text{$x \preceq^K y$ and $y \preceq^K x$.}
\end{align*} 
We refer to \cite[5.3]{BM} for proofs of some basic facts about these relations and the $\kappa_x$ such as: $\asymp^K$ is an equivalence relation on $\No^{>\N}$ with convex equivalence classes, every $\asymp^K$-equivalence class has a unique element $\kappa_x$ in it, and this element is the simplest element of this equivalence class. Also,
$\kappa_x\le_s \kappa_y$ iff $x\le_s y$.

\subsection*{Defining the pre-derivation for $\der_{\BM}$} The pre-derivation $D$ with $\der_D=\der_{\BM}$ is denoted by $\der_{\mathbb{L}}$ in
\cite[Definition 6.7]{BM}, and by $\der_{\fA}$ in this paper. It is given by
$$\der_{\fA}(\lambda)\ :=\ \prod_n\log_n \lambda\bigg/\prod_{\alpha} \log_{\alpha}\omega$$
with $\alpha$ in the denominator ranging over the ordinals
such that $\log_{\alpha} \omega\ge \log_n\lambda$ for some $n$; to facilitate comparison with \cite{BM} we note that this condition on $\alpha$ is equivalent to $\lambda \preceq^K \log_{\alpha}\omega$. (The products on the right exist, since 
$\log_n\lambda$ and $\log_{\alpha}\omega $ are strictly decreasing as functions of $n$ and $\alpha$, respectively.) The above defining identity for $\der_{\fA}$ simplifies the expression in \cite{BM} by our use of infinite products
(instead of exponentials of infinite sums), and of Lemma~\ref{lambda2} and
Corollary~\ref{lambda3} (to get rid of $\kappa$-numbers). 
As  \cite[Section 9]{BM} shows,  $\der_{\fA}$ is in a certain technical sense  
the {\em simplest\/} pre-derivation.

If $\lambda> \exp_n\omega$
for all $n$, then $\der_{\fA}(\lambda)=\prod_n \log_n \lambda$. Another special case is 
$\der_{\fA}(\log_{\alpha}\omega) = 1\big/\prod_{\beta< \alpha} \log_{\beta} \omega$, in particular, $\der_{\fA}(\omega)=1$. For $\epsilon$-numbers we get the following 
(not needed later, but included as an example): 

\begin{lemma}\label{logeps} We have $\log_n\epsilon=\omega^{\omega^{\epsilon - n}}$. Hence $\epsilon\in \fA$ and $$\der_{\fA}(\epsilon)\  =\ \omega^{\omega^\epsilon + \omega^{\epsilon-1}+ \omega^{\epsilon -2} + \cdots}\ =\ \omega^{\epsilon/(1-\omega^{-1})}.$$
\end{lemma}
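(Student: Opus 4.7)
My plan is to prove the formula $\log_n\epsilon = \omega^{\omega^{\epsilon-n}}$ by induction on $n$; the log-atomicity of $\epsilon$ and the closed form for $\der_\fA(\epsilon)$ will follow routinely from it.

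The base case $n=0$ uses the defining identity $\omega^\epsilon=\epsilon$ twice: $\log_0\epsilon = \epsilon = \omega^\epsilon = \omega^{\omega^\epsilon}$. For the inductive step, assuming $\log_n\epsilon = \omega^{\omega^{\epsilon-n}}$, we have $\log_{n+1}\epsilon = \log\omega^{\omega^{\epsilon-n}}$, and the goal is to identify this with $\omega^{\omega^{\epsilon-n-1}}$. Using the identity $\exp(\omega^x) = \omega^{\omega^{g(x)}}$ recalled in Section~\ref{sec:pre}, this reduces to showing
$$g\bigl(\omega^{\epsilon-n-1}\bigr)\ =\ \epsilon-n.$$
I expect this Gonshor computation at the non-ordinal surreal argument $\omega^{\epsilon-n-1}$ to be the main technical obstacle; it is the analogue, at a large surreal exponent, of the identity $g(\omega^{-\alpha}) = -\alpha+1$ used in the proof of Lemma~\ref{lambda2}. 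It can be extracted from the sign-sequence analysis of $g$ in \cite{G} by noting that the unique positive surreal mapping to $\epsilon$ under $g$ is $\omega^{\epsilon-1}$, and then iterating the same reasoning.

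Once the formula is established, $\epsilon\in\fA$ is immediate since each $\log_n\epsilon = \omega^{\omega^{\epsilon-n}}$ lies in $\fM$. To compute $\der_\fA(\epsilon)$, I first verify that $\epsilon > \exp_n\omega$ for all $n$: by induction via $\exp(\omega^y)=\omega^{\omega^{g(y)}}$ and $g(\omega_{n-1})=\omega_{n-1}$ (ordinals below $\epsilon_0$ are fixed by $g$) one gets $\exp_n\omega = \omega_n$, and $\omega_n < \epsilon_0 \leq \epsilon$. Hence no ordinal $\alpha$ satisfies $\epsilon\preceq^K\log_\alpha\omega$, so the denominator in the defining identity of $\der_\fA$ is empty and
$$\der_\fA(\epsilon)\ =\ \prod_n \log_n\epsilon\ =\ \prod_n \omega^{\omega^{\epsilon-n}}\ =\ \omega^{\sum_n \omega^{\epsilon-n}}.$$
Finally, $\omega^{\epsilon-n} = \omega^\epsilon\cdot\omega^{-n} = \epsilon\cdot\omega^{-n}$, and summability of $(\omega^{-n})_n$ together with the geometric series identity yields $\sum_n \omega^{\epsilon-n} = \epsilon/(1-\omega^{-1})$, completing the computation.
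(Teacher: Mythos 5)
Your proposal is correct and follows essentially the same route as the paper: an induction on $n$ whose inductive step reduces, via $\exp(\omega^x)=\omega^{\omega^{g(x)}}$, to the identity $g\big(\omega^{\epsilon-(n+1)}\big)=\epsilon-n$, with the derivative computation then falling out of the empty denominator and the geometric series. The only difference is that where you say this Gonshor computation can be "extracted by iterating the same reasoning," the paper pins it down by quoting the explicit sign-sequence fact from \cite[pp.~179,~180]{G} (namely that $\epsilon$ followed by $\epsilon\omega n$ minuses is $\omega^{\epsilon-n}$ and has $g$-value $\epsilon-(n-1)$), which you would need to cite or reprove to close that step.
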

\begin{proof} From \cite[pp. 179, 180]{G} we get that if $b$, as a 
sequence of pluses and minuses, equals 
$\epsilon$ followed by $\epsilon\omega n$ minuses, with $n\ge 1$ and 
$\epsilon \omega n$ being the ordinal product,
then $b=\omega^{\epsilon-n}$, and $g(b)=\epsilon-(n-1)$. In other words,
$$ g\big(\omega^{\epsilon-n}\big)\ =\ \epsilon-(n-1)\ \qquad(n\ge 1).$$ 
 Using this we prove the lemma by induction on $n$. The case $n=0$ is clear. 
Assume inductively that $\log_n\epsilon=\omega^{\omega^{\epsilon - n}}$. 
Since $g\big(\omega^{\epsilon-(n+1)}\big)=\epsilon-n$, this gives 
$$\exp\big(\omega^{\omega^{\epsilon-(n+1)}}\big)\ =\ \omega^{\omega^{\epsilon - n}},$$
from which we get $\log_{n+1}\epsilon=\omega^{\omega^{\epsilon - (n+1)}}$, as desired. 
\end{proof}

\section{Exhibiting $\No$ as a Suitable Directed Union}\label{sec:dirun}

\noindent
At the end of Section~\ref{sec:pre} we explained how proving
$\T \equiv \No$ (as differential fields) reduces to representing
$\No$ as a directed union of spherically complete grounded $H$-subfields. In this section we obtain such a representation. The reader should beware of considering $\No$ itself as spherically complete, even though the Conway normal form is sometimes summarized as ``$\No=\R\(( \omega^{\No}\)) $''. This is misleading, however, since it suggests that a series like $\sum_\alpha \omega^{-\alpha}$,
where the sum is over all ordinals $\alpha$, is a surreal number. It might perhaps be viewed as a surreal number in a
strictly larger set-theoretic universe, but not in the one we are (tacitly) working in. A better way of understanding
$\No$ as a valued field is as the directed union $\bigcup_{\Gamma} \R[[\omega^\Gamma]]$ with~$\Gamma$ ranging over the subsets of $\No$ that
underly an additive subgroup of $\No$; for example, any $\alpha$ gives $\No(\omega^\alpha)$ as such a $\Gamma$.  
For any such $\Gamma$ the corresponding~$\R[[\omega^\Gamma]]$ is indeed a spherically complete valued subfield of $\No$, but in general $\R[[\omega^\Gamma]]$ is
not closed under
$\der_{\BM}$, and even if it is, it might not be grounded.

In this section we show that for $S=\No(\epsilon)\cup\{-\epsilon\}$, with $\epsilon$ any $\epsilon$-number, the Hahn subgroup
$\Gamma=\R[[\omega^S]]$ of $\No$ gives rise to a spherically complete valued subfield $\R[[\omega^\Gamma]]$ that is closed under $\der_{\BM}$ and grounded as an $H$-subfield of $\No$.

\subsection*{A length bound for $h$} This very useful bound is as follows:

\begin{lemma}\label{3} $l\big(h(y)\big)\ \le\ \omega^{l(y)+1}$.
\end{lemma}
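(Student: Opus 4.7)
The plan is to proceed by transfinite induction on $\mu := l(y)$. Write $y = \{y'\}|\{y''\}$ in canonical representation, so every option $y'$, $y''$ has length strictly less than $\mu$. The inductive hypothesis then supplies $l(h(y')) \le \omega^{l(y')+1} \le \omega^{\mu}$ and similarly for $l(h(y''))$ (with strict inequality when $\mu$ is a limit, and equality possible only when $\mu$ is a successor).

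The heart of the argument is Step 1: extracting from Gonshor's inductive construction of $g$ a cut description of $h$. Since $g\colon \No^{>} \to \No$ is a strictly order-preserving bijection, any such description for $h(y)$ must satisfy $\{h(y')\} < h(y) < \{h(y'')\}$; in the ``regular'' region where $g$ acts as the identity, this cut, together with the constraint that $h(y) > 0$, already pins down $h(y)$ as the simplest positive element. In the exceptional regions identified by Gonshor's Theorems~10.14 and~10.15---namely the intervals $[\epsilon,\epsilon+\omega)$ around $\epsilon$-numbers, and near $0$ where $g(\omega^{-\alpha}) = -\alpha+1$---the cut must be augmented with additional ``boundary'' surreals, chiefly monomials of the form $\omega^{-\beta}$ with $\beta \le \mu$.

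Step 2 is to bound the lengths of these boundary surreals. By Gonshor's formula for the sign sequence of $\omega^{z}$ (\cite[Chapter 5]{G}), a monomial $\omega^{-\beta}$ with $\beta\le \mu$ has length at most $\omega^{\beta+1}\le\omega^{\mu+1}$. Combining this with the bounds from the inductive hypothesis, every element appearing in the cut for $h(y)$ has length at most $\omega^{\mu+1}$.

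Step 3 concludes via the simplicity theorem: the simplest surreal in a cut $L \mid R$ has length bounded by the least ordinal strictly exceeding $l(a)$ for every $a\in L\cup R$. This gives $l(h(y))\le \omega^{\mu+1} = \omega^{l(y)+1}$, completing the induction. The principal obstacle is Step 1---obtaining a clean and verifiably correct cut description of $h(y)$ in the exceptional regions of $g$, where one must carefully identify which correction surreals belong to each side. Once this is pinned down, Steps 2 and 3 are routine ordinal arithmetic.
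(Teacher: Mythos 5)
Your overall architecture (transfinite induction on $l(y)$, a cut representation of $h(y)$, length bounds on the cut elements, then the simplicity theorem) matches the paper's, but the one step you yourself flag as ``the principal obstacle'' --- Step 1, producing the actual cut description of $h(y)$ --- is exactly the step you have not done, and your guess at its shape is wrong. There is no need to re-derive $h$ from the exceptional-region analysis of $g$ in Gonshor's Theorems~10.14 and~10.15: Gonshor defines $h$ directly by the recursion (see \cite[p.~172]{G})
$$h(y)\ =\ \big\{0,\ h(y')\big\}\,\big|\,\big\{h(y''),\ \omega^y/2^n\big\},$$
where $y=\{y'\}|\{y''\}$ is the canonical representation and $n$ ranges over $\N$. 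The extra right-hand options are the surreals $\omega^y/2^n$, not monomials $\omega^{-\beta}$ with $\beta\le l(y)$ as you conjecture; and they are needed for \emph{all} $y$, not only in ``exceptional regions.'' Without the correct option sets the length estimate cannot be carried out, so as written the proof does not go through.

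Once the correct representation is in hand, the rest is as you describe: induction bounds $l(h(y'))\le\omega^{l(y')+1}<\omega^{l(y)+1}$ (and likewise for $y''$), and the new options satisfy $l(\omega^y/2^n)\le l(\omega^y)\,l(1/2^n)<\omega^{l(y)}\cdot\omega=\omega^{l(y)+1}$ by \cite[Lemmas~3.6 and~4.1]{DE}; then \cite[Theorem~2.3]{G} gives $l(h(y))\le\omega^{l(y)+1}$. Note also a small slip in your Step~3: to conclude $l(h(y))\le\omega^{\mu+1}$ from the simplicity theorem you need the options to have length \emph{strictly} less than $\omega^{\mu+1}$, whereas you only claim ``at most $\omega^{\mu+1}$'' for your boundary surreals; the correct options do satisfy the strict bound.
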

\begin{proof} By \cite[p.~172]{G} the canonical representation
$y=\{y'\}|\{y''\}$ yields
$$h(y)\ =\ \big\{0, h(y')\big\}\big|\big\{h(y''), \omega^y/2^n\big\}.$$
We can assume inductively that the lemma holds for the $y'$ and $y''$ instead of $y$,  
and thus
$l\big(h(y')\big) \le\ \omega^{l(y')+1} < \omega^{l(y)+1}$ for all $y'$, and 
likewise with $y''$ instead of $y'$.
Also, $l(\omega^y/2^n) \le l(\omega^y)l(1/2^n)< \omega^{l(y)}\omega=\omega^{l(y)+1}$, using \cite[Lemmas 3.6 and 4.1]{DE}. Now appeal to \cite[Theorem 2.3]{G}.   
\end{proof}

\noindent
Recall from Section~\ref{sec:pre} that $h(-\alpha)=\omega^{-(\alpha+1)}$, and so $h(0)=\omega^{-1}$ shows that for $y=0$ the upper bound in Lemma~\ref{3} is attained.

\subsection*{Some spherically complete initial subfields of $\No$} In this subsection we fix an initial subset $I$ of $\No$. Then $\Gamma:=\R[[\omega^I]]$ is an initial additive subgroup of $\No$ by the proof of Theorem~18 in \cite{Eh}. (That theorem considers Hahn fields rather than the Hahn group $\Gamma$, but the same ideas work; we stress that it is the proof of that theorem rather than its statement that matters here.) Moreover,
as Philip Ehrlich mentioned to one of us: 

\begin{lemma} Suppose $I$ has a least element $a$. Then $a=-\alpha$ for some $\alpha$, and $\Gamma$ has a least nontrivial 
archimedean class 
represented by $\omega^{a}$. 
\end{lemma}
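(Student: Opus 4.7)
The plan is to handle the two assertions of the lemma in turn. First I would show that the least element $a$ of $I$ has the form $-\alpha$ for some ordinal $\alpha$; then I would show that $[\omega^a]$ is the least nontrivial archimedean class of $\Gamma$. Since the second claim is essentially a standard Hahn-group computation, the only place really requiring care is the first step, where the initiality hypothesis and the convention for lexicographic comparison of surreals of different lengths must be invoked correctly.

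For the first claim I would argue by contradiction: suppose $a$ has at least one $+$ sign in its sign-sequence, and let $\beta < l(a)$ be the least position with $a(\beta)={+}$. Then $a|_\beta$ consists of $\beta$ minus signs, so $a|_\beta = -\beta$ as a surreal. It is a proper initial segment of $a$, hence $-\beta <_s a \in I$, so $-\beta \in I$ by initiality. Using the convention that a surreal takes value $0$ beyond its length, lexicographic comparison at position $\beta$ gives $(-\beta)(\beta) = 0 < {+} = a(\beta)$, so $-\beta < a$, contradicting the minimality of $a$. Hence the sign-sequence of $a$ contains no $+$ signs, and $a = -\alpha$ for $\alpha := l(a)$.

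For the second claim, I would appeal directly to the Conway normal form. Any nonzero $\gamma \in \Gamma = \R[[\omega^I]]$ has reverse well-ordered support $E(\gamma) \subseteq I$, so $x^* := \max E(\gamma) \in I$ exists. The leading term $\gamma_{x^*}\omega^{x^*}$ dominates the rest, giving $|\gamma| \asymp \omega^{x^*}$ in the valuation on $\No$, which translates into the archimedean equality $[\gamma] = [\omega^{x^*}]$ inside $\Gamma$. Monotonicity $x < y \Rightarrow [\omega^x] < [\omega^y]$ holds because $\omega^{y-x}$ is then a positive infinite surreal, whence $n\omega^x < \omega^y$ for every $n \geq 1$. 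Since $x^* \geq a = \min I$, this yields $[\gamma] = [\omega^{x^*}] \geq [\omega^a]$, so $[\omega^a]$ is the least nontrivial archimedean class of $\Gamma$, as claimed.
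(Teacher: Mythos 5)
Your proof is correct and follows essentially the same route as the paper: the key step is identifying the longest initial run of minus signs in $a$ as a surreal $-\beta$, using initiality of $I$ to place it in $I$, and the lexicographic convention $0<{+}$ to derive a contradiction with minimality (the paper phrases this directly rather than by contradiction). The second claim, which the paper leaves implicit as a standard leading-monomial computation in the Hahn group $\R[[\omega^I]]$, you verify explicitly and correctly.
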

\begin{proof} Taking the longest initial segment of $a$ consisting of minus signs
we get the largest ordinal $\alpha$ with $-\alpha\le_s a$. Then $-\alpha\in I$
and $-\alpha\le a$, so $-\alpha=a$. 
\end{proof} 

\noindent
Since $\Gamma$ is initial and an ordered additive group it leads to the initial subfield $K:=\R[[\omega^\Gamma]]$ of $\No$. Note that $K$ is
spherically complete, and if $(a_i)$ is a family in~$K$ for which $\sum_i a_i$ exists, then $\sum_i a_i\in K$. Now $\Gamma=\R[[\omega^I]]$ is also closed
under infinite sums, so if $(\frak{m}_i)$ is a family in $\frak{M}\cap K$ 
such that $\prod_i \frak{m}_i$ exists,
 then $\prod_i \frak{m}_i\in K$. Thus $K$ is closed under infinite sums, and also under infinite products
 of monomials.
This is very useful in showing that for suitable choices of $I$ the field~$K$ is closed under certain surreal derivations. Note however, that if $I$ has a least element, then $K^{>\N}$ is
not closed under $\log$: if $-\alpha$ is the least element of $I$, then $\log_{\alpha}\omega=\omega^{\omega^{-\alpha}}\in K$, but
$\log_{\alpha+1}\omega\notin K$, as $-(\alpha+1)\notin I$.

In order to discuss examples we set  $a^r:=\exp(r\log a)$ for $a>0$ and $r\in \R$, and note agreement with the previously defined $\omega^r$ when $a=\omega$. 
Moreover,
$$(\log_{\alpha}\omega)^r\ =\ \omega^{r\omega^{-\alpha}} \qquad(r\in \R),$$
by the definition of $a^r$, using also $g\big(\omega^{-(\alpha+1)}\big)=-\alpha$ and \cite[Theorem 10.13]{G}.

\begin{examples} For $I=\{0\}$ we get $\Gamma=\R$ and 
$K=\R[[\omega^\R]]$; note that $K$ is closed under~$\der_{\BM}$, but $\omega\in K$ and $\log \omega=\omega^{1/\omega}\notin K$.

For $I=\{0,-1\}$ we have $\Gamma=\R + \R\omega^{-1}$, so $\omega^\Gamma=\omega^{\R}(\log \omega)^{\R}$, and thus $K=\R[[\omega^{\R}(\log \omega)^{\R}]]$, which
is again closed under $\der_{\BM}$. 

Let $I=\{\alpha:\ \alpha\le \epsilon\}$.
Then $\epsilon=\omega^{\omega^\epsilon}\in K$, but Lemma~\ref{logeps} gives $\log \epsilon\notin K$, since $\epsilon -1\notin I$ and so
$\omega^{\epsilon-1}\notin \Gamma$. Likewise we get $\der_{\BM}(\epsilon)\notin K$. 
\end{examples}

\begin{lemma}\label{3a} If $I=\big\{a:\ l(a)< \alpha\big\}$ or $I=\big\{a:\ l(a) \le \alpha\big\}$, then $I\subseteq \Gamma \subseteq K$.
\end{lemma}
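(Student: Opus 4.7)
The plan is to first reduce $\Gamma\subseteq K$ to $I\subseteq \Gamma$: if $a\in \Gamma=\R[[\omega^I]]$ then $E(a)\subseteq I$, and once we know $I\subseteq \Gamma$ we get $E(a)\subseteq \Gamma$, so $a\in \R[[\omega^\Gamma]]=K$. Thus the content of the lemma is $I\subseteq \Gamma$.

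For $I\subseteq \Gamma$, given $a\in I$ what is needed is $E(a)\subseteq I$. Since $I$ is characterized by a bound on $l(a)$, this reduces to the uniform inequality $l(x)\le l(a)$ for every $x\in E(a)$. My strategy is to combine two standard ingredients. First, every truncation $\sum_{x\ge x'}a_x\omega^x$ of the Conway normal form of $a$ is a $<_s$-initial segment of $a$; this is the key structural fact in the proof of Ehrlich's \cite[Theorem~18]{Eh} that was just invoked above to obtain the initiality of $\Gamma=\R[[\omega^I]]$. Consequently the length of such a truncation is at most $l(a)$. Second, Gonshor's sign-sequence analysis of the omega map in \cite[Ch.~5]{G} gives $l(x)\le l(\omega^x)$, and more generally $l(\omega^{x'})\le l(s)$ whenever $s$ is a nonzero surreal whose Conway normal form has $\omega^{x'}$ as its leading monomial.

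Combining these at a given $x'\in E(a)$: set $s:=\sum_{x\ge x'}a_x\omega^x$. The first fact yields $l(s)\le l(a)$, and the second yields $l(x')\le l(\omega^{x'})\le l(s)\le l(a)$. Running this over all $x'\in E(a)$ gives the required uniform bound and hence $I\subseteq \Gamma$.

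The main obstacle I anticipate is the second ingredient, specifically the sign-sequence comparison $l(\omega^{x'})\le l(s)$ for a surreal $s$ whose leading monomial is $\omega^{x'}$ but whose leading real coefficient $a_{x'}$ is arbitrary and which has lower-order contributions. Gonshor's explicit formulas for sign sequences of $\omega^x$ and of products with real scalars supply what is needed in principle, but piecing them together cleanly (so as to absorb both the scalar $a_{x'}$ and the tail of the Conway normal form) is the one part of the argument that calls for genuine care.
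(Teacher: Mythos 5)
Your overall skeleton is exactly the paper's: both directions reduce to $I\subseteq\Gamma$, which in turn reduces to the single inequality $l(x)\le l(a)$ for every $x\in E(a)$, obtained from the chain $l(x)\le l(\omega^x)\le l(a)$. The paper simply cites \cite[Lemmas~3.4, 4.1, and~4.2]{DE} for this chain, whereas you try to rederive the middle inequality $l(\omega^{x})\le l(a)$ from two more primitive ingredients, and it is there that your argument has a genuine gap.

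The problem is a mismatch between your two ingredients. Your truncation fact concerns the \emph{head} $s=\sum_{x\ge x'}a_x\omega^x$, which is indeed $\le_s a$, so $l(s)\le l(a)$. But the leading monomial of $s$ is $\omega^{\max E(a)}$, not $\omega^{x'}$: in $s$ the monomial $\omega^{x'}$ is the \emph{trailing} one. So your second ingredient, stated for surreals with leading monomial $\omega^{x'}$, does not apply to $s$, and the combination as written only ever proves $l(\omega^{\max E(a)})\le l(a)$ --- it never reaches the other exponents, which are the ones actually needed to conclude $E(a)\subseteq I$. You cannot repair this by truncating "from the other end," since tails of the normal form are not initial segments. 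What is really required is the statement that $l(\omega^{x'})\le l(a)$ for an \emph{arbitrary} $x'\in E(a)$; this is true, but it is not a formal consequence of the leading-term case plus initiality of truncations --- it rests on Gonshor's full sign-sequence juxtaposition analysis of Conway normal forms (the content of \cite[Theorem~5.12]{G} and of the lemmas of \cite{DE} that the paper cites), where one must check that the length contributed by the block of the term $a_{x'}\omega^{x'}$, even after the reduction relative to the earlier exponents, together with the earlier blocks still dominates $l(\omega^{x'})$. So the step you flagged as merely "calling for care" is in fact the entire content of the lemma, and your proposed decomposition does not supply it.
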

\begin{proof} Suppose $I=\big\{a:\ l(a) < \alpha\big\}$. 
(The case $I=\big\{a:\ l(a) \le \alpha\big\}$ is handled in the same way.) Let $a\in I$. Then $a=\sum_x a_x\omega^x$, and if $x\in E(a)$, then $l(x)\le l(\omega^x)\le l(a) < \alpha$ by \cite[Lemmas~3.4,~4.1, and~4.2]{DE}, so $x\in I$. 
Thus $a\in \Gamma$.
This proves $I\subseteq \Gamma$.
Next, if $b\in \Gamma$, then $b=\sum_{x\in I}b_x\omega^x$, and 
so $b\in K$ in view of $I\subseteq \Gamma$. 
\end{proof}

\noindent
The next lemma will also be crucial:

\begin{lemma}\label{2} Suppose $h(I)\subseteq \Gamma$. Then $\log K^{>}\subseteq K$ and for each $a\in K$ and term~$t$ of $a$ we have: $t$ and all terms of $\ell(t)$
lie in $K$. 
\end{lemma}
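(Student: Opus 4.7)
The plan is to decompose any $a\in K^{>}$ via its leading monomial and reduce everything to the identity $\log\omega^{\omega^y}=\omega^{h(y)}$ from Section~\ref{sec:pre}. I would write $a=r\,\omega^c(1+\delta)$ with $r\in\R^{>}$, $c=\max E(a)\in\Gamma$, and $\delta\prec 1$ in $K$, so that $\log a=\log r+\log\omega^c+\log(1+\delta)$; the term $\log r\in\R\subseteq K$ is immediate.

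The heart of the argument is to show $\log\omega^c\in K$. Writing $c=\sum_y c_y\omega^y$ with $y$ ranging over $E(c)\subseteq I$, additivity of the omega map yields $\omega^c=\prod_y \omega^{c_y\omega^y}$; combined with $\omega^{\omega^y}=\exp(\omega^{h(y)})$ raised to the real power $c_y$ and the lemma of Section~\ref{sec:infprod} identifying $\prod_i\exp(a_i)$ with $\exp(\sum_i a_i)$, this gives
\[ \log\omega^c\ =\ \sum_y c_y\,\omega^{h(y)}. \]
The support $\{h(y):y\in E(c)\}$ is reverse well-ordered because $h$ is order-preserving and injective, and it is contained in $\Gamma$ by the hypothesis $h(I)\subseteq\Gamma$ together with $E(c)\subseteq I$; hence $\log\omega^c\in K$. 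For $\log(1+\delta)$, the family $\bigl((-1)^{n+1}\delta^n/n\bigr)_{n\ge 1}$ is summable in $K$ (a standard consequence of $\delta\prec 1$ and the closure of $K$ under infinite sums noted earlier) with sum $\log(1+\delta)$, so all three pieces lie in $K$ and $\log a\in K$.

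For the second clause, a term $t=a_x\omega^x$ of $a\in K$ has $a_x\in\R^{\times}$ and $x\in E(a)\subseteq\Gamma$, so $t\in K$ since $\omega^x\in\fM\cap K$. The leading monomial of $t$ is $\omega^x$ itself, whence $\ell(t)=\log\omega^x$, and applying the formula above with $x$ in place of $c$ gives $\ell(t)=\sum_y x_y\omega^{h(y)}$ with $y$ ranging over $E(x)\subseteq I$. Every term $x_y\omega^{h(y)}$ of $\ell(t)$ therefore lies in $K$ precisely because $h(y)\in\Gamma$.

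The main obstacle is establishing the central identity $\log\omega^c=\sum_y c_y\omega^{h(y)}$, which demands a careful interplay between additivity of the omega map, the relation $\omega^{\omega^y}=\exp(\omega^{h(y)})$, and the summability lemmas of Section~\ref{sec:infprod}. Once this identity is in place, closure of $K$ under infinite sums and infinite products of monomials (already noted in this section) dispatches the remainder, and the second clause is a direct specialization to a single Conway exponent $x$.
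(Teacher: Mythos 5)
Your proposal is correct and follows essentially the same route as the paper: both arguments reduce everything to the identity $\log\omega^b=\sum_y b_y\omega^{h(y)}$ obtained from $\omega^{\omega^y}=\exp(\omega^{h(y)})$ and then invoke the hypothesis $h(I)\subseteq\Gamma$ together with closure of $K$ under infinite sums. The only differences are cosmetic: you spell out the $\log r$ and $\log(1+\delta)$ pieces that the paper leaves implicit, and you get $t\in K$ directly from $E(a)\subseteq\Gamma$ where the paper appeals to initiality of $K$.
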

\begin{proof} Let $a\in K^{>}$ have leading monomial
$\frak{m}=\omega^b$ with $b=\sum_{y\in I}b_y\omega^y$; to get
$\log a\in K$, it is enough that $\log \frak m\in K$; the latter holds because $\log \frak{m}=\sum_y b_y\omega^{h(y)}$. 
This proves $\log K^{>}\subseteq K$. 

Next, let $a\in K$ and let $t$ be a term of $a$; we have to show  that $t$ and all terms of $\ell(t)$ lie in $K$.
As $K\supseteq \R$ is initial, it does contain the term $t$ of its element $a$. We have $t=r\omega^b$ with $r\in \R^\times$
and $b\in \Gamma$, so $b=\sum_{y\in I} b_y\omega^y$, and
thus $\omega^b=\exp\big(\sum_{y\in I}b_y\omega^{h(y)}\big)$.
Hence 
$\ell(t)=\ell(r\omega^b)=\sum_{y\in I} b_y \omega^{h(y)}$
and each of its terms $b_y\omega^{h(y)}$ lies obviously in $K$.
\end{proof}

\begin{cor}\label{cor:2} If $h(I)\subseteq \Gamma$ and $D$ is a pre-derivation with $D(K\cap \fA)\subseteq K$, then $\der_D(K)\subseteq K$.
\end{cor}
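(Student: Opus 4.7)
The plan is to use the path-derivative formula $\der_D(a)=\sum_{P\in \mathcal{P}(a)}\der_D(P)$ together with the closure properties of $K$ established in the preceding lemmas. Concretely, since $K=\R[[\omega^\Gamma]]$ is spherically complete and, as noted just before the statement of Lemma~\ref{3a}, closed under summable sums, it suffices to show that $\der_D(P)\in K$ for every path $P\in \mathcal{P}(a)$ with $a\in K$; summability of the family $\bigl(\der_D(P)\bigr)_{P\in \mathcal{P}(a)}$ is guaranteed by (BM3), and the resulting sum then lies in $K$ automatically.

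The first step is to show by induction on $n$ that $P(n)\in K$ for every $P\in \mathcal{P}(a)$. For $n=0$, $P(0)$ is a term of $a\in K$, and Lemma~\ref{2} (which uses the hypothesis $h(I)\subseteq \Gamma$) gives $P(0)\in K$ together with the fact that every term of $\ell(P(0))$ also lies in $K$. For the inductive step, $P(n+1)$ is a term of $\ell(P(n))$; applying the ``every term of $\ell(t)$ lies in $K$'' half of Lemma~\ref{2} to the element $P(n)\in K$ shows that $P(n+1)\in K$ and in turn every term of $\ell(P(n+1))$ is in $K$, which sets up the next step of the induction.

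The second step is to evaluate $\der_D(P)$ in each of the two cases of its definition. If $P(n)\notin \fA$ for all $n$, then $\der_D(P)=0\in K$. Otherwise pick $n$ with $P(n)\in \fA$. Then $P(n)\in K\cap \fA$, so by hypothesis $D(P(n))\in K$, while $\prod_{m<n}P(m)$ is a finite product of elements of $K$, hence in $K$. Thus
\[
\der_D(P)\ =\ \Bigl(\prod_{m<n}P(m)\Bigr)\cdot D(P(n))\ \in\ K.
\]
Combining this with the summability observation from the first paragraph gives $\der_D(a)\in K$, as required.

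There is no real obstacle here beyond making sure Lemma~\ref{2} is applied correctly to propagate membership in $K$ along the entire path; once that is in hand, the corollary is an immediate consequence of the structural properties of $K$ and the definition of $\der_D$ via path derivatives.
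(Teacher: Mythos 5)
Your proof is correct and is essentially the paper's own argument (the paper's proof is a one-line instruction to combine the path-derivative definition of $\der_D$, closure of $K$ under infinite sums, and Lemma~\ref{2}); you have simply filled in the details, in particular the induction along paths using the ``all terms of $\ell(t)$ lie in $K$'' part of Lemma~\ref{2}.
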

\begin{proof} Use the definition of $\der_D$ from Section~\ref{sec:pre}, the fact that $K$ is closed under infinite sums, and
Lemma~\ref{2}.
\end{proof}

\begin{cor}\label{cor:3} Suppose $h(I)\subseteq \Gamma$. Then $\der_{\BM}(K)\subseteq K$.
\end{cor}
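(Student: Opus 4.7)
The plan is to invoke Corollary \ref{cor:2} with $D := \der_\fA$, the pre-derivation from Section \ref{sec:infprod} for which $\der_D = \der_{\BM}$: it then suffices to show $\der_\fA(\lambda) \in K$ for every $\lambda \in K \cap \fA$. I will use the defining formula
\[
\der_\fA(\lambda)\ =\ \prod_n \log_n\lambda \bigg/ \prod_\alpha \log_\alpha\omega,
\]
with $\alpha$ ranging over the ordinals satisfying $\log_\alpha\omega \ge \log_n\lambda$ for some $n$, and show separately that each of the two infinite products lies in $K$.

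For the numerator, iterating Lemma \ref{2} --- whose hypothesis $h(I) \subseteq \Gamma$ is precisely our assumption --- yields $\log_n\lambda \in K$ for every $n$. Each $\log_n\lambda \in \fA \subseteq \fM$ is then a monomial in $K$, the sequence is strictly $\preceq$-decreasing so $\prod_n\log_n\lambda$ exists, and closure of $K$ under infinite sums, hence under infinite products of monomials lying in $K$ (as in the subsection on infinite products above), gives $\prod_n\log_n\lambda \in K$.

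The substance of the proof is the denominator. Since $\lambda \in \fM$ and $\lambda \in K = \R[[\omega^\Gamma]]$, one writes $\lambda = \omega^c$ with $c \in \Gamma$; as $\log\lambda \in \fA$ is again a monomial, $c$ must itself be a monomial $\omega^y$, and $\omega^y \in \Gamma$ forces $y \in I$. Set $\gamma := \sup\{\beta : -\beta \le_s y\}$, the length of the initial all-minus run in the sign sequence of $y$, so that $-\gamma \le_s y$ and an inspection of sign sequences shows $y \ge -\gamma$ in the field order (either $y = -\gamma$, or $y$ extends $-\gamma$ by a $+$ at position $\gamma$, by maximality of $\gamma$). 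Writing $\lambda = \lambda_x$, the identification $\lambda_x = \omega^{\omega^y}$ together with Lemma \ref{lambda2} yields $x \ge -\gamma$; hence every $\alpha$ in the denominator's index set satisfies $-\alpha \ge x - n \ge -\gamma - n$ for some $n$, i.e.\ $\alpha \le \gamma + n$. If $\alpha \le \gamma$ then $-\alpha \le_s -\gamma \le_s y$, and initiality of $I$ gives $-\alpha \in I$; if $\alpha = \gamma + k$ with $k \ge 1$, iterated $h$-closure applied to $-\gamma \in I$ --- via $h(-(\gamma + j)) = \omega^{-(\gamma + j + 1)} \in \Gamma$ for $j = 0, \dots, k-1$ --- yields $-\alpha \in I$. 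In either case $\log_\alpha\omega = \omega^{\omega^{-\alpha}}$ lies in $K$, so $\prod_\alpha\log_\alpha\omega \in K$ by the same monomial-product closure used for the numerator; closure of $K$ under field operations then gives $\der_\fA(\lambda) \in K$.

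The main obstacle is isolating the ordinal $\gamma$ bounding the denominator's index set and verifying $x \ge -\gamma$ through the sign sequence of $y$ under the order-preserving correspondence $\lambda_x = \omega^{\omega^y}$; once that is in hand, the combination of initiality of $I$ and the $h$-closure hypothesis finishes the argument cleanly.
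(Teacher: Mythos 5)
Your proof is correct, and its first two steps --- the reduction via Corollary~\ref{cor:2} and the treatment of the numerator $\prod_n\log_n\lambda$ via Lemma~\ref{2} and closure of $K$ under infinite products of monomials --- coincide with the paper's. Where you genuinely diverge is the denominator. The paper dispatches it in two lines: given $\alpha$ in the index set, pick $n$ with $\log_n\lambda<\log_\alpha\omega$; Lemma~\ref{lambda2a} then gives $\log_{\alpha}\omega\le_s\log_{\alpha+1}\omega\le_s\log_n\lambda\in K$, and initiality of $K$ finishes. You instead compute the index set explicitly: writing $\lambda=\omega^{\omega^y}$ with $y\in I$ and letting $\gamma$ be the length of the initial minus-run of $y$, you bound the index set by the ordinals $\le\gamma+n$ and show $-\alpha\in I$ for each such $\alpha$, using initiality of $I$ up to $\gamma$ and iterating the hypothesis $h(I)\subseteq\Gamma$ beyond it. Both arguments are sound (your translation of the index condition into $-\alpha\ge x-n$ via $\log_n\lambda_x=\lambda_{x-n}$ and $\log_\alpha\omega=\lambda_{-\alpha}$ is correct, as is the chain $-\gamma\in I\Rightarrow h(-\gamma)=\omega^{-(\gamma+1)}\in\Gamma\Rightarrow-(\gamma+1)\in I\Rightarrow\cdots$). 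The paper's route is shorter and localizes the use of $h(I)\subseteq\Gamma$ entirely inside Lemma~\ref{2}, letting initiality of $K$ do the rest; yours buys the sharper conclusion that every exponent $-\alpha$ occurring in the denominator already lies in $I$, at the cost of the sign-sequence bookkeeping around $\gamma$. One stylistic nit: the claim that $c$ is a monomial $\omega^y$ is more cleanly justified by the paper's observation $\fA\subseteq\omega^{\fM}$ than by the remark that $\log\lambda$ is a monomial.
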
 
\begin{proof} Let $\lambda\in K\cap \fA$;  by Corollary~\ref{cor:2} we just need to get $\der_{\fA}(\lambda)\in K$. Since $K$ is closed under infinite products, it is enough for this to get
$\log_n \lambda\in K$ for all $n$ (which is the case by Lemma~\ref{2}), and $\lambda_{-\alpha}\in K$ for all $\alpha$ such that $\lambda\preceq^K \lambda_{-\alpha}$.  Given such $\alpha$,
take $n$ with $\log_n\lambda < \lambda_{-\alpha}$. Then
$\lambda_{-\alpha}\le_s\lambda_{-(\alpha +1)}\le_s \log_n\lambda\in K$ by Lemma~\ref{lambda2a},
  and so $\lambda_{-\alpha}\in K$ because
 $K$ is initial. 
\end{proof}

\noindent
It can happen that $h(I)\not\subseteq \Gamma$ and that $K$ is nevertheless closed under $\der_{\BM}$. 
The next lemma gives a useful criterion for that. To
see why that lemma holds, consider a surreal derivation $\der$,
and note that
from $\omega^{\omega^y}=\exp(\omega^{h(y)})$ we get 
$$\der\big(\omega^{\omega^y}\big)\ =\ \omega^{\omega^y}\cdot\der(\omega^{h(y)}),$$
so for any monomial 
$\frak{m}=\omega^b\in K$ we have $b=\sum_{y\in I}b_y\omega^y$, and thus $$\frak{m}\ =\ \exp\left(\sum_{y\in I}b_y\omega^{h(y)}\right),\qquad
\der(\frak{m})\ =\ \frak{m}\cdot
\sum_{y\in I}b_y\der\big(\omega^{h(y)}\big).$$
This leads to:

\begin{lemma}\label{7} Given a surreal derivation $\der$,
the following are equivalent: 
\begin{enumerate}
\item $K$ is closed under $\der$;
\item $\der(\omega^{\omega^y})\in K$ for all $y\in I$;
\item $\der(\omega^{h(y)})\in K$ for all $y\in I$.
\end{enumerate}
\end{lemma}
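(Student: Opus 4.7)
The plan is to establish the cycle $(1)\Rightarrow(2)\Leftrightarrow(3)\Rightarrow(1)$, leaning on the identity $\omega^{\omega^y} = \exp(\omega^{h(y)})$ together with the axioms (SD3) and (SD4) of a surreal derivation, plus the fact (noted above) that $K=\R[[\omega^\Gamma]]$ is closed under summable infinite sums. Throughout, for $y \in I$ we have $\omega^y \in \Gamma$, hence $\omega^{\omega^y} \in \fM \cap K$; this is the basic fact that gets the argument started.

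The implication $(1)\Rightarrow(2)$ is immediate, and $(2)\Leftrightarrow(3)$ follows from (SD3) applied to $\omega^{\omega^y} = \exp(\omega^{h(y)})$, giving
$$\der(\omega^{\omega^y})\ =\ \omega^{\omega^y}\cdot \der(\omega^{h(y)}),$$
so since $\omega^{\omega^y}$ is a nonzero element of $K$, one side lies in $K$ iff the other does.

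For $(3)\Rightarrow(1)$, the strategy is first to handle a single monomial $\fm = \omega^b\in \fM\cap K$, then to combine over all monomials of $a\in K$. Writing $b\in\Gamma$ in Conway normal form as $b=\sum_{y\in I}b_y\omega^y$, we have $\log\fm = \sum_{y\in I}b_y\omega^{h(y)}\in \J$ (this sum exists because $\log\fm$ does), so the family $(b_y\omega^{h(y)})_{y\in I}$ is summable. By (SD3) and (SD4),
$$\der(\fm)\ =\ \fm\cdot\der(\log\fm)\ =\ \fm\cdot\sum_{y\in I}b_y\,\der(\omega^{h(y)}).$$
Each term $\der(\omega^{h(y)})$ lies in $K$ by hypothesis (3), the family $(b_y\der(\omega^{h(y)}))$ is summable by (SD4), and $K$ is closed under summable sums, so $\der(\log\fm)\in K$; multiplying by $\fm\in K$ gives $\der(\fm)\in K$. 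For general $a = \sum_{\fm}a_\fm\fm\in K$ (with $a_\fm\in\R$), (SD4) yields $\der(a) = \sum_\fm a_\fm\der(\fm)$ with summable family, and each summand lies in $K$ by what was just shown, so $\der(a)\in K$.

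The only real subtlety is tracking summability at two different scales: one must first justify that $(b_y\omega^{h(y)})_{y\in I}$ is summable so that (SD4) applies to the logarithmic sum inside a single monomial, and then apply (SD4) again at the outer level over all monomials of $a$. Both uses are legitimate once one notices that the inner summability comes for free from $\log\fm\in\No$ and the outer from $a\in\No$, so this is more a matter of careful bookkeeping than of genuine difficulty.
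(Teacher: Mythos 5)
Your proof is correct and follows essentially the same route as the paper, which justifies the lemma in the paragraph immediately preceding it via the identity $\der(\omega^{\omega^y})=\omega^{\omega^y}\cdot\der(\omega^{h(y)})$ and the expansion $\der(\fm)=\fm\cdot\sum_{y\in I}b_y\der(\omega^{h(y)})$ for a monomial $\fm=\omega^b\in K$, then sums over the monomials of a general element using (SD4). Your added bookkeeping about the two levels of summability and the reduction from general $a\in K$ to monomials is exactly the intended filling-in of the paper's sketch.
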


\subsection*{The surreal fields $K_{\epsilon}$} Given the 
$\epsilon$-number $\epsilon$, we have the initial set
$I:=\No(\epsilon)$, with the corresponding $\Gamma:=\R[[\omega^I]]$ and $K:= \R[[\omega^{\Gamma}]]$. 
In view of Lemmas~\ref{3} and~\ref{3a} we have $h(I)\subseteq I\subseteq \Gamma$, so $\der_{\BM}(K)\subseteq K$ by Corollary~\ref{cor:3}. Thus $K$ is a spherically complete initial $H$-subfield of {\bf No}. However, $I$ has no least element, so~$K$ is not grounded. We repair this by just augmenting $I$ by $-\epsilon$: set
$I_{\epsilon}:= I \cup \{-\epsilon\}$. Then $I_{\epsilon}$ is still
initial, with least element $-\epsilon$, and so we have the
corresponding $\Gamma_{\epsilon}:=\R[[\omega^{I_{\epsilon}}]]$ and 
$K_{\epsilon}:=\R[[\omega^{\Gamma_{\epsilon}}]]$. To get 
$\der_{\BM}(K_{\epsilon}) \subseteq K_{\epsilon}$ we note that $K\subseteq K_{\epsilon}$, and so it suffices by Lemma~\ref{7} that $\der_{\fA}(\omega^{\omega^{-\epsilon}})\in K_{\epsilon}$.
But $\omega^{\omega^{-\epsilon}}=\log_\epsilon \omega$, and 
$$\der_{\fA}(\log_{\epsilon}\omega)\ =\ 1\bigg/\prod_{\alpha< \epsilon} \log_{\alpha} \omega,$$
which lies in $K$, and hence in $K_{\epsilon}$.
Thus $K_{\epsilon}$ is a grounded $H$-subfield of {\bf No}, and
$$ \No\ =\ \bigcup_{\epsilon}K_{\epsilon}.$$ 
Note that Corollary~\ref{cor:3} does not apply to $I_{\epsilon}$, since $h(-\epsilon)=\omega^{-(\epsilon + 1)}\notin \Gamma$; this is why we did the less direct construction via
$I=\No(\epsilon)$.

\medskip\noindent
Since $\omega^{-\epsilon}$ represents the smallest archimedean class of $\Gamma_{\epsilon}$, we have 
$$\max \Psi_{K_{\epsilon}}\ =\ v\big((\omega^{\omega^{-\epsilon}})^\dagger\big)\ =\ v\big((\log_{\epsilon}\omega)^\dagger\big)$$ 
by the proof of Lemma~\ref{grarch}. In view of $(\log_{\epsilon}\omega)^\dagger=
(\log_{\epsilon + 1}\omega)'$ and the remarks at the end of 
Section~\ref{sec:pre}, the representation of $\No$ as an 
increasing union 
$\bigcup_{\epsilon} K_{\epsilon}$ of spherically complete grounded $H$-subfields now
gives $\der_{\BM}(\No)=\No$.
(The proof of $\der_{\BM}(\No)=\No$ in \cite[Section 7]{BM} is different.) 
Thus by the results stated at the end of Section~\ref{sec:pre} we conclude that $\No \equiv \T$, as differential fields.

\section{The Case of Restricted Length} \label{sec:countable}

\noindent
A set $S\subseteq \No$ is said to be of {\em countable type}\/ if
$l(a)$ is countable for all $a\in S$, and all well-ordered subsets
of $S$ as well as all reverse well-ordered subsets of $S$ are countable. (Note that $l(a)$ is countable for every
$a\in \No(\omega_1)$, but that $\No(\omega_1)$ is not of countable type, since it has the set of countable ordinals as an uncountable well-ordered subset.)

\begin{prop}\label{c1} Suppose the subset $S$ of $\No$ is of countable type. Then the additive subgroup $\R[[\omega^{S}]]$ of $\No$ is also of countable type.
\end{prop}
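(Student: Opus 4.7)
Write $H := \R[[\omega^S]]$. The strategy is to verify the three clauses defining ``countable type'' for $H$: countability of the length of each element, and countability of every well-ordered and every reverse well-ordered subset. Since $a \mapsto -a$ is an order-reversing automorphism of $H$, it suffices to rule out uncountable strict chains of either kind together.

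\textbf{Countable length.} For $a \in H$, the support $E(a) \subseteq S$ is reverse well-ordered in $S$, hence countable (by countable type of $S$), with $l(x) < \omega_1$ for each $x \in E(a)$. Combining the length bounds for $\omega^x$ and Hahn sums from \cite{DE} with the fact that a countable natural sum of countable ordinals is countable yields $l(a) < \omega_1$.

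\textbf{Transfinite peeling of leading terms.} Suppose for contradiction that $(a_\xi)_{\xi < \omega_1}$ is a strictly monotone $\omega_1$-sequence in $H$; after shifting by a common summand and passing to a tail, we may assume $a_\xi > 0$ for all $\xi$. I plan to construct by transfinite recursion on $\alpha < \omega_1$ a strictly decreasing $\omega_1$-sequence $(M^{(\alpha)})_{\alpha < \omega_1}$ in $S$, contradicting the countable type of $S$. At each stage $\alpha$, maintain a cofinal tail $T_\alpha = [\xi^*_\alpha, \omega_1)$ of $\omega_1$ and a strictly monotone family $(a_\xi^{(\alpha)})_{\xi \in T_\alpha}$ of positive elements of $\R[[\omega^{S^{(\alpha)}}]]$, where $S^{(\alpha)} := \{s \in S : s < M^{(\beta)}\text{ for all }\beta < \alpha\}$. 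At a successor $\alpha+1$: the leading monomial $M_\xi := \max \supp (a_\xi^{(\alpha)}) \in S^{(\alpha)}$ and leading coefficient $c_\xi > 0$ are weakly monotone in $\xi$. The image of any weakly monotone $\omega_1$-sequence into a linearly ordered set $L$ is well-ordered (or reverse well-ordered) in $L$ via first-occurrence times; if uncountable it would give an uncountable well- or reverse well-ordered subset of $L$. Since both $S^{(\alpha)}$ (initial in $S$) and $\R$ are of countable type, both $(M_\xi)$ and $(c_\xi)$ have countable images. By regularity of $\omega_1$, each is then constant on a cofinal tail (the fibers are intervals partitioning $\omega_1$, so some fiber is necessarily unbounded). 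Intersecting these tails produces a cofinal tail on which $(M_\xi, c_\xi) = (M^{(\alpha)}, c^{(\alpha)})$; set $a_\xi^{(\alpha+1)} := a_\xi^{(\alpha)} - c^{(\alpha)}\omega^{M^{(\alpha)}}$, whose support lies strictly below $M^{(\alpha)}$, so $a_\xi^{(\alpha+1)} \in \R[[\omega^{S^{(\alpha+1)}}]]$. Truncate the tail so all $a_\xi^{(\alpha+1)}$ are nonzero, and flip sign if needed to restore positivity (either the sequence is eventually positive on some tail, or it is eventually negative on some tail, in which case we negate it). At a limit $\lambda$: set $\xi^*_\lambda := \sup_{\alpha < \lambda} \xi^*_\alpha < \omega_1$ (by regularity of $\omega_1$), and define $a_\xi^{(\lambda)}$ as $a_\xi^{(0)}$ minus the summable series $\sum_{\alpha < \lambda} \pm c^{(\alpha)} \omega^{M^{(\alpha)}}$ (valid since $(M^{(\alpha)})_{\alpha < \lambda}$ is reverse well-ordered), with signs tracking the sign flips; truncate and flip once more to get $T_\lambda$.

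\textbf{Main obstacle.} The delicate step is the limit of the transfinite recursion: a naive construction extracting an uncountable fiber at each successor would require countable intersections of uncountable subsets of $\omega_1$ to remain uncountable, which fails in general (e.g.\ via Solovay's partition of $\omega_1$ into disjoint stationary sets). The device that makes the construction work is to pass to \emph{cofinal tails} of $\omega_1$ throughout: by regularity of $\omega_1$, a weakly monotone $\omega_1$-sequence with countable image is constant on a cofinal tail, and countable intersections of cofinal tails of $\omega_1$ remain cofinal tails. This lets the recursion run for all $\omega_1$ stages, producing the required strictly decreasing $\omega_1$-sequence $(M^{(\alpha)})_{\alpha < \omega_1}$ in $S$ and hence the desired contradiction.
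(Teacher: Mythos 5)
Your proposal is correct in substance but takes a genuinely different route on the main clause. The paper disposes of the well-ordered and reverse well-ordered conditions in one line by quoting the case $\alpha=1$ of Esterle's Lemme~2.2 \cite{E}, and spends its effort on the length clause, where regularity of $\omega_1$ first yields a single countable $\nu$ with $l(s)\le\nu$ for all $s\in E(a)$, and then the estimates $l(\omega^s)\le\omega^{\nu}$, $l(a_s\omega^s)\le\omega^{\nu+1}$, $l(a)\le\mu\cdot\omega^{\nu+1}$ from \cite{DE} and \cite{G} finish the job. You do the opposite: your length argument is a compressed version of the paper's (be sure to insert the uniform bound $\nu$ via regularity of $\omega_1$ before invoking the cited estimates, since the bound on $l(a)$ one actually gets from \cite{DE,G} is the product $\mu\cdot\omega^{\nu+1}$ rather than a literal ``countable natural sum''), while for the chain condition you give a self-contained transfinite leading-term-stripping proof of exactly the statement the paper imports from Esterle. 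That argument is sound: for a positive strictly increasing family the pair $(M_\xi,c_\xi)$ is weakly increasing \emph{lexicographically}, its fibers are convex, its image is countable (countably many values of $M_\xi$ since $S^{(\alpha)}$ is of countable type, and on each fiber of $M_\xi$ the coefficients form a well-ordered, hence countable, subset of $\R$), so by regularity some fiber is a cofinal tail; the limit stages work because the stripped-off terms have strictly decreasing exponents and therefore form a summable family, and because a countable intersection of tails of $\omega_1$ is a tail --- you correctly identify this as the crux. The one local slip is the assertion that $c_\xi$ itself is weakly monotone in $\xi$: it is not (e.g.\ $5<\omega$ with leading coefficients $5>1$), so the two constancy extractions must be performed in order, or the pair read lexicographically as above; this is cosmetic. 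What your route buys is independence from \cite{E}; what it costs is that you reprove, at some length, a classical fact the paper is content to cite.
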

\begin{proof} The case $\alpha=1$ of Esterle~\cite[Lemme~2.2]{E} and the remarks following it yield that
every well-ordered subset of $\R[[\omega^S]]$ is countable. Hence every reverse well-ordered subset of $\R[[\omega^S]]$ is countable as well.  Let
$a\in \R[[\omega^S]]$. Then $a=\sum_{s\in E(a)} a_{s}\omega^s$. Now
$E(a)\subseteq S$ is countable, so the well-ordered set $-E(a)$
has order type
$\mu< \omega_1$. Since $\omega_1$ is regular, we have a
countable ordinal
$\nu$ such that $l(s)\le \nu$ for all $s\in E(a)$.
Then $l(\omega^s)\le \omega^\nu$ for all $s\in E(a)$ by \cite[Lemma~4.1]{DE}, hence $l(a_s\omega^s)\le \omega^{\nu+1}$ for all $s\in E(a)$ by \cite[Proposition~3.6]{DE}. Thus 
$$l(a)\ \le\ \mu\cdot \omega^{\nu +1}\ <\ \omega_1,$$
by \cite[Theorem~5.12]{G}, or \cite[Lemma~4.2,~(3)]{DE}. 
\end{proof}

\noindent
As an example, consider $S:=\No(\omega)$, the set of of dyadic numbers. Then $S$ is of countable type, and so
$\R[[\omega^S]]$ is of countable type. Nevertheless,
$l\big(\R[[\omega^S]]\big)$ is cofinal in $\omega_1$: given any countable ordinal $\mu$, take an order reversing injective map
$\alpha\mapsto s_{\alpha}\colon \mu \to S$; then
$a:=\sum_{\alpha} \omega^{s_{\alpha}}\in \R[[\omega^S]]$ has $l(a)\ge \mu$, by \cite[p.~63]{G}.

\medskip\noindent
Let $\kappa$ be any infinite cardinal. Esterle~\cite[Lemme~2.2]{E} actually tells us for any set~$S\subseteq \No$:
if all well-ordered subsets and all reverse well-ordered
subsets of~$S$ have size $\le \kappa$, then this remains true for the set $\R[[\omega^S]]\subseteq \No$. The next cardinal~$\kappa^{+}$ is regular, so the arguments in the proof of
Proposition~\ref{c1} go through to give the following, where we call $S\subseteq \No$ {\em of type $\kappa$} if $l(a)\le \kappa$ for all $a\in S$ and all well-ordered subsets of $S$ and all reverse well-ordered subsets of $S$ have size  $\le \kappa$. 

\begin{cor}\label{corc1} If $S\subseteq \No$ is of type $\kappa$, then so is $\R[[\omega^{S}]]$.
\end{cor}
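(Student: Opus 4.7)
The plan is to adapt the proof of Proposition~\ref{c1} verbatim, replacing the parameter $\aleph_0$ by the cardinal $\kappa$ and the regular cardinal $\omega_1$ by the successor cardinal $\kappa^+$, which is automatically regular. The key feature of the Proposition~\ref{c1} argument was the use of regularity of $\omega_1$ to bound the lengths of the exponents appearing in the Conway normal form by a single countable ordinal; this step goes through for any regular cardinal.

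First, I would invoke the form of Esterle's lemma recalled just before the corollary statement: if every well-ordered and every reverse well-ordered subset of $S$ has cardinality $\le\kappa$, then the same holds for $\R[[\omega^S]]$. This handles two of the three clauses in the definition of ``type~$\kappa$''.

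For the length clause, fix $a\in\R[[\omega^S]]$ and write $a=\sum_{s\in E(a)}a_s\omega^s$ in Conway normal form. Then $E(a)\subseteq S$ is reverse well-ordered of cardinality $\le\kappa$, so $-E(a)$ has some order type $\mu<\kappa^+$. Since $l(s)\le\kappa<\kappa^+$ for every $s\in E(a)$ and $\kappa^+$ is regular, there exists a single $\nu<\kappa^+$ with $l(s)\le\nu$ for all $s\in E(a)$. Applying \cite[Lemma~4.1]{DE} gives $l(\omega^s)\le\omega^\nu$, then \cite[Proposition~3.6]{DE} gives $l(a_s\omega^s)\le\omega^{\nu+1}$, and finally \cite[Theorem~5.12]{G} (or \cite[Lemma~4.2~(3)]{DE}) yields
$$l(a)\ \le\ \mu\cdot\omega^{\nu+1}\ <\ \kappa^+,$$
the last inequality because $\kappa^+$ is closed under ordinal multiplication of ordinals below it. Since the ordinals below $\kappa^+$ are exactly those of cardinality $\le\kappa$, this is the required length bound on $a$ (matching the conclusion ``countable length'' in Proposition~\ref{c1} when $\kappa=\aleph_0$).

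No genuine obstacle arises: the only ingredient not already present in the proof of Proposition~\ref{c1} is regularity of $\kappa^+$, which is automatic for a successor cardinal and is precisely what allows us to pass from the family of bounds $l(s)\le\kappa$ to a single uniform bound $\nu<\kappa^+$. The rest is a direct transcription of the ordinal-arithmetic estimates from the original argument.
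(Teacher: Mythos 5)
Your proposal is correct and is essentially identical to the paper's own argument, which consists precisely of the two remarks preceding the corollary: invoke Esterle's lemma in its general form for the well-ordered and reverse well-ordered clauses, and rerun the length estimate from Proposition~\ref{c1} with the regular cardinal $\kappa^{+}$ in place of $\omega_1$. The only (harmless) difference is that you spell out the ordinal arithmetic that the paper leaves implicit in the phrase ``the arguments go through.''
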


\noindent
Next we show that for countable $\mu$ the set $\No(\mu)$ is of countable type. Every element of $\No(\mu)$ has clearly countable length, for countable $\mu$,
and $\No(\mu)$ is closed under $x\mapsto -x$, so the assertion above reduces to:

\begin{lemma}\label{c2} Suppose the ordinal $\mu$ is countable. Then every well-ordered subset of $\No(\mu)$ is countable.
\end{lemma}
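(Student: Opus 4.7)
The plan is transfinite induction on the countable ordinal $\mu$, with inductive hypothesis that every well-ordered subset of $\No(\mu')$ is countable for all countable $\mu' < \mu$. The base case $\mu=0$ is vacuous. For $\mu$ a countable limit ordinal I fix a strictly increasing cofinal sequence $(\mu_n)_{n\in\N}$ in $\mu$; every $w$ in a well-ordered $W\subseteq \No(\mu)$ has $l(w)<\mu_n$ for some $n$, so $W=\bigcup_n(W\cap \No(\mu_n))$ is a countable union of sets countable by the inductive hypothesis.

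For the successor case $\mu=\nu+1$ I decompose $\No(\nu+1)=\No(\nu)\cup L_\nu$ with $L_\nu:=\{a:l(a)=\nu\}$. Given well-ordered $W\subseteq \No(\nu+1)$, the set $W\cap \No(\nu)$ is countable by induction, so it suffices to show $W_1:=W\cap L_\nu$ is countable. The crux is the observation that \emph{between any two elements of $L_\nu$ there lies an element of $\No(\nu)$}: if $a<b$ are in $L_\nu$, viewed as functions $\nu\to\{{-},{+}\}$ they first disagree at some $\alpha<\nu$, necessarily with $a(\alpha)={-}$ and $b(\alpha)={+}$; then $c:=a\restrict\alpha$ has $l(c)=\alpha<\nu$, so $c\in \No(\nu)$, and the convention $c(\alpha)=0$ together with the lexicographic order yields $a<c<b$.

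To exploit this, enumerate $W_1=\{w_\xi:\xi<\tau\}$ in increasing order by its order type $\tau$, and for each successor ordinal $\eta+1<\tau$ pick $c_\eta\in \No(\nu)$ strictly between $w_\eta$ and $w_{\eta+1}$. For $\eta<\eta'$ with both $\eta+1,\eta'+1<\tau$, one has $c_\eta<w_{\eta+1}\le w_{\eta'}<c_{\eta'}$, so $\{c_\eta\}$ is a strictly increasing (hence well-ordered) subset of $\No(\nu)$, countable by the inductive hypothesis. Since for infinite $\tau$ the set of successor ordinals below $\tau$ has cardinality $|\tau|=|W_1|$, we conclude $|W_1|\le \aleph_0$ (the finite case being trivial). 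The main technical point is the between-observation, which is essentially a separability statement for $L_\nu$ via $\No(\nu)$; once established, the induction proceeds without appeal to any machinery beyond the inductive hypothesis itself.
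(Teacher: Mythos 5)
Your proof is correct, and it takes a genuinely different route from the paper's. The paper transports the problem into a Hahn group: it defines an order-preserving injection $a\mapsto\sum_{\alpha<\mu}\widehat a(\alpha)\omega^{-\alpha}$ of $\No(\mu)$ into $\R[[\omega^{S}]]$ for a set $S$ of countable type, and then invokes Proposition~\ref{c1}, which in turn rests on Esterle's theorem \cite{E} that well-ordered subsets of such Hahn groups are countable. Your argument is instead a self-contained transfinite induction on $\mu$: the limit step is a countable union, and the successor step rests on the observation that two distinct surreals $a<b$ of length exactly $\nu$ are separated by their longest common initial segment $c$, which lies in $\No(\nu)$ and satisfies $a<c<b$; the separators attached to consecutive pairs of a well-ordered set of length-$\nu$ elements then form a strictly increasing, hence well-ordered and (by the inductive hypothesis) countable, subset of $\No(\nu)$ of the same cardinality. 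All steps check out, including the cardinality count of successor ordinals below the order type. What is notable is that your successor-step separation serves as a local substitute for separability, which the paper explicitly warns fails globally: the lemma preceding the paper's proof of Lemma~\ref{c2} shows that $\No(\mu)$ has no countable dense subset for countable limit $\mu>\omega$, so the naive density argument that works for $\R$ is unavailable, and your argument threads that needle. The trade-off is that the paper's route is essentially free in context, since Proposition~\ref{c1} (and hence Esterle's result) is needed elsewhere in Section~4 anyway and yields the stronger ``countable type'' conclusion for $\R[[\omega^{S}]]$ in one stroke, whereas your route makes Lemma~\ref{c2} elementary and independent of \cite{E}, and it adapts directly to give Corollary~\ref{corc2} as well.
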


\noindent
This may remind the reader of the well-known property of the ordered set $\R$ that every well-ordered subset of $\R$ is countable. Here is a quick proof using that $\R$ has a countable dense subset $\Q$: given any embedding $\alpha\mapsto r_\alpha$ of an infinite
cardinal $\kappa$ into $\R$, pick for each $\alpha< \kappa$ a rational $q_{\alpha}$ such that $r_{\alpha} < q_{\alpha} < r_{\alpha+1}$; it follows that $\kappa=\aleph_0$. However,
such a countable density argument cannot be used for ordered sets 
$\No(\mu)$ when $\mu$ is a countable limit ordinal $>\omega$:

\begin{lemma} Let $\mu$ be an infinite limit ordinal.
Then the ordered set $\No(\mu)$ is dense without endpoints. If $\mu > \omega$, then there exists a collection of $2^{\aleph_0}$ pairwise disjoint open intervals in $\No(\mu)$, which has therefore no countable dense subset.
\end{lemma}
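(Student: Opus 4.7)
The plan is to treat the two claims separately. Density and the absence of endpoints follow from elementary sign-sequence manipulations; the uncountable family of pairwise disjoint intervals arises by indexing over the $2^{\aleph_0}$ many $\{-,+\}$-valued functions on $\omega$. Throughout I identify a surreal with its sign sequence, and I write $a\frown s$ for the sequence obtained by appending the sign $s$ to $a$.

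For the absence of endpoints: since $\mu$ is a limit ordinal, for any $a\in \No(\mu)$ both $a\frown\{-\}$ and $a\frown\{+\}$ have length $l(a)+1<\mu$ and bracket $a$ in the lexicographic order. For density: given $a<b$ in $\No(\mu)$, a short case analysis on the first position $\nu$ at which the sign sequences of $a$ and $b$ disagree (taking account of whether both are still defined at $\nu$) produces a surreal $c$ with $a<c<b$ and $l(c)\le \max(l(a),l(b))+2$; since $\mu$ is a limit, this length is $<\mu$, so $c\in \No(\mu)$. Concretely, if $a$ and $b$ both have a sign at $\nu$, then the common initial segment of length $\nu$ already lies strictly between them; if instead one is a proper initial segment of the other, then appending a $\{+\}$ or $\{-\}$ followed by at most one additional sign to the shorter one does the job.

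For the uncountable disjoint family: since $\mu>\omega$ and $\mu$ is a limit, in fact $\mu\ge\omega+\omega$, so $\omega+1<\mu$. To each $\sigma\colon\omega\to\{-,+\}$, regarded as a surreal of length $\omega$, associate the open interval
$$ I_\sigma\ :=\ \big(\sigma\frown\{-\},\ \sigma\frown\{+\}\big) $$
of $\No(\mu)$. Its endpoints have length $\omega+1<\mu$, and it contains $\sigma$ itself, so it is a nonempty open interval. For $\sigma\ne\tau$, let $n<\omega$ be the first position at which they differ, and assume without loss of generality $\sigma(n)=-$, $\tau(n)=+$; comparing $\sigma\frown\{+\}$ with $\tau\frown\{-\}$ at position $n$ gives $\sigma\frown\{+\}<\tau\frown\{-\}$, so $I_\sigma$ lies strictly to the left of $I_\tau$ and the intervals are pairwise disjoint. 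Since there are $2^{\aleph_0}$ such $\sigma$ and any dense subset of $\No(\mu)$ must meet each $I_\sigma$, no countable dense subset can exist. The only real point of care is the length bound in the density argument, which is routine given that $\mu$ is a limit; I do not foresee any substantive obstacle.
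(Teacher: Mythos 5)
Your treatment of the endpoints and your family of $2^{\aleph_0}$ pairwise disjoint intervals are both correct; the latter is essentially the paper's construction, which indexes the intervals $({r-},{r+})$ by nondyadic reals $r$ instead of by arbitrary sign sequences $\sigma\colon\omega\to\{-,+\}$, with the same first-disagreement comparison showing disjointness. The first subcase of your density argument — when $a$ and $b$ both carry a sign at the first position $\nu$ of disagreement, take $c=a|_\nu=b|_\nu$, which lies strictly between them — is also correct, and is a perfectly good alternative to the paper's move there.

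The gap is in your second density subcase. When one of $a<b$ is a proper initial segment of the other, appending boundedly many signs to the \emph{shorter} sequence cannot work. Already $a=0$ and $b=1/4=({+},{-},{-})$ defeat your prescription: appending one or two signs to the empty sequence yields only surreals of length $\le 2$, i.e.\ elements of $\{0,\pm 1,\pm 2,\pm\tfrac12\}$, none of which lies in $(0,\tfrac14)$. Worse, with $a=0$ and $b=\omega^{-1}$ (sign sequence ${+}$ followed by $\omega$ minuses, so $b\in\No(\mu)$ for any limit $\mu>\omega$), the interval $(a,b)$ contains no surreal of finite length at all, since the finite-length surreals are exactly the dyadic rationals and $\omega^{-1}$ is below every positive real; so no finite number of appended signs to the shorter sequence can succeed. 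The repair is to append a single sign to the \emph{longer} sequence: if $\alpha$ is the first position of disagreement and $l(b)>\alpha$, then $b(\alpha)={+}$ and $c=b\frown\{-\}$ satisfies $a<c<b$ with $l(c)=l(b)+1<\mu$; symmetrically, if $l(a)>\alpha$ then $a(\alpha)={-}$ and $a<a\frown\{+\}<b$. This is exactly what the paper does in both subcases, and with that one change your proof goes through.
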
 
\begin{proof} The ordinals $\alpha < \mu$ are cofinal
in this ordered set, and there is no largest such $\alpha$.
For $a < b$ in this ordered set, take $\alpha\le l(a), l(b)$
such that $a|_{\alpha}= b|_{\alpha}$ and $a(\alpha) < b(\alpha)$. If $l(b)>\alpha$, then $b(\alpha) = {+}$, so 
$a < {b-} < b$. If $l(a) > \alpha$, then $a(\alpha)={-}$,
so $a < {a+} < b$. Note that 
${b-}, {a+}\in \No(\mu)$, as $\mu$ is a limit ordinal, 

Next, assume $\mu > \omega$. For each nondyadic $r\in \R\subseteq \No$, we have the surreals~${r-}$ and~${r+}$
of length $\omega+1$, and so we obtain the pairwise disjoint
open intervals $({r-}, {r+})$ in $\No(\mu)$.
\end{proof} 

\begin{proof}[Proof of Lemma~\ref{c2}] For $a\in\No(\mu)$ we define 
$\widehat{a}\colon\mu\to\R$ by
$$\widehat{a}(\alpha)\ =\ \begin{cases}
-1 & \text{if $a(\alpha)={-}$,} \\
0 & \text{if $a(\alpha)=0$,} \\
1 & \text{if $a(\alpha)={+}$,} 
\end{cases}$$
For $S=\{\alpha:\ \alpha<\mu\}$ this yields an order-preserving injective map 
$$a\mapsto \sum_{\alpha<\mu}\widehat{a}(\alpha)\omega^{-\alpha}\ \colon\ \No(\mu)\to \R[[\omega^S]].$$
It remains to appeal to Proposition~\ref{c1}.
\end{proof}

\noindent
Essentially the same argument yields the following generalization:

\begin{cor}\label{corc2} If $\kappa$ is an infinite cardinal and
$\mu$ is an ordinal of cardinality $\le \kappa$, then each well-ordered subset of $\No(\mu)$ has cardinality $\le \kappa$.
\end{cor}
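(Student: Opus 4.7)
The plan is to mimic the proof of Lemma~\ref{c2} step by step, replacing $\aleph_0$ with $\kappa$ throughout and using Corollary~\ref{corc1} in place of Proposition~\ref{c1}. First I would set $S:=\{\alpha:\ \alpha<\mu\}$ and verify that this initial segment of $\On$ is of type $\kappa$. Each ordinal $\alpha<\mu$ has length $l(\alpha)=\alpha$, and since $|\alpha|\le|\mu|\le\kappa$, the length bound holds. Both well-ordered and reverse well-ordered subsets of $S$ are subsets of the ordinal $\mu$, hence of cardinality at most $|\mu|\le\kappa$.

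Next I would invoke Corollary~\ref{corc1} to conclude that $\R[[\omega^S]]$ is of type $\kappa$; in particular every well-ordered subset of this additive group has cardinality $\le\kappa$. The remaining ingredient is the map
$$a\mapsto\sum_{\alpha<\mu}\widehat{a}(\alpha)\omega^{-\alpha}\ \colon\ \No(\mu)\to\R[[\omega^S]]$$
from the proof of Lemma~\ref{c2}, defined via the same $\{-1,0,+1\}$-valued encoding $\widehat{a}$ of the sign sequence of $a$. Summability of the displayed family is automatic because its support lies in the reverse well-ordered set $\{\omega^{-\alpha}:\alpha<\mu\}$ and each exponent carries at most one nonzero coefficient. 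The map is order-preserving and injective by the same lexicographic bookkeeping as in the countable case, so any well-ordered subset of $\No(\mu)$ is carried bijectively onto a well-ordered subset of $\R[[\omega^S]]$ of the same cardinality, which is $\le\kappa$.

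Since the argument is a direct transfer of the one given for $\kappa=\aleph_0$, no genuine obstacle arises; the text anticipates this by saying \emph{``essentially the same argument.''} The only point worth flagging is interpretational: the notion of ``type~$\kappa$'' should be read so that the length bound is the cardinal bound $|l(a)|\le\kappa$ (the natural analogue of ``$l(a)$ countable''); with that reading, the three conditions defining ``type~$\kappa$'' match up cleanly with what we verify for~$S$ and then inherit from Corollary~\ref{corc1} for $\R[[\omega^S]]$.
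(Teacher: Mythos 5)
Your proposal is correct and is exactly the argument the paper intends: the paper gives no separate proof of Corollary~\ref{corc2}, saying only that ``essentially the same argument'' as for Lemma~\ref{c2} works, namely the order-preserving injection $a\mapsto\sum_{\alpha<\mu}\widehat{a}(\alpha)\omega^{-\alpha}$ into $\R[[\omega^S]]$ combined with the $\kappa$-version of Esterle's lemma. Your interpretational worry about the clause $l(a)\le\kappa$ in the definition of ``type $\kappa$'' (which can fail literally when $\mu>\kappa$ as an ordinal) is real but harmless, since the conclusion only uses the well-ordered-subset clause, which follows directly from the Esterle remark preceding Corollary~\ref{corc1} without invoking the length bound at all.
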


\noindent
Note that for a countable $\epsilon$-number $\epsilon$ the initial set $I_{\epsilon}=\No(\epsilon)\cup \{-\epsilon\}$
is of countable type by Lemma~\ref{c2}, and hence
$\Gamma_{\epsilon}$ and $K_{\epsilon}$ are as well by
Proposition~\ref{c1}. Taking the union over all such countable
$\epsilon$ we obtain the set $\No(\omega_1)$ of all surreals of countable length as an increasing union of spherically complete
grounded $H$-sub\-fields $K_{\epsilon}$ of~$\No$. As in Section~\ref{sec:dirun} and using also the model completeness of $T_{\text{small}}^{\text{nl}}=\operatorname{Th}(\T)$ this yields Theorem~\ref{BM2}. The results above lead moreover to the following generalization:

\begin{cor}\label{corcorc2} Let $\kappa$ be any uncountable cardinal. Then
the subfield $\No(\kappa)$ of $\No$ is closed under
$\der_{\BM}$, and $\No(\kappa)\prec \No$, as ordered differential fields.
\end{cor}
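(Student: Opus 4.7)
The plan is to generalize the proof of Theorem~\ref{BM2} by exhibiting $\No(\kappa)$ as a directed union of spherically complete grounded $H$-subfields of $\No$ closed under $\der_{\BM}$, and then invoking the ADH result at the end of Section~\ref{sec:pre} together with the model completeness of $T^{\text{nl}}$. Concretely, I will establish
\[
\No(\kappa)\ =\ \bigcup_{\epsilon<\kappa} K_{\epsilon},
\]
where $\epsilon$ ranges over $\epsilon$-numbers less than $\kappa$ and $K_\epsilon$ is as in Section~\ref{sec:dirun}.

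For the inclusion $K_\epsilon\subseteq \No(\kappa)$: since $\kappa$ is a cardinal and $\epsilon<\kappa$, we have $|\epsilon|<\kappa$. The set $I_\epsilon=\No(\epsilon)\cup\{-\epsilon\}$ has every element of length cardinality $\le|\epsilon|$, and its well-ordered and reverse well-ordered subsets have cardinality $\le|\epsilon|$ by Corollary~\ref{corc2}; thus $I_\epsilon$ is of type $|\epsilon|$ in the sense of Corollary~\ref{corc1}, and two applications of that corollary yield $|l(a)|\le|\epsilon|<\kappa$ for every $a\in K_\epsilon$, hence $l(a)<\kappa$. For the reverse inclusion, given $a\in \No(\kappa)$ I recall from the proof of Lemma~\ref{3a} that for each $s\in E(a)$ and each $t\in E(s)$ one has $l(t)\le l(s)\le l(a)$. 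Let $\epsilon$ be the least $\epsilon$-number with $\epsilon>l(a)$; then $|\epsilon|=\max(\aleph_0,|l(a)|)<\kappa$, so $\epsilon<\kappa$. Every iterated exponent $t$ of $a$ satisfies $l(t)\le l(a)<\epsilon$, so $t\in\No(\epsilon)\subseteq I_\epsilon$, giving $a\in K_\epsilon$.

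Next, each $K_\epsilon$ with $\epsilon<\kappa$ is a spherically complete grounded $H$-subfield of $\No$ closed under $\der_{\BM}$ by Section~\ref{sec:dirun}, and the $K_\epsilon$ form an increasing chain (since $I_\epsilon\subseteq I_{\epsilon'}$ when $\epsilon<\epsilon'$). To apply the ADH theorem I still need $\der_{\BM}(\No(\kappa))=\No(\kappa)$: the $\epsilon$-numbers below $\kappa$ are cofinal in $\kappa$ (every $\alpha<\kappa$ is dominated by the least $\epsilon$-number above it, which has cardinality $<\kappa$), and as at the end of Section~\ref{sec:dirun} the element $(\log_\epsilon\omega)^\dagger=(\log_{\epsilon+1}\omega)'$ lies in $K_\epsilon$, realizes $\max\Psi_{K_\epsilon}$, and is a $\der_{\BM}$-image of $\log_{\epsilon+1}\omega\in \No(\kappa)$. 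The ADH theorem then yields that $\No(\kappa)$ is $\upo$-free and newtonian; together with the inherited $H$-field, small-derivation, and Liouville-closure properties (transferring as in the $\omega_1$ case), this makes $\No(\kappa)$ a model of $T^{\text{nl}}$. Model completeness of $T^{\text{nl}}$ then gives $\No(\kappa)\prec \No$ as $\mathcal{L}$-structures, and hence in particular as ordered differential fields.

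The main obstacle I anticipate is the reverse inclusion in the decomposition of $\No(\kappa)$ when $\kappa$ is singular: a naive cardinality bound would give only $<\kappa$ many second-level exponents of lengths $<\kappa$, whose supremum could conceivably reach $\kappa$ when $\mathrm{cf}(\kappa)$ is small. The pointwise bound $l(t)\le l(a)$ on iterated exponents circumvents this entirely and lets me pick a single $\epsilon<\kappa$ uniformly accommodating all of them, so no regularity assumption on $\kappa$ is required.
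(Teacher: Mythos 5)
Your argument is correct, and it uses the same machinery as the paper (the fields $K_\epsilon$ of Section~\ref{sec:dirun}, Corollaries~\ref{corc1} and~\ref{corc2}, the ADH directed-union criterion, and model completeness), but it is organized differently. The paper's proof is a two-case argument: for \emph{regular} $\kappa$ it repeats the $\omega_1$ argument verbatim with Corollaries~\ref{corc1} and~\ref{corc2} in place of Proposition~\ref{c1} and Lemma~\ref{c2}, and for \emph{singular} $\kappa$ it writes $\No(\kappa)$ as the union of the chain of elementary substructures $\No(\lambda)\prec\No$ with $\lambda<\kappa$ regular uncountable, concluding by the elementary chain theorem. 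You instead prove the single identity $\No(\kappa)=\bigcup_{\epsilon<\kappa}K_\epsilon$ for arbitrary uncountable $\kappa$, and your observation that the pointwise length bound $l(t)\le l(s)\le l(a)$ on iterated exponents (from the proof of Lemma~\ref{3a}) lets you choose one $\epsilon<\kappa$ per element is exactly what makes the case split unnecessary: the forward inclusion needs only that $\epsilon$-numbers are cofinal in $\kappa$, and the backward inclusion needs only that each $K_\epsilon$ is of type $\abs{\epsilon}<\kappa$, neither of which uses regularity; likewise the quoted ADH results carry no regularity hypothesis. So your route buys a uniform, case-free proof, while the paper's route buys a shorter write-up by leaning on the already-established regular case. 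One cosmetic caveat: you correctly read ``of type $\kappa$'' as bounding the \emph{cardinality} of lengths (the proof of Proposition~\ref{c1} yields $l(a)<\kappa^+$, not $l(a)\le\kappa$); this matches how the paper actually uses Corollary~\ref{corc1}, but it is worth stating explicitly since the literal definition in Section~\ref{sec:countable} asks for $l(a)\le\kappa$.
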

\begin{proof} If $\kappa$ is regular we can argue as for $\omega_1$, using Corollaries~\ref{corc1} and ~\ref{corc2} instead of Proposition~\ref{c1} and
Lemma~\ref{c2}. If $\kappa$ is singular, use that it is the supremum of the uncountable regular cardinals below it.
\end{proof}

\section{Constructing Embeddings}\label{sec:emb}

\noindent
So far we have just worked inside $\No$ and established Theorem~\ref{BM2}. In this section we turn to $\T$ and
prove the embedding results: Theorems~\ref{BM1} and ~\ref{BM3}.

\subsection*{Embedding $\T$ into $\No$} Given a Hahn field $\R[[G]]$ over $\R$ we define a map $F\colon \R[[G]]\to \No$ to be {\em strongly additive\/} if for every summable family $(f_i)$ in $\R[[G]]$ the family $\big(F(f_i)\big)$ is summable in $\No$ and $F\left(\sum_i f_i\right)=\sum_i F(f_i)$.
We refer to \cite[Appendix~A]{ADH} for the construction of $\T$ as an exponential ordered field. In this construction $\T$ is
a subfield of a Hahn field $\R[[G^{\text{LE}}]]$: in fact, 
$G^{\text{LE}}$ is a certain directed union of ordered subgroups $G_m{\downarrow}{}_{n}$, and $\T$ is the corresponding directed union of the Hahn fields 
$\R[[G_m{\downarrow}{}_{n}]]$. A map $F\colon\T\to \No$ is said to be {\it strongly additive}\/ if its restriction to each
$\R[[G_m{\downarrow}{}_{n}]]$ is strongly additive. 

\begin{prop} There is a unique strongly additive embedding
$\iota\colon \T \to \No$ of exponential ordered fields that is the
identity on $\R$ and such that $\iota(x_{\T})=\omega$.
\end{prop}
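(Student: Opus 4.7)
For uniqueness, $\T$ is generated from $\R$ and $x_{\T}$ by infinite summation inside its constituent Hahn subfields together with the exponential. Every $f\in\T$ lies in some $\R[[G_m{\downarrow}{}_{n}]]$ and has a Hahn expansion $f=\sum_{\fm}f_{\fm}\fm$, so strong additivity forces $\iota(f)=\sum_{\fm}f_{\fm}\iota(\fm)$; it suffices to pin down $\iota$ on the monomial group $G^{\text{LE}}$. But every $\fm\in G^{\text{LE}}$ is built from $x_{\T}$ in finitely many applications of real powers, $\exp$, and $\log$, and since $\iota$ fixes $\R$, sends $x_{\T}$ to $\omega$, and respects $\exp$, its value on each $\fm$ is thereby determined. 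For instance $\iota(\log_n x_{\T})=\log_n\omega=\lambda_{-n}$, and $\iota(\exp\phi)=\exp(\iota\phi)$ for purely infinite $\phi$ already handled.

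For existence, construct $\iota$ in parallel with the inductive construction of $\T=\bigcup_{m,n}\R[[G_m{\downarrow}{}_{n}]]$ from \cite[Appendix~A]{ADH}. The base assignment $x_{\T}^{r}\mapsto\omega^{r}$ extended by strong additivity embeds $\R[[x_{\T}^{\R}]]$ into $\R[[\omega^{\R}]]\subseteq\No$ via the Conway normal form. At each inductive step, a new monomial of $G^{\text{LE}}$ has the shape $\exp(\phi)$ with $\phi$ purely infinite in a previously constructed Hahn subfield, and we set $\iota(\exp\phi):=\exp(\iota\phi)$, which lands in $\fM$ because $\exp(\J)=\fM$ in $\No$. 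Multiplicativity on $G^{\text{LE}}$ then follows from $\exp(\phi_1+\phi_2)=\exp(\phi_1)\exp(\phi_2)$ in both fields, and strong additivity extends $\iota$ to the Hahn subfield at the new level.

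The main obstacle is to verify that $\iota|_{G^{\text{LE}}}$ is strictly order-preserving, since only then does it send reverse well-ordered subsets of $G^{\text{LE}}$ to reverse well-ordered subsets of $\fM$, and only then is the strongly additive extension well-defined on each Hahn piece. In $\T$ one has $\exp(\phi_1)<\exp(\phi_2)$ iff $\phi_1<\phi_2$, and the identical equivalence holds in $\No$ since the exponential there is strictly increasing; the monomial comparison thus reduces to comparison of purely infinite parts, and the induction closes.

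Finally, to see that the assembled map $\iota\colon\T\to\No$ is an embedding of exponential ordered fields, field operations commute with the Hahn-series extension by strong additivity, while for the exponential we decompose $f=\phi+c+\delta$ into a purely infinite $\phi$, a real $c$, and an infinitesimal $\delta$, noting that $\exp\phi$ is a monomial already handled, $\exp c\in\R$, and $\exp\delta=\sum_n\delta^n/n!$ is computed by the same strongly additive formula in both $\T$ and $\No$. Order preservation on all of $\T$ then follows from order preservation on $G^{\text{LE}}$ via leading-term comparison in the Hahn representation.
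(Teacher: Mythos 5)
Your proposal is correct and follows essentially the same route as the paper: a stage-by-stage construction of strongly additive ordered field embeddings along the inductive construction of $\T=\bigcup_{m,n}\R[[G_m{\downarrow}{}_{n}]]$, sending new monomials $\exp(\phi)$ to $\exp(\iota(\phi))\in\fM$ and checking order preservation there, with uniqueness coming from $\T$ being generated over $\R(x_{\T})$ by $\exp$, $\log$, and summation of summable families. The only point you treat more lightly than the paper is the downward ($\log_n$) direction of the union --- the paper repeats the upward construction with $\ell_n\mapsto\log_n\omega$ and observes that each $\iota_{n+1}$ extends $\iota_n$, and it records the explicit inductive invariant $\exp(i_m(a))>i_m(E_m)$ for $a\in A_m^{>}$ that underlies your ``comparison of purely infinite parts'' step --- but these are matters of bookkeeping, not of substance.
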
 
\begin{proof} We use the notations from \cite[Appendix~A]{ADH} except that the $x$ there is $x_{\T}$ here. The construction of
$\T$ there begins with the Hahn field $E_0=\R[[x_{\T}^\R]]$, and 
there is clearly a (unique) strongly additive ordered field embedding
$i_0\colon E_0 \to \No$ such that $i_0(r)=r$ and $i_0(x_{\T}^r)=\omega^r$ for all $r\in \R$. 
Moreover, $i_0(\ex^b)=\exp(i_0(b))$ for all $b\in B_0$,
and $\exp(i_0(a))> i_0(E_0)$ for all $a\in A_0^{>}$. 
Assume inductively that we have an extension of $i_0$ to a strongly additive ordered field embedding
$i_m\colon E_m=\R[[G_m]] \to \No$ such that $i_m(\ex^b)=\exp(i_m(b))$ for all $b\in B_m$, and $\exp(i_m(a))> i_m(E_m)$ for all $a\in A_m^{>}$. Then one checks easily that $i_m$ extends (uniquely) to a strongly additive ordered field embedding 
$i_{m+1}\colon E_{m+1} \to \No$ such that $i_{m+1}(\ex^b)=\exp(i_{m+1}(b))$ for all $b\in B_{m+1}$, and $\exp(i_{m+1}(a))> i_{m+1}(E_{m+1})$ for all $a\in A_{m+1}^{>}$. 
Taking a union over all $m$ we obtain an embedding 
$$\iota_0\ :=\ \bigcup_m i_m\ \colon\ \R[[x_{\T}^{\R}]]^{\text{E}}\ =\ \bigcup_m \R[[G_m]] \to \No$$ of ordered exponential fields. 
Replacing in the above $\ell_0=x_{\T}$, $G_m$, $\omega$, by 
$\ell_n=\log_n x_{\T}$, 
$G_m{\downarrow}{}_{n}$, $\log_n \omega$, respectively, we obtain likewise an embedding $$\iota_n\  \colon\ \R[[\ell_n^{\R}]]^{\text{E}}\ =\ \bigcup_m\R[[G_m{\downarrow}{}_{n}]] \to \No$$ of ordered exponential fields with 
$\iota_n(\ell_n)=\log_n \omega$. Each
$\iota_{n+1}$ extends $\iota_n$, so we can take the union
over all $n$ to get an embedding $\iota\colon \T \to \No$ as claimed. The uniqueness holds because the smallest subfield
of $\T$ that contains $\R(x_{\T})$ and is closed under exponentiation, taking logarithms of positive elements, and summation of summable families is $\T$ itself.
\end{proof}

\noindent
Next we apply the model completeness of
the theory of the exponential ordered field of real numbers 
(Wilkie~\cite{W}). By \cite{DMM} and \cite{DE}, respectively, 
the ordered exponential fields $\T$ and $\No$ are models of this theory, and so $\iota\colon \T \to \No$ is an 
elementary embedding of ordered exponential fields.

It is easy to check that $\iota\colon\T \to \No$ is also an embedding of 
ordered differential fields. In view of 
$\T\equiv \No$ (as differential fields), and the model completeness
of $T_{\text{small}}^{\text{nl}}$ mentioned
at the end of Section~\ref{sec:pre} we conclude that
$\iota$ is an elementary embedding of ordered differential fields: Theorem~\ref{BM1}.

Is $\iota$ an elementary embedding of 
{\em ordered differential exponential fields}? We don't know; this is related to the open 
problem from~\cite{ADH} to extend
the model-theoretic results there about $\T$ as a differential field
to $\T$ as a differential exponential field.

 It follows easily from the construction of $\T$ and $\iota$ that all 
surreal derivations $\der$ with $\der(\omega)=1$ agree on $\iota(\T)$. 

\begin{prop} Here are some further properties of the map $\iota$:
\begin{enumerate}
\item $\iota(G^{\operatorname{LE}})=\fM\cap \iota(\T)$;
\item $\iota(\T)$ is truncation closed;
\item $\iota(\T)$ is of countable type; in particular, 
$\iota(\T)\subseteq \No(\omega_1)$.
\end{enumerate}
\end{prop}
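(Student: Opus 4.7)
The plan is to exploit the strong additivity of $\iota$ together with the Ressayre form of transseries and the stratified construction $\T = \bigcup_{m,n} \R[[G_m\!\downarrow_n]]$ from \cite[Appendix~A]{ADH}. For (1), the forward inclusion $\iota(G^{\operatorname{LE}}) \subseteq \fM$ follows by an induction through the construction of $\iota$: each generator has the form $x_\T^r$ (mapping to $\omega^r \in \fM$) or $\ex^b$ with $b$ a purely infinite transseries (mapping to $\exp(\iota(b)) \in \exp(\J) = \fM$, using that $\iota$ is order-preserving, strongly additive, and sends infinite elements to infinite elements). For the reverse inclusion, write $f \in \T$ in Ressayre form $f = \sum_g f_g g$ with $g$ ranging over $G^{\operatorname{LE}}$ and $f_g \in \R^\times$; strong additivity together with the forward inclusion gives $\iota(f) = \sum_g f_g \iota(g)$ with distinct monomials $\iota(g) \in \fM$, and this is the exponential normal form of $\iota(f)$. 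If $\iota(f) \in \fM$, uniqueness of the normal form forces the support to be a singleton with coefficient $1$, so $f \in G^{\operatorname{LE}}$.

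For (2), I would use that $\T$ is truncation closed inside the ambient Hahn field $\R[[G^{\operatorname{LE}}]]$, which is part of its construction. Given $\iota(f) \in \iota(\T)$ and a surreal truncation $\sum_{\fm \succ \fm_0} a_\fm \fm$ of $\iota(f)$, part~(1) identifies the support of $\iota(f)$ with $\iota(\supp f) \subseteq \iota(G^{\operatorname{LE}})$; replacing $\fm_0$ by the smallest element of this support that is still $\succ \fm_0$ if necessary, we may assume $\fm_0 = \iota(g_0)$ with $g_0 \in G^{\operatorname{LE}}$. Strong additivity then identifies the given truncation with $\iota\bigl(\sum_{g \succ g_0} f_g g\bigr)$, and the inner sum lies in $\T$ by truncation-closedness.

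For (3), I would prove by induction on the layers that each $\iota(\R[[G_m\!\downarrow_n]])$ is of countable type; since $\iota(\T)$ is then a countable union of such sets, and countable unions preserve countable type, this suffices, and in particular yields $l(\iota(f)) < \omega_1$ for every $f \in \T$. The base case $\iota(\R[[x_\T^\R]]) = \R[[\omega^\R]]$ follows from Proposition~\ref{c1} with $S = \R$: every well-ordered subset of $\R$ is countable (using density of $\Q$) and each real has length $\le \omega$. For the inductive step, new exponential monomials have the form $\iota(\ex^b) = \exp(\iota(b))$ with $\iota(b)$ purely infinite and drawn from the previous layer; their Conway exponents are thus controlled by the image of the previous layer under $g$, yielding a set of countable type by the inductive hypothesis, and a further application of Proposition~\ref{c1} gives that the new Hahn layer is of countable type. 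The logarithmic layers $\R[[\ell_n^\R]]$ mapping to $\R[[(\log_n \omega)^\R]] = \R[[\omega^{\R\omega^{-n}}]]$ are handled in the same spirit. The main obstacle will be disciplined bookkeeping of the support sets as one threads through the exponential and logarithmic layers, checking the countable-type hypothesis at each stage; here Proposition~\ref{c1} and the length bound of Lemma~\ref{3} do the essential work.
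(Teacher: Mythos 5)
Your proposal is correct and follows essentially the same route as the paper: an induction through the layers $G_m$, $G_m{\downarrow}{}_n$ using $G_{m+1}=\exp(A_m)G_m$ and $\iota(A_m)\subseteq\J$ for the inclusion $\iota(G^{\operatorname{LE}})\subseteq\fM$, strong additivity and uniqueness of the normal form for the reverse inclusion and for (2) (together with truncation-closedness of $\T$ in $\R[[G^{\operatorname{LE}}]]$), and an induction with Proposition~\ref{c1} for (3). The only cosmetic difference is that the paper runs the countable-type induction on the monomial groups $\iota(G_m)$ and then passes to $\iota(\T)$, whereas you run it on the Hahn-field layers directly; both are fine.
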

\begin{proof} Induction on $m$ gives 
$\iota(G_m)\subseteq \fM$, where we use at the inductive step that
$G_{m+1} =\exp(A_{m})G_m$ and
$\iota(A_{m})\subseteq \mathbb{J}$, the latter being a consequence of
$\iota(G_m)\subseteq \fM$. Likewise, $\iota(G_m{\downarrow}{}_{n})\subseteq \fM$
for all $m, n$, and thus $\iota(G^{\operatorname{LE}})\subseteq \fM$.
Since $\iota$ respects infinite sums of monomials, this yields (1), 
and (2) is then an immediate consequence using also that $\T$ is truncation closed in $\R[[G^{\operatorname{LE}}]]$. As to (3), using the results in 
Section~\ref{sec:countable} one shows by induction on $m$ that $\iota(G_m)$, and likewise each $\iota(G_m{\downarrow}{}_{n})$, has countable type. Hence 
$\iota(G^{\operatorname{LE}})$ has countable type, and so does $\iota(\T)$. 
\end{proof}

\noindent
{\em Question} (Elliot Kaplan): can (2) be improved to $\iota(\T)$ being initial?

\subsection*{Embedding $H$-fields into $\No$} Let $\epsilon$ be an $\epsilon$-number; for example, $\epsilon$ could be any uncountable cardinal. We recall from \cite{DE} 
that $\No(\epsilon)$ is a real closed subfield of~$\No$ containing $\R$. 
We consider $\No(\epsilon)$
as a valued subfield of $\No$ with (divisible) ordered value group 
$v\big(\No(\epsilon)^\times\big)$.
We shall need an easy auxiliary result:

\begin{lemma}\label{lemsat} Let $\kappa$ be a regular uncountable cardinal. Then the underlying ordered sets of $\No(\kappa)$ and 
$v\big(\No(\kappa)^\times\big)$ are $\kappa$-saturated.  
\end{lemma}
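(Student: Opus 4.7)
The plan is to prove $\kappa$-saturation of $\No(\kappa)$ as an ordered set via a direct simplicity argument, and then to transfer it to the value group.

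For $\No(\kappa)$: recall that $\kappa$-saturation of an ordered set says that every cut $(A,B)$ with $A < B$ and $|A|, |B| < \kappa$ is filled by some element. Given such $A, B \subseteq \No(\kappa)$, the natural candidate is $x := A|B \in \No$, and the aim is to show $l(x) < \kappa$. The key observation is that by the defining property of $A|B$, no proper initial segment $x|_\alpha$ (with $\alpha < l(x)$) lies in the interval $(A,B)$, since otherwise it would be a simpler element of $(A,B)$, contradicting the simplicity of $x$. Hence for each $\alpha < l(x)$ one picks a witness $c_\alpha \in A \cup B$ with $x|_\alpha \leq c_\alpha$ in case $c_\alpha \in A$, or $x|_\alpha \geq c_\alpha$ in case $c_\alpha \in B$.

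The main technical step will be to show $|\{\alpha < l(x) : c_\alpha = c\}| \leq |l(c)|+1$ for each $c \in A \cup B$. Say $c \in A$, so $x > c$ and $x|_\alpha \leq c$; let $\gamma$ be the first position where the sign sequences of $x$ and $c$ (extended by $0$'s beyond their lengths) disagree, so $x(\gamma) > c(\gamma)$. Then (i) $\alpha \leq \gamma$, for otherwise $x|_\alpha$ inherits the disagreement at $\gamma < \alpha$ and one gets $x|_\alpha > c$, contradicting $x|_\alpha \leq c$; and (ii) $\gamma \leq l(c)$, because $x$ carries a nonzero sign at every position below $l(x)$ whereas $c$ has $0$ at position $l(c)$. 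Together these give $\alpha \leq l(c)$, whence the bound. Summing over $c \in A \cup B$, regularity of $\kappa$ together with $|A \cup B| < \kappa$ and $l(c) < \kappa$ for all $c$ yields $|l(x)| < \kappa$, so $l(x) < \kappa$ and $x \in \No(\kappa)$.

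For $v(\No(\kappa)^\times)$: since $\kappa$ is an $\epsilon$-number, the bound $l(\omega^y) \leq \omega^{l(y)}$ from \cite{DE} gives $\omega^y \in \No(\kappa)$ whenever $y \in \No(\kappa)$, and together with the fact that supports of elements of $\No(\kappa)$ lie in $\No(\kappa)$, the map $y \mapsto v(\omega^y)$ is a surjective order-reversing group isomorphism $\No(\kappa) \to v(\No(\kappa)^\times)$. Since $\kappa$-saturation of an ordered set is invariant under order reversal, the saturation of $v(\No(\kappa)^\times)$ follows from that of $\No(\kappa)$. The main obstacle will be the careful case analysis of sign sequences needed to verify (i) and (ii); this is where regularity of $\kappa$ and the hypotheses on the cardinalities of $A$ and $B$ are ultimately used.
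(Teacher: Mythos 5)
Your proof is correct, but both halves take a genuinely different route from the paper. For the saturation of $\No(\kappa)$, the paper does not analyze sign sequences at all: it uses regularity of $\kappa$ once, to get a single ordinal $\alpha<\kappa$ bounding the lengths of all elements of $A\cup B$, and then invokes Gonshor's Theorem~2.3, which directly gives a surreal $a$ with $A<a<B$ and $l(a)\le\alpha$. Your counting argument --- assigning to each $\alpha<l(x)$ a witness $c_\alpha\in A\cup B$ excluding $x|_\alpha$ from $(A,B)$, and bounding the fiber over each $c$ by $|l(c)|+1$ via the first position of disagreement --- is in effect a self-contained reproof of (a cardinality refinement of) that theorem of Gonshor; it checks out, including the edge cases, at the cost of more work. (One quibble: regularity is used in the summation over $A\cup B$, not in verifying (i) and (ii) as your last sentence suggests.) For the value group, the paper again reduces to the first half: given $v(P)>v(Q)$ it forms the multiplicative cut $A=\{np\}<B=\{q/n\}$ in $\No(\kappa)$, fills it, and takes the valuation of the filling element; this needs no description of the value group. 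You instead exhibit $v\big(\No(\kappa)^\times\big)$ as order-anti-isomorphic to $\No(\kappa)$ via $y\mapsto v(\omega^y)$, which is clean and structural but leans on $\kappa$ being an $\epsilon$-number together with the length bounds $l(y)\le l(\omega^y)\le l(a)$ and $l(\omega^y)\le\omega^{l(y)}$ from van den Dries--Ehrlich to get well-definedness and surjectivity; these are facts the paper uses elsewhere, so this is legitimate. Each version buys something: the paper's is shorter modulo the cited results, yours is more self-contained and makes the value group explicit.
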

\begin{proof} Let $A, B\subseteq \No(\kappa)$ have cardinality $<\kappa$, with $A < B$. The regularity of $\kappa$ yields an ordinal $\alpha < \kappa$
such that $l(A\cup B) <\alpha$. By \cite[Theorem 2.3]{G} this gives a surreal $a$ with $l(a)\le \alpha$ such that $A < a < B$, and then 
$a\in \No(\kappa)$. Thus $\No(\kappa)$ is $\kappa$-saturated
as an ordered set. Next, let $P, Q\subseteq \No(\kappa)^{>}$ have cardinality
$< \kappa$, with $v(P)> v(Q)$. Set $A:= \{np:\ n\ge 1,\ p\in P\}$ and
$B:=\{q/n:\ n\ge 1,\ q\in Q\}$. Then $A<B$, and so the above gives $a\in \No(\kappa)$ with $A< a < B$. Then $v(P)> v(a) > v(Q)$, showing that $v\big(\No(\kappa)^\times\big)$ is $\kappa$-saturated as an ordered set. 
\end{proof}

\noindent
For Theorem~\ref{BM3} we need a sharpening of the model completeness of 
the theory $T^{\text{nl}}$ of $\upo$-free newtonian Liouville closed $H$-fields, 
namely, the quantifier elimination (QE) explained in  
~\cite[Introduction to Chapter 16]{ADH}.
The relevant first-order language for QE has in addition to $\mathcal{L}$ 
extra unary predicate symbols $\operatorname{I}, \Upl, \Upo$, to be interpreted 
in a model $L$ of $T^{\text{nl}}$ as sets $\I(L), \Upl(L), \Upo(L)\subseteq L$ according to their defining axioms:
\begin{align*}
\I(a)\ &\Longleftrightarrow\ a=y' \text{ for some $y\prec 1$  in $L$,}\\
\Upl(a)\ &\Longleftrightarrow\ a=-y^{\dagger\dagger} \text{ for some $y\succ 1$  in $L$,}\\
\Upo(a)\  &\Longleftrightarrow\ 4y''+ay=0 \text{ for some $y\in L^\times$.}
\end{align*}
The sets $\I(L), \Upl(L), \Upo(L)\subseteq L$ are convex; their role with respect to QE is
like that of the set of squares in a real closed field. For more on this, see \cite[Introduction]{ADH}. 
A {\em $\Upl\Upo$-field\/} is a substructure 
$\mathbf{K}=(K, I, \Lambda, \Omega)$ of such an
expanded model $(L,\dots)$ of $T^{\text{nl}}$ for which 
$K$ is an $H$-subfield of $L$. This notion of a $ \Upl\Upo$-field 
is studied in detail in \cite[Section 16.3]{ADH}, from which we
take in particular the fact that any $\upo$-free
$H$-field $K$ has a unique expansion to a $\Upl\Upo$-field 
$\mathbf{K}=(K, I, \Lambda, \Omega)$. The proof below assumes familiarity 
with several other results from  \cite[Section 16.3]{ADH}.

\begin{proof}[Proof of Theorem~\ref{BM3}]
Let $\No_{\Upl\Upo}$ be the expansion of $\No$ to a $\Upl\Upo$-field, and let 
$K$ be any $H$-field
with small derivation and constant field $\R$. In order to embed $K$ 
over~$\R$ into 
$\No$, we first expand $K$ to a $\Upl\Upo$-field 
$\mathbf{K}=(K, I, \Lambda, \Omega)$ with $1\notin I$; this can be done in at least one way, and at most two ways, and $1\notin I$ guarantees that all $\Upl\Upo$-field extensions of $\mathbf{K}$ have small derivation. We claim that $\mathbf{K}$ can be embedded into $\No_{\Upl\Upo}$. The ordered field $\R$ with the trivial derivation is an $H$-field and expands to the
$\Upl\Upo$-field $\mathbf{R}:=\big(\R, \{0\}, (-\infty, 0], (-\infty,0]\big)$. The inclusion of $\R$ into
$K$ and into $\No$ are embeddings of $\mathbf{R}$ into
$\mathbf{K}$ and 
$\No_{\Upl\Upo}$, respectively. By taking $\mathbf{E}:=\mathbf{R}$, our claim reduces therefore to proving the following more general statement:

\medskip\noindent
{\em Claim}. Let $\mathbf{E}\subseteq \mathbf{K}$ be an extension of $\Upl\Upo$-fields with $\R$ as their common constant field, and let $i\colon \mathbf{E}\to \No_{\Upl\Upo}$ be an embedding of $\Upl\Upo$-fields that is the identity on~$\R$. Then $i$ extends to an embedding $\mathbf{K}\to \No_{\Upl\Upo}$ of $\Upl\Upo$-fields.

\medskip\noindent
To prove this we first extend $\mathbf{K}$ to make it $\upo$-free, newtonian, 
and Liouville closed; by \cite[16.4.1 and 14.5.10]{ADH} this can be done 
without changing its constant field. Next we apply \cite[16.4.1]{ADH} again,
but this time to $\mathbf{E}$, to arrange  
that $\mathbf{E}$ is $\upo$-free. Take a regular 
uncountable cardinal 
$\kappa> \card(K)$ such that $i(E)\subseteq \No(\kappa)$, where
$E$ is the underlying set of $\mathbf{E}$. By Corollary~\ref{corcorc2} we 
have $\No(\kappa)\prec \No$. In view of 
Lemma~\ref{lemsat} and \cite[16.2.3]{ADH} we can then extend $i$ 
to an embedding $K \to \No(\kappa)$. \end{proof}

\subsection*{Final remarks} Suppose the $H$-field $K$ has small derivation
and constant field $\R$. Then Theorem~\ref{BM3} yields an embedding 
$i\colon K \to \No$ over $\R$. Under some reasonable further conditions, 
like $K$ being $\upo$-free and newtonian, can we take $i$ such that
$i(K)$ is truncation closed, or even initial? The interest of such
a result would depend on how canonical the derivation $\der_{\BM}$ is deemed to
be. As already mentioned at the end of the introduction, we doubt that
$\der_{\BM}$ is optimal: the condition on pre-derivations to take
values in $\R^{>}\fM$ seems too narrow. But even with this restriction one can
construct pre-derivations $D\ne \der{_\fA}$ such that Theorems~\ref{BM1} and~\ref{BM3} 
go through for $\No$ equipped with $\der_D$ instead of with $\der_{\BM}$, 
with only minor changes in the proofs.


\begin{thebibliography}{11}

\bibitem{AD} M. Aschenbrenner, L. van den Dries, {\it H-fields and their Liouville extensions,} Math. Z. {\bf 242} (2002), 543--588. 

\bibitem{ADH} M. Aschenbrenner, L. van den Dries, J. van der Hoeven,
{\it Asymptotic Differential Algebra and Model Theory of Transseries,} {\tt arXiv:1509.02588}, Ann. of Math. Stud., to appear.

\bibitem{BM}
A. Berarducci, V. Mantova, {\it Surreal numbers, derivations, and transseries,} J. Eur. Math. Soc. (JEMS), to appear.

\bibitem{C} J. Conway, {\it On Numbers and Games,} London Mathematical Society Monographs, vol. 6, Academic Press, London, 1976.

\bibitem{DE} L. van den Dries and P. Ehrlich, {\it Fields of surreal numbers and exponentiation,} Fund. Math. {\bf 167} (2001), 173--188, and {\it Erratum,} Fund. Math. {\bf 168} (2001), 295--297.

\bibitem{DMM} L. van den Dries, A. Macintyre, and D. Marker, {\it Logarithmic-exponential power series,} J. London Math. Soc. {\bf 56} (1997), 417--434. 

\bibitem{Eh} P. Ehrlich, {\it Number systems with simplicity hierarchies: a generalization of Conway's theory of surreal numbers,} J. Symbolic Logic {\bf 66}, (2001), 1231--1258.

\bibitem{E} J. Esterle, {\it Solution d'un probl\`eme d'Erd\"{o}s, Gillman et Henriksen et application \`{a} l'\'{e}tude des homomorphismes de $C(K)$,} Acta Math. (Hungarica) {\bf 30} (1977), 113--127.


\bibitem{G}
H.~Gonshor,
  {\it An Introduction to the Theory of Surreal Numbers,}
London Mathematical Society Lecture Note Series, vol. 110, Cambridge University Press, Cambridge, 1986. 

\bibitem{vdH:phd} J.~van der Hoeven, {\it Asymptotique Automatique}, Ph.~D.~thesis, \'Ecole Polytechnique,  1997.

\bibitem{KM} S. Kuhlmann and M. Matusinski, {\it The exponential-logarithmic equivalence classes of surreal numbers,} Order {\bf 32} (2015), 53--68.

\bibitem{S} M.~C.~Schmeling, {\it Corps de Transs{\'e}ries}, Ph.~D.~thesis, Universit{\'e} Paris-VII,  2001.

\bibitem{W} A. Wilkie, {\it Some model completeness results for expansions of the ordered field of real numbers by Pfaffian functions and the exponential function,} J. Amer. Math. Soc. {\bf 9} (1996), 1051--1094.



\end{thebibliography}
\end{document}